\def\R{\mathbb R}
\def\Z{\mathbb Z}
\newcommand{\ds}{\displaystyle}
\newcommand{\Ll}{\mathcal{L}}
\newcommand{\p}{\partial}
\newcommand{\tpsi}{\tilde{\psi}}
\newtheorem{theorem}{Theorem}[section]
\newtheorem{lemma}[theorem]{Lemma}
\newtheorem{proposition}[theorem]{Proposition}
\newtheorem{remark}[theorem]{Remark}
\definecolor{aquamarine}{rgb}{0.13, 0.68, 0.8}
\newcommand{\pth}[1]{\left(#1\right)}
\newcommand{\cro}[1]{\left[#1\right]}
\newcommand{\acc}[1]{\left\{#1\right\}}
\newcommand{\abs}[1]{\left|#1\right|}
\newcommand{\dabs}[1]{\left\|#1\right\|}
\newcommand{\scal}[1]{\left<#1\right>}
\newcommand{\de}{\,\mathrm{d}}
\newcommand{\dr}{\partial}
\newcommand{\esp}{\hspace{1cm}}
\newcommand{\eps}{\varepsilon}
\newcommand{\drift}{P}
\newcommand{\mca}{\mathcal{A}}
\newcommand{\mce}{\mathcal{E}}
\newcommand{\mcl}{\mathcal{L}}
\newcommand{\mcc}{\mathcal{C}}
\newcommand{\mct}{\mathcal{T}}
\newcommand{\mcm}{\mathcal{M}}
\begin{document}

\title{Impact of diffusion mechanisms on persistence and spreading}

\author{N. Boutillon$^{\footnotesize\hbox{a,b}}$, Y.-J. Kim$^{\footnotesize\hbox{c}}$, L. Roques$^{\footnotesize\hbox{a}}$\\
\\
\footnotesize{$^{\hbox{a }}$INRAE, BioSP, 84914, Avignon, France}\\
\footnotesize{$^{\hbox{b }}$Aix Marseille Univ, CNRS, I2M, Marseille, France}\\
\footnotesize{$^{\hbox{c }}$Department of Mathematical Sciences, KAIST,} \\
\footnotesize{291 Daehak-ro, Yuseong-gu, Daejeon,
34141, Korea}
}

\date{}

\maketitle

\begin{abstract}

   We examine a generalized KPP equation with a \enquote{$q$-diffusion}, which is a framework that unifies various standard linear diffusion regimes: Fickian diffusion ($q = 0$), Stratonovich diffusion ($q = 1/2$), Fokker-Planck diffusion ($q = 1$), and nonstandard diffusion regimes for general $q\in\R$. Using both analytical methods and numerical simulations, we explore how the ability of persistence (measured by some principal eigenvalue) and how the asymptotic spreading speed depend on the parameter $q$ and on the phase shift between the growth rate $r(x)$ and the diffusion coefficient $D(x)$.

Our results demonstrate that persistence and spreading properties generally depend on $q$:  for example, appropriate configurations of $r(x)$ and $D(x)$ can be constructed such that $q$-diffusion either enhances or diminishes the ability of persistence and the spreading speed with respect to the traditional Fickian diffusion.
We
show that the spatial arrangement of $r(x)$ with respect to $D(x)$ has markedly different effects depending on whether $q > 0$, $q = 0$, or $q < 0$. The case where $r$ is constant is an exception: persistence becomes independent of $q$, while the spreading speed displays a symmetry around $q = 1/2$. 

This work underscores the importance of carefully selecting diffusion models in ecological and epidemiological contexts, highlighting their potential implications for persistence, spreading, and control strategies. \\

\noindent{\underline{Keywords:} Fickian diffusion; Fokker-Planck diffusion; Stratonovich; KPP equation; Spreading speed; Phase shift}\\

\noindent{\underline{AMS Subject Classifications:} 35K57, 35B40, 92D25, 60J70, 35Q92.}

\end{abstract}


\section{Introduction}

We consider the following reaction-diffusion equations, indexed by a parameter $q\in\R$:
\begin{equation} 
    \label{eq:main_KPP}
    \p_t u(t,x) = \p_x \left( D(x)^{1-q} \p_x(D(x)^q u)\right) + f(x,u), \quad t > 0, \ x \in \mathbb{R}.
\end{equation}
Here, the function $x\mapsto D(x)$ is positive and $1$-periodic, the function $x\mapsto \dr_uf(x,0)$ is $1-$periodic, and $f$ satisfies the KPP condition (we will give the precise definition below).
Equivalently, \eqref{eq:main_KPP} can be written:
\begin{equation} 
    \label{eq:main_KPP_bis}
  \p_t u(t,x) = \p_x \left( D(x) \p_x u\right) + q \p_x \left( D'(x)  u \right) + f(x,u), \quad t > 0, \ x \in \mathbb{R}.
\end{equation}
Our primary goal is to analyze the persistence and spreading properties of the solutions $u(t,x)$ for varying values of $q \in \R$.
In Equation~\eqref{eq:main_KPP}, the term $\p_x \left( D(x)^{1-q} \p_x(D(x)^q u)\right)$ can be referred to as the \emph{``$q-$diffusion operator"}, and appears in several recent papers \cite{AlfGil22,HilKan24,KimLee24,NinNak24}.
Let us describe why $q$-diffusion operators have recently raised so much interest.

Since Einstein's seminal work on Brownian motion \cite{Ein05}, the parabolic partial differential equation (PDE)
$\partial_t u(t,x) = D \partial_{xx} u(t,x)$
with constant diffusion coefficient $D$ has been recognized for its solid physical underpinnings:
the solution $u(t,x)$ is typically interpreted as the density of particles undergoing independent Brownian motions.
From this starting point, there are several ways to introduce heterogeneity in the movement of particles.

A first way to introduce heterogeneity in the movement of particles in~\eqref{eq:main_KPP} is to use the Fokker-Planck equation associated with an Itô stochastic differential equation (SDE), formulated as 
\begin{equation}\label{eq:ito_sde}
\de X_t =  \sigma(X_t) \, \de W_t.
\end{equation}
Here, $\de W_t$ denotes a forward increment in time of a standard Brownian motion. 
The SDE~\eqref{eq:ito_sde} modifies Brownian motion by incorporating a spatio-temporal variability in the movements of particles, as indicated by the position-dependent random driving force $\sigma(X_t)$.
For instance, $\sigma$ takes lower values in regions where the motion of particles is slower.
The associated Fokker-Planck equation describes the dynamics of the transition probability density $u(t,x)=P(X(t) = x | X(0) = x_0)$ (see e.g.\ the textbook \cite{Gar09}): 
\begin{align}
\partial_t u(t,x) = \partial_{xx} (D(x) \, u), \quad t > 0, \quad x \in \mathbb{R},
\end{align}
with Dirac initial condition at $x_0$,  where $D(x)=\sigma^2(x)/2$ and $\partial_{xx} (D(x) \, u)$ corresponds to the diffusion term in \eqref{eq:main_KPP} with $q=1$. The $1$-diffusion will be called, throughout this paper, the \emph{Fokker-Planck diffusion}.

A second standard way to introduce heterogeneity in the movement of particles is to use Fickian diffusion~\cite{Fic55}.
Fickian diffusion is commonly derived from the physical interplay between particle flux, concentration, and concentration gradient~\cite{Cus09,Far93}.
Particles naturally migrate from regions of higher density to regions of lower density, so this diffusion eventually leads to a homogeneous density of particles. 
More precisely, Fickian diffusion is derived from Fick's Law, which posits that the particle flux is proportional to the concentration gradient. In this context, the proportionality coefficient $D(x)$ represents the rate at which particles diffuse through a medium, accounting for spatial variations in the medium properties. 
This construction leads to the following PDE:
\begin{align}
\partial_t u(t,x) = \partial_{x} (D(x) \, \p_x u), \quad t > 0, \quad x \in \mathbb{R},
\end{align}
which corresponds to the diffusion term in \eqref{eq:main_KPP} with $q=0$. The $0$-diffusion will be called, throughout this paper, the \emph{Fickian diffusion}.



Third, Wereide \cite{Wer14} formulated a diffusion law corresponding to the case where $q = 1/2$. This law incorporates the phenomenon of \textit{thermo-diffusion}, where particles migrate not only in response to a concentration gradient but also under the influence of a temperature gradient. Considering the SDE \eqref{eq:ito_sde} but with Stratonovich integration instead of Itô integration, the density $u(t,\cdot)$ of the law of the particle at time~$t$ satisfies
\begin{align}
\partial_t u(t,x) = \partial_{x} \pth{\sqrt{D(x)}\partial_x\pth{\sqrt{D(x)} \, u}}, \quad t > 0, \quad x \in \mathbb{R},
\end{align}
where $D(x)=\sigma^2(x)/2$ corresponds to the diffusion term in \eqref{eq:main_KPP} with $q=1/2$. The $1/2$-diffusion will be called the \emph{Stratonovich diffusion}.
More recently, Kim and Seo \cite{KimSeo21} developed heterogeneous diffusion equations from reversible kinetic systems and compared various diffusion laws through a theoretical experiment. Notably, they derived the Stratonovich diffusion law as a particular instance of their model.

\medskip

The Fokker-Planck, Fickian and Stratonovich diffusion laws can also emerge as limits of stochastic processes.
The Itô integral interpretation of the equation $\de X_t =  \sigma(X_t) \, \de W_t$ assumes that the jump $X_{t+\delta t}-X_t$ 
(for a time step $\delta t$ going to $0$)
depends on the intensity $\sigma(X_t)$ of the stochastic process at the departure point $X_t$.  
This interpretation leads to the Fokker-Planck diffusion. 
The analysis in~\cite{Vis08} indicates that adopting an alternative assumption, where the length of the jump depends on the value of~$\sigma$ at an intermediate point $q X_t + (1-q) X_{t+\delta t}$ (for $q \in \R$), leads to the diffusion term $\partial_x \left( D(x)^{1-q} \partial_x(D(x)^q u)\right)$. 
More precisely, $X_t$ being defined, $X_{t+\delta t}$ is implicitly defined, for $\delta t$ small enough, by
\[X_{t+\delta t}-X_t=\sigma\pth{q X_t + (1-q) X_{t+\delta t}}(W_{t+\delta t}-W_t).\]
Then, Fickian diffusion ($q=0$) is characterized by a dependency on the arrival point and Stratonovich diffusion ($q=1/2$) is characterized by a dependency on the midpoint.
 Alternative constructions of $q$-diffusions have been explored in~\cite{Oku86}, relying upon jump rates instead of jump lengths, and more recently in~\cite{AlfGil22}, where a combination of variable jump rates and jump lengths has been investigated.
 In~\cite{PotSch14}, the authors focused particularly on the cases $q=1$ (Fokker-Planck diffusion) and $q=-1$. The latter is interpreted as \emph{attractive dispersal}, i.e. the probability of jumping to one's neighbor depends on the environment at that neighbor.
We emphasize that the derivations in these works rely on different factors of random walk systems (length of jumps, waiting time, probability of jumping). The reference point at which this factor is considered (middle point, end point, etc.) depends not only on~$q$ but also on the original probabilistic model.

\paragraph{Problematic of this work.}
In the field of reaction-diffusion equations, both Fokker-Planck and Fickian diffusion operators are commonly encountered to model heterogeneous diffusions (for instance \cite{BenFeu14,CanCos03,KinKaw06,Shikaw97} use Fickian diffusion and \cite{Okulev02,Pat53} use Fokker-Planck diffusion). There appears to be a preference for the use of Fickian diffusion $\partial_x(D(x)\partial_x u)$, due to its self-adjoint nature, which facilitates mathematical computations.

The above presentation indicates that $\partial_{xx}\!\big(D(x)u\big)$ the Fokker-Planck  operator 
 should be more relevant for the description of Brownian-like individual movements when individuals move
differently based on the local information that is accessible to them. 
Several authors advocate for the use of the Fokker-Planck diffusion in biological systems \cite{HanEkb01,PotSch14,RoqAug08,Tur98,Vis08}.
In contrast, the Fickian diffusion operator is thought to be more suitable for passive physical phenomena, such as the dilution of a dye in a liquid or the diffusion of heat in a heterogeneous medium. 
As scientific authors in mathematical ecology often seem to use them interchangeably, depending on the technical constraints related to mathematical proofs or based on the modeling works to which they refer, it seems important to understand the concrete effect related to this choice. 
For example, as underscored by \cite{BenMal16} (refer also to \cite{Roq13}), it is well-established that the formation of patterns in spatially inhomogeneous diffusive systems is significantly more pronounced when diffusion follows the Fokker-Planck law, as opposed to Fick's law.

Reaction-diffusion models predominantly aim at examining the persistence or extinction of populations and their spatial spreading dynamics. In this context, the main goal of our work is to analyze the persistence and spreading dynamics of the model in Equation~\eqref{eq:main_KPP}, with a focus on the parameter $q$. Specifically, we investigate whether the assumption of a Fickian or Fokker-Planck diffusion significantly affects the outcomes.

\paragraph{Layout.} 
In Section~\ref{s:standard}, we give our general assumptions and standard properties of equations of the form~\eqref{eq:main_KPP} under a KPP assumption.
Next, in Section~\ref{s:results}, we give our main results, along with the main ideas of proofs.
We add an analysis of some numerical results.
We discuss our results in Section~\ref{s:discussion}.
Last, Section~\ref{s:proofs} is devoted to the proofs.

\section{Main assumptions and standard properties}\label{s:standard}

\subsection{Assumptions}

The diffusion coefficient $D(x)$ is assumed to be positive, 1-periodic and of class $\mcc^{2,\alpha}$ (with $\alpha > 0$).
The function $f: \mathbb{R} \times \mathbb{R}_+ \to \mathbb{R}$ is assumed to be of class $\mcc^{1,\alpha}$ with respect to $(x,u)$ and $\mcc^2$ with respect to $u$. We also assume that $f$ is 1-periodic with respect to~$x$ and that $f(x,0) = 0$ for all $x \in \mathbb{R}$. We define
\begin{equation*}
    r(x) := \left. \frac{\partial}{\partial u} f(x, u) \right|_{u=0}.
\end{equation*}
When $u$ represents a population density, the coefficient $r(x)$ is often interpreted as the intrinsic growth rate of the population.

Furthermore, throughout the paper, we assume the following, which is a (slightly stronger) variant of the KPP assumption:
\begin{equation}\label{f1}
\forall x \in \mathbb{R},\ u \mapsto \frac{f(x,u)}{u} \text{ is decreasing in } u > 0
\end{equation}
and, for all $q\in\R$,
\begin{equation}\label{f2}
\exists M_q \ge 0,\ \forall u \ge M_q,\ \forall x \in \mathbb{R},\ f(x,u) + q D''(x) u \le 0.
\end{equation}
These assumptions, along with the instability assumption~\eqref{eq:persist_cond} below, ensure the existence and uniqueness of a positive stationary state to~\eqref{eq:main_KPP}, i.e. a periodic solution $p>0$ to
\[\dr_x(D^{1-q}\dr_x(D^qp))+f(x,p)=0, \qquad x\in\R,\]
see~\cite{BHR07}.
Examples of functions $f$ satisfying (\ref{f1}-\ref{f2}) for all $q\in\R$ include $f(x,u) = r \, u (1-u)$ or $f(x,u) = u(r(x)-\gamma(x)u)$, where $r$ and $\gamma$ are $\mcc^{1,\alpha}$ periodic functions.
The regularity assumptions on $r$ and $\gamma$ (and thus on $f$) could be relaxed, for example, to $L^\infty$, but it would require us to deal with Sobolev spaces $W^{2,p}$ rather than Hölder spaces $\mcc^{2,\alpha}$.

\subsection{Standard persistence and spreading properties}\label{ss:standard_prop}

For $q\in\R$, we let $\mcl_q^0$ be the operator defined by
\[\mcl_q^0\psi:= \dr_x(D^{1-q}\dr_x(D^q\psi))+r\psi.\]
For $\lambda\in\R$, we define the operator $\mathcal{L}_{q}^{\lambda}$ as follows:
\begin{equation*}
  \mcl_q^{\lambda}\psi(x):=e^{\lambda x}\mcl_q^0\pth{y\mapsto e^{-\lambda y}\psi(y)}.
\end{equation*}
More explicitly, we have:
\begin{equation*}
    \mathcal{L}_q^\lambda\, : \, \psi \mapsto D \psi'' + \left( (1+q) D' - 2 \lambda D \right) \psi' + \left( q D'' + \lambda^2 D - (1+q) \lambda D' + r \right)\psi.
\end{equation*}
We define the principal eigenvalue $k^{\lambda}_q$ of $\mcl_q^{\lambda}$ as the unique real number for which there exists a $1-$periodic function $\varphi > 0$ satisfying
\[\mathcal{L}_q^\lambda \varphi = k_q^{\lambda}\varphi.\]
For each $q,\lambda\in\R$, the existence and uniqueness of the pair $(\varphi,k_q^{\lambda})$ (up to multiplication of $\varphi$ by a constant) is guaranteed by the Krein-Rutman theory.
For the sake of explicitness, we will usually use the notations $\mcl_q^{\lambda}[ r; D ]$ and $k_q^{\lambda}[r;D]$ to insist on the dependency on the coefficients $r$ and $D$.

We now state two important properties of the principal eigenvalues $k_q^{\lambda}[r;D]$ ($\lambda\in\R$) to justify why their properties are crucial to the study of the solutions of Equation \eqref{eq:main_KPP}.


First, we focus on the {persistence} of the population.
By \enquote{persistence}, we will refer to the property that starting from any bounded and continuous function~$u_0$ satisfying $u_0 \geq 0$ and $u_0 \not\equiv 0$, the solution $u(t,x)$ to the Cauchy problem \eqref{eq:main_KPP} with initial condition $u(0, \cdot) = u_0$ satisfies
\[\liminf_{t\to+\infty}\sup_{x\in\R}u(t,x)>0.\]
Persistence holds if, and only if
\begin{equation} \label{eq:persist_cond}
        k^{0}_q[r;D] > 0,
\end{equation}
see~\cite{BerHamRoq05a} for $q=0$, and see~\cite[Appendix A]{BouHamRoq25} for general~$q$.
Under condition~\eqref{eq:persist_cond}, assumptions~\eqref{f1}-\eqref{f2} ensure that the solution converges in large time to the unique positive stationary solution $p$. 
On the other hand, if $k^0_q[r;D]\leq 0$, then, starting from any bounded continuous and nonnegative initial condition $u_0$ with compact support, the solution converges uniformly to $0$.
This means, from the point of view of modeling, that the population gets extinct.

In this article, we call $k_q^0[r;D]$ the \enquote{ability of persistence} for the parameters $q$, $r$ and~$D$.
This terminology comes from the fact that a population described by Equation~\eqref{eq:main_KPP} is more resilient to constant perturbations of its growth rate when $k_q^0[r;D]$ is higher. Specifically, if $\kappa>0$ is a constant, then $k_q^0[r-\kappa;D]=k_q^0[r;D]-\kappa$ becomes negative if  $k_q^0[r;D]<\kappa$.  
Consequently, a higher value of $k_q^0[r;D]$ increases the likelihood of persistence under a constant random perturbation of~$r$.

Second, let us define the spreading speed (to the right) $c^*_q[r;D]$ by:
\begin{equation}\label{eq:garfre}
        c^*_q[r;D] = \inf_{\lambda > 0} \frac{k_q^{\lambda}[r;D]}{\lambda}>0.
\end{equation}
Equation~\eqref{eq:garfre} is called the \emph{Freidlin-Gärtner formula} for the spreading speed.
If  \allowbreak $c^*_q[r;D] \!>\! 0$, we say that the population spreads to the right. If $u_0 \not \equiv 0$  is continuous, nonnegative and has a compact support, then for each $ w \in(0, c^*_q[r;D])$,
\[
\liminf_{t \to +\infty} u(t, wt) > 0,
\]
and for each $ w > c^*_q[r;D]$ ,
\[
\lim_{t \to +\infty} u(t, wt) = 0.
\]
This means that if an observer moves to the right at a speed larger than $ c^*_q[r;D]$, they will see the solution decay to $0$, while if the observer travels at a speed smaller than $ c^*_q[r;D]$, they will observe a density that remains bounded from below by a positive constant.
More precisely, the solution $u$ converges locally uniformly to the unique stationary state $p$ of  Equation~\eqref{eq:main_KPP}.

We know from \cite{BerHamRoq05b,Nad09a} that $\lambda \mapsto k^{\lambda}_0[r;D]$ is strictly convex and reaches its minimum at $\lambda=0.$ Using Proposition~\ref{prop:compar_k} below, we will see that $\lambda \mapsto k^{\lambda}_q[r;D]$ has the same properties for all $q\in\R$.
Thus, if the condition for persistence is satisfied, the condition  for spreading is also satisfied. Therefore, we have the following well-known, yet significant, equivalence:
\begin{center}
  \emph{The condition for persistence $k^{0}_q[r;D] > 0$ is equivalent to \\
  the condition for spreading  $c_q^*[r;D]>0$.}
\end{center}

In our context, the existence of a spreading speed and the formula \eqref{eq:garfre} follow from~\cite{Berhamnad08,BerHamNad05d, Wei02}.
This framework was originally proposed in the work by Gärtner and Freidlin~\cite{GarFre79} when $q=0$.

Finally, let us mention that the spreading speed to the left can be defined as well, and the same type of Freidlin-Gärtner formula holds \cite{Berhamnad08,BerHamNad05d}:
\begin{equation}\label{eq:garfre_left}
c^{*,\,{left}}_{q}[r;D]= \inf_{\lambda>0}\frac{k_q^{-\lambda}[r;D]}{\lambda}.  
\end{equation}
In our context, the spreading speed to the left and the spreading speed to the right coincide (see Proposition~\ref{prop:left_right}). This is due to the fact that the drift term in~\eqref{eq:main_KPP_bis} has zero integral on a period.

\section{Results}\label{s:results}

We now state our results.
We first prove that in general, the choice of the parameter~$q$ of the $q$-diffusion operator has an influence on the ability of persistence and the spreading speed.
Second, we consider a constant reaction term~$r$; then the ability of persistence does not depend on~$q$.
Regarding the spreading speed, we prove that there is a symmetry around the Stratonovich diffusion. In particular, the Fickian diffusion and the Fokker-Planck diffusion are symmetric with respect to the Stratonovich diffusion, and thus give the same spreading speed.
Third, we consider general~$r$ again and we give monotonicity and non-monotonicity properties of the spreading speed with respect to the parameter~$q$.
Last, we provide observations from numerical simulations.

Throughout this section, we work under the assumption that $r\in\mcc^{0,\alpha}(\R)$ is $1-$\allowbreak periodic and $D\in\mcc^{2,\alpha}(\R)$ is $1-$periodic and positive.

\subsection{Different behaviors with respect to \texorpdfstring{$q$}{q}}

In our first result, we show that the choice of the value of $q$ may indeed have an impact on the ability of persistence and on the spreading speed.
In particular, through appropriate choices of coefficients $r$ and $D$, the $q$-diffusion operator can give rise to a higher or lower ability of persistence and spreading speed with respect to Fickian diffusion ($q = 0$).

\begin{theorem}\label{thm:choose_r_d}
  Consider a nonconstant $D$ and let $q\in \R \setminus \{0\}.$
  \begin{enumerate}
      \item[(i)] There exists $r$ for which $k_q^0[r;D] < k_0^0[r;D]$ and $c_q^*[r;D]<c_0^*[r;D]$;
      \item[(ii)] There exists $r$ for which $k_q^0[r;D] > k_0^0[r;D]$ and $c_q^*[r;D]>c_0^*[r;D]$.
  \end{enumerate}
  Consider a nonconstant $r$ and let $q\in \R \setminus \{0\}.$
  \begin{enumerate}
      \item[(iii)] There exists $D$ for which $k_q^0[r;D] < k_0^0[r;D]$; 
        \item[(iv)] There exists $D$ for which $k_q^0[r;D] > k_0^0[r;D]$. 
  \end{enumerate}
\end{theorem}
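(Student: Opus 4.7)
The plan rests on a reduction formula asserting that $k_q^\lambda[r;D] = k_0^\lambda[r + s_q;D]$ for every $\lambda \in \R$, where
\begin{equation*}
s_q(x) \;:=\; \tfrac{q}{2}\,D''(x) \;-\; \tfrac{q^2}{4}\,\frac{(D'(x))^2}{D(x)}.
\end{equation*}
This is the content of Proposition~\ref{prop:compar_k}, and it follows from the conjugation $\varphi = D^{-q/2}w$ applied to $\mcl_q^\lambda\varphi = k\varphi$, which converts the $q$-diffusion operator with reaction $r$ into the Fickian operator with reaction $r + s_q$. Passing to the Freidlin-Gärtner infimum then yields $c_q^*[r;D] = c_0^*[r+s_q;D]$, so the four comparisons reduce to Fickian comparisons at $r$ and at $r+s_q$. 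Two facts about $s_q$ are essential: first, $\int_0^1 s_q\,dx = -\tfrac{q^2}{4}\int_0^1 (D')^2/D\,dx < 0$ whenever $D$ is non-constant and $q\neq 0$, so in particular $s_q \not\equiv 0$; second, $k_0^0[s_q;D]=0$, an identity coming from the explicit eigenpair $(\varphi,k) = (D^{-q},r_0)$ of $\mcl_q^0[r_0;D]$ combined with the reduction formula applied at constant $r=r_0$.

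For (i) and (ii), I take $r = \mu - s_q$ and $r = \mu + s_q$ respectively, with $\mu>0$ small. In case (i) the reduction yields $k_q^0[r;D] = \mu$ and $k_0^0[r;D] = \mu + k_0^0[-s_q;D]$; testing the Rayleigh quotient with $v\equiv 1$ gives $k_0^0[-s_q;D]\geq -\int_0^1 s_q\,dx > 0$, so $k_q^0 < k_0^0$. In case (ii), $k_0^0[r;D] = \mu$ and $k_q^0[r;D] = \mu + k_0^0[2s_q;D]$; testing with $v=\tilde v$, the non-constant principal eigenfunction of $\mcl_0^0[s_q;D]$ (which by the eigenvalue equation satisfies $\int s_q \tilde v^2 = \int D(\tilde v')^2 > 0$), gives $k_0^0[2s_q;D] \geq \int D(\tilde v')^2/\int \tilde v^2 > 0$. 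For the spreading-speed inequalities I then let $\mu\downarrow 0$: by monotonicity of $r\mapsto c_0^*[r;D]$ and upper semicontinuity, $c_0^*[\mu;D]\to c_0^*[0;D]=0$ and $c_0^*[\mu + s_q;D]\to c_0^*[s_q;D]=0$, whereas $c_0^*[\mu - s_q;D]\geq c_0^*[-s_q;D]>0$ and $c_0^*[\mu+2s_q;D]\geq c_0^*[2s_q;D]>0$ for every $\mu\geq 0$ (strict positivity of these latter two values coming from the strict eigenvalue inequalities just established, via the equivalence $k^0>0 \Leftrightarrow c^*>0$). Choosing $\mu$ sufficiently small then forces the strict spreading-speed comparisons.

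For (iii) and (iv), with $r$ non-constant fixed, I perturb $D$ around a constant: set $D_\delta(x) := D_0\bigl(1 + \delta\,\tilde D(x)\bigr)$ with $\tilde D \in \mcc^{2,\alpha}$ periodic, zero-mean, and $\delta>0$ small. At $\delta=0$ one has $s_q\equiv 0$, so both eigenvalue problems coincide and share the eigenfunction $\varphi_{r,0}$ of $D_0\p_{xx}+r$. By the envelope theorem, the direct $D$-dependence contributes identically to $k_q^0[r;D_\delta]$ and $k_0^0[r;D_\delta]$ at first order in $\delta$ and cancels in the difference, leaving only the contribution from $s_q$ as a reaction perturbation:
\begin{equation*}
k_q^0[r;D_\delta] - k_0^0[r;D_\delta] \;=\; \tfrac{q}{2}\,D_0\,\delta\,\frac{\int_0^1 \tilde D(x)\,\bigl(\varphi_{r,0}^2\bigr)''(x)\,dx}{\int_0^1 \varphi_{r,0}^2\,dx} \;+\; O(\delta^2).
\end{equation*}
Since $r$ is non-constant, $\varphi_{r,0}$ is non-constant, so $(\varphi_{r,0}^2)''\not\equiv 0$ and has mean zero; a smooth zero-mean $\tilde D$ realizing either sign of $\int \tilde D\,(\varphi_{r,0}^2)''$ exists by density (take, for instance, a mollified multiple of $\pm(\varphi_{r,0}^2)''$). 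Adjusting the sign for that of $q$ and taking $\delta>0$ small yields both (iii) and (iv).

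The main obstacle, to my mind, is the spreading-speed half of (i)--(ii): the Rayleigh-quotient trick only controls the single eigenvalue at $\lambda=0$, whereas $c_q^*$ depends on the full family $\{k_q^\lambda\}_{\lambda>0}$. The proposed workaround, driving $\mu\downarrow 0$ so that one spreading speed is squeezed to $0$ while the other is bounded below by the strictly positive ``threshold'' quantity $c_0^*[-s_q;D]$ or $c_0^*[2s_q;D]$, sidesteps the difficulty, but it relies on continuity and monotonicity of $r\mapsto c_0^*[r;D]$ at the persistence threshold $k_0^0=0$, which deserves a short separate justification.
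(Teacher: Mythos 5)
Your proposal is correct, but only part of it follows the paper's route, so let me compare. For item (i) and for the reduction $k_q^\lambda[r;D]=k_0^\lambda[r-h_q;D]$ (your $s_q$ is $-h_q$) you do exactly what the paper does (Proposition~\ref{prop:compar_k} and Lemma~\ref{lem:Fick_persistr1}), and your speed mechanism --- driving the ``smaller'' configuration to the persistence threshold via $\mu\downarrow 0$ while the other speed stays bounded below by a fixed positive quantity --- is precisely the content of Proposition~\ref{prop:compar_persist_speed}; the point you flag as needing justification (that $c_0^*$ tends to $0$ at the threshold, i.e.\ $\partial_\lambda k^\lambda\vert_{\lambda=0}=0$) is exactly what that proposition establishes, using analyticity and convexity of $\lambda\mapsto k^\lambda$ together with Proposition~\ref{prop:left_right}. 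Where you genuinely differ is twofold. For (ii) the paper takes $r=-a\,h_q$ with $a$ large and invokes Proposition~5.2 of \cite{BerHamRoq05a}, whereas your choice $r=\mu+s_q$, combined with the identity $k_0^0[s_q;D]=0$ (from the explicit eigenpair $(D^{-q},r_0)$ of $\mcl_q^0[r_0;D]$) and the test function $\tilde v$ with $\int_0^1 s_q\tilde v^2=\int_0^1 D(\tilde v')^2>0$, gives a self-contained Rayleigh-quotient proof that $k_0^0[2s_q;D]>0$; this avoids the external large-amplitude eigenvalue result and is arguably cleaner. For (iii)--(iv) the paper sends $D=B\,r^{1/q}$ (resp.\ a near-Dirac profile of $D^{-q}$) to the large-$B$ limit of Proposition~\ref{prop:largeD} and compares arithmetic versus weighted harmonic means, while you perturb a constant diffusion, $D_\delta=D_0(1+\delta\tilde D)$, and read the sign of the first-order term $\tfrac{q}{2}D_0\delta\int_0^1\tilde D\,(\varphi_{r,0}^2)''\big/\int_0^1\varphi_{r,0}^2$. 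Your ``envelope'' cancellation is in fact exact --- after the reduction both quantities are Fickian eigenvalues with the \emph{same} $D_\delta$, differing only by the potential $s_q=\tfrac{q}{2}D_0\delta\tilde D''+O(\delta^2)$ --- and a two-sided Rayleigh bound using the two principal eigenfunctions (which converge to $\varphi_{r,0}$) makes the $O(\delta^2)$ remainder rigorous, with the nondegeneracy $\int_0^1\tilde D\,(\varphi_{r,0}^2)''\neq 0$ achievable since $\varphi_{r,0}$ is nonconstant. The paper's construction buys explicit limiting values of $k_q^0$ (which also feed its phase-shift discussion); your perturbative construction buys locality, producing the desired strict comparisons with $D$ arbitrarily close to a constant, which the large-$B$ argument cannot provide.
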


\begin{remark}
Using the same methodology as in item~$(i)$ of the theorem, it can be shown that for every non-constant $D$ and for $\abs{q} > \abs{\tilde{q}}$, there exists a function $r$ such that
\[
k_{q}[r;D] < k_{\tilde{q}}[r;D], \quad c_q^*[r;D] < c_{\tilde{q}}^*[r;D].
\]
Our proof does not allow us to obtain other comparisons with general values of~$q$ and~$\widetilde{q}$. 
\end{remark}

Let us now describe the main tools that we will use to prove Theorem~\ref{thm:choose_r_d}.
The first ingredient for items \emph{(i)} and \emph{(ii)} is the following proposition, which gives a relationship between the eigenvalues $k_q^{\lambda}[r;D]$ for different values of $q$.

\begin{proposition} \label{prop:compar_k}
For all $q\in \R$ and $\lambda \in \R,$ we have 
\[k_q^\lambda[r;D] = k_0^\lambda[ r - h_q ; D ],  \]
with $\ds h_q:=-\frac{q}{2}D'' +\frac{q^2}{4} \frac{(D')^2}{D}$.
\end{proposition}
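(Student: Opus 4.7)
The plan is to exhibit an explicit conjugation by the multiplication operator $\psi \mapsto D^{q/2}\psi$ that converts $\mathcal{L}_q^0[r;D]$ into $\mathcal{L}_0^0[r-h_q;D]$, and then to transport this relation to every $\lambda \in \R$ via the exponential shift built into the definition of $\mathcal{L}_q^\lambda$.

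First I would treat the case $\lambda = 0$. Writing $\psi = D^{-q/2}\phi$ and expanding the $q$-diffusion operator in divergence form, the goal is to establish the pointwise identity
\begin{equation*}
    D^{q/2}\,\partial_x\!\bigl(D^{1-q}\partial_x(D^{-q/2}\phi)\bigr) = \partial_x(D\,\phi') - h_q\,\phi,
\end{equation*}
with $h_q = -\tfrac{q}{2}D'' + \tfrac{q^2}{4}(D')^2/D$. This is the main computational step: one differentiates $D^{-q/2}\phi$ twice, multiplies by $D^{1-q}$, takes a further derivative, and checks that all first-order terms in $\phi'$ collapse (the coefficients $q/2$ and $1-q/2$ coming from the two inner derivatives sum to $1$, reproducing $D\phi'' + D'\phi' = \partial_x(D\phi')$) and that the zero-order contribution from $\phi$ is exactly $-h_q$. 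Adding $r\psi = rD^{-q/2}\phi$ on both sides and multiplying through by $D^{q/2}$ then yields
\begin{equation*}
    D^{q/2}\,\mathcal{L}_q^0[r;D]\bigl(D^{-q/2}\phi\bigr) = \mathcal{L}_0^0[r-h_q;D]\,\phi.
\end{equation*}

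Second I would transport this identity to an arbitrary $\lambda\in\R$. By definition, $k = k_q^\lambda[r;D]$ is characterised by the existence of a $1$-periodic positive function $\varphi$ satisfying $\mathcal{L}_q^\lambda \varphi = k\varphi$, which is equivalent to $\mathcal{L}_q^0(e^{-\lambda x}\varphi) = k\,e^{-\lambda x}\varphi$. Setting $\phi := D^{q/2}\varphi$, which is still $1$-periodic and positive since $D>0$ is periodic, the $\lambda = 0$ conjugation applied to $e^{-\lambda x}\varphi = D^{-q/2}(e^{-\lambda x}\phi)$ gives $\mathcal{L}_0^0[r-h_q;D](e^{-\lambda x}\phi) = k\,e^{-\lambda x}\phi$, i.e.\ $\mathcal{L}_0^\lambda[r-h_q;D]\,\phi = k\,\phi$. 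By uniqueness of the principal eigenvalue (Krein--Rutman), this forces $k = k_0^\lambda[r-h_q;D]$, proving the claimed equality.

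The only genuine obstacle is the algebraic verification in the first step; everything else is bookkeeping using the relation $\mathcal{L}_q^\lambda\psi = e^{\lambda x}\mathcal{L}_q^0(e^{-\lambda \cdot}\psi)$ and the fact that the conjugating factor $D^{q/2}$ is periodic, so that periodicity and positivity of the eigenfunction are preserved and no new eigenvalue is introduced.
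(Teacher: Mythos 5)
Your proposal is correct and follows essentially the same route as the paper: conjugation of the eigenproblem by the periodic, positive factor $D^{q/2}$ (the paper substitutes $\psi=\phi D^{q/2}$ into the equation for $k_0^\lambda[r;D]$ and reads off $\mathcal{L}_q^\lambda[r+h_q;D]\phi=k_0^\lambda[r;D]\phi$, which is your identity read in the opposite direction), followed by Krein--Rutman uniqueness; handling $\lambda$ through the relation $\mathcal{L}_q^\lambda\psi=e^{\lambda x}\mathcal{L}_q^0(e^{-\lambda\cdot}\psi)$ is equivalent to the paper's direct computation with $\mathcal{L}_0^\lambda$. One slip to fix: in your displayed pointwise identity the inner argument should be $D^q\psi=D^{q/2}\phi$, i.e.
\begin{equation*}
D^{q/2}\,\partial_x\!\left(D^{1-q}\partial_x\!\left(D^{q/2}\phi\right)\right)=\partial_x\!\left(D\,\phi'\right)-h_q\,\phi,
\end{equation*}
not $D^{-q/2}\phi$ inside; with this correction your description of the cancellation (the coefficients $q/2$ and $1-q/2$ summing to $1$, zero-order term $-h_q$) and the conjugation identity $D^{q/2}\,\mathcal{L}_q^0[r;D](D^{-q/2}\phi)=\mathcal{L}_0^0[r-h_q;D]\,\phi$ are exactly right.
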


This proposition shows that, in terms of persistence and spreading, Equation~\eqref{eq:main_KPP}  behaves as a Fisher-KPP equation with Fickian diffusion, but where the growth term~$r$ is replaced with $r- h_q$.
Due to the periodicity of $D$, the arithmetic mean values of~$r$ and $r-h_q$ can be compared: $\langle r- h_q\rangle_A \le \langle r\rangle_A $, and the inequality is strict if $D$ is not constant. Additionally, $\langle r- h_q\rangle_A$ is a decreasing function of $q$. On the other hand, since $D$ is periodic, there exists $x_0$ in $[0,1]$ such that $-h_q(x_0)>0$ and therefore $r(x_0)<r-h_q(x_0)$. With such considerations, we will be able to conclude the first part of items \emph{(i)} and \emph{(ii)} on persistence. 

The second ingredient for items \emph{(i)} and \emph{(ii)}  is the following proposition, which allows us (in situations of persistence, but close to extinction) to extend a comparison on $k_q^0[r;D]$ to a comparison on $c_q^*[r;D]$.

\begin{proposition}\label{prop:compar_persist_speed}
     Let $q,\,\tilde{q}\in \R$ and $r,$ $D$ be fixed. Assume that $k_q^0[r;D] > k_{\tilde{q}}^0[r;D]>0$. 
     Then, there exists $\kappa_0 \in (0,k_{\tilde{q}}^0[r;D])$ such that, for all $\kappa \in (\kappa_0, k_{\tilde{q}}^0[r;D]),$ $$c_q^*[ r - \kappa; D ] > c_{\tilde{q}}^*[ r - \kappa; D ] >0.$$
\end{proposition}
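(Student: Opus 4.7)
The plan is to use the Freidlin-Gärtner formula~\eqref{eq:garfre} together with the elementary identity $k_q^\lambda[r-\kappa;D] = k_q^\lambda[r;D] - \kappa$, which follows directly from the explicit form of $\mcl_q^\lambda$ displayed just before Proposition~\ref{prop:compar_k}. This yields
\[c_q^*[r-\kappa;D] = \inf_{\lambda>0} \frac{k_q^\lambda[r;D] - \kappa}{\lambda},\]
and the analogous formula for $\tilde{q}$. As an infimum of affine functions of $\kappa$ of slope $-1/\lambda < 0$, the map $\kappa \mapsto c_q^*[r-\kappa;D]$ is concave and non-increasing, so continuous on the interior of its interval of finiteness; the same holds for $\tilde{q}$. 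The strategy is then to evaluate both maps at $\kappa = k_{\tilde{q}}^0[r;D]$ and to transport a strict inequality from this endpoint to a left neighborhood via continuity.

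The crucial computation is $\lim_{\kappa \uparrow k_{\tilde{q}}^0[r;D]} c_{\tilde{q}}^*[r-\kappa;D] = 0$. The map $\lambda \mapsto k_{\tilde{q}}^\lambda[r;D]$ is smooth (principal eigenvalues of elliptic operators depend analytically on their parameters), strictly convex, and attains its minimum at $\lambda = 0$; this last property is recalled just before~\eqref{eq:garfre_left} for $q=0$ and extended to every $q$ via Proposition~\ref{prop:compar_k}. Consequently $\partial_\lambda k_{\tilde{q}}^\lambda[r;D]\big|_{\lambda=0} = 0$. For any fixed $\lambda_0>0$, the Freidlin-Gärtner infimum is bounded by $(k_{\tilde{q}}^{\lambda_0}[r;D]-\kappa)/\lambda_0$, whence
\[\limsup_{\kappa \uparrow k_{\tilde{q}}^0[r;D]} c_{\tilde{q}}^*[r-\kappa;D] \le \frac{k_{\tilde{q}}^{\lambda_0}[r;D]-k_{\tilde{q}}^0[r;D]}{\lambda_0},\]
and the right-hand side tends to $0$ as $\lambda_0 \to 0^+$. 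Combined with the non-negativity of $c_{\tilde{q}}^*[r-\kappa;D]$ on $(-\infty, k_{\tilde{q}}^0[r;D])$, this delivers the limit.

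In contrast, $k_{\tilde{q}}^0[r;D]$ lies in the interior of the finiteness interval of $\kappa \mapsto c_q^*[r-\kappa;D]$ because $k_{\tilde{q}}^0[r;D] < k_q^0[r;D]$ by assumption, so continuity of this concave map gives
\[\lim_{\kappa \uparrow k_{\tilde{q}}^0[r;D]} c_q^*[r-\kappa;D] = c_q^*\bigl[r - k_{\tilde{q}}^0[r;D]; D\bigr],\]
and this value is strictly positive since $k_q^0[r - k_{\tilde{q}}^0[r;D]; D] = k_q^0[r;D] - k_{\tilde{q}}^0[r;D] > 0$ and positivity of $k_q^0$ is equivalent to positivity of the spreading speed. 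Combining the two limits produces the required threshold $\kappa_0 \in (0, k_{\tilde{q}}^0[r;D])$. The only genuinely delicate ingredient is the vanishing of $c_{\tilde{q}}^*$ at its own persistence threshold, which boils down to $\partial_\lambda k_{\tilde{q}}^\lambda|_{\lambda=0} = 0$; the rest of the argument is continuity of an infimum of affine functions and the already-established equivalence between persistence and positive spreading.
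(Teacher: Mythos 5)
Your proof is correct and follows essentially the same route as the paper: both rest on the identity $k^\lambda_q[r-\kappa;D]=k^\lambda_q[r;D]-\kappa$ and on the fact that $\left.\partial_\lambda k_{\tilde q}^\lambda[r;D]\right|_{\lambda=0}=0$ (minimum of a smooth convex function at $\lambda=0$) forces $c^*_{\tilde q}[r-\kappa;D]\to 0$ as $\kappa\uparrow k_{\tilde q}^0[r;D]$, while $c^*_q[r-\kappa;D]$ remains bounded below by the positive value $c^*_q[r-k_{\tilde q}^0[r;D];D]$. The only cosmetic differences are that you obtain the vanishing limit directly by testing with a fixed $\lambda_0$ and letting $\lambda_0\to 0^+$, whereas the paper argues by contradiction along a sequence $\kappa_n$, and you use concavity/continuity in $\kappa$ where the paper simply uses monotonicity in $\kappa$.
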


Finally, for items \emph{(iii)} and \emph{(iv)}, which only deal with a persistence property, we use the following proposition.

\begin{proposition} \label{prop:largeD}
    Let $D$ and $r$ be fixed. For all $q\in \R$, we have:
    \[\lim_{B\to \infty}k^{0}_q[ r; B\, D ]= \int_0^1 r \, \frac{D^{-q}}{\int_0^1 D^{-q}}.\]
\end{proposition}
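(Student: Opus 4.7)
The plan is to uncover a hidden self-adjoint structure of $\mcl_q^0[r;BD]$ via the Liouville-type change of variable $\psi := D^q \varphi$. Applied to the right-eigenvalue equation $\mcl_q^0[r;BD]\varphi_B = k_B\varphi_B$ (with $k_B := k_q^0[r;BD]$ and $\varphi_B > 0$ the Krein--Rutman eigenfunction), this substitution produces the weighted Sturm--Liouville equation
\[B\,\dr_x\!\left(D^{1-q}\psi'\right) + r\,D^{-q}\psi \;=\; k_B\,D^{-q}\psi,\]
which is self-adjoint with respect to the positive weight $w := D^{-q}$. Since the transformed eigenfunction $\psi_B := D^q \varphi_B$ is positive and periodic, $k_B$ coincides with the top of the spectrum of this weighted problem and admits the Rayleigh quotient characterization
\[k_B \;=\; \max_{\psi \in H^1_{\mathrm{per}}\setminus\{0\}} \frac{\int_0^1 rD^{-q}\psi^2 \;-\; B\int_0^1 D^{1-q}(\psi')^2}{\int_0^1 D^{-q}\psi^2}.\]

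Next, I would extract uniform two-sided bounds on $k_B$. Testing the variational formula with $\psi \equiv 1$ already gives the lower bound $k_B \ge I$, where $I := \int_0^1 rD^{-q}\big/\int_0^1 D^{-q}$ is the conjectured limit. Normalizing the eigenfunction by $\int_0^1 D^{-q}\psi_B^2 = 1$, the Rayleigh identity at $\psi_B$ reads $k_B = \int_0^1 rD^{-q}\psi_B^2 - B\int_0^1 D^{1-q}(\psi_B')^2$, whence $k_B \le \max_{[0,1]} r$ since the subtracted term is nonnegative. Combining the two bounds,
\[B \int_0^1 D^{1-q}(\psi_B')^2 \;=\; \int_0^1 (r-k_B)\,D^{-q}\psi_B^2 \;\le\; \max_{[0,1]} r \;-\; I,\]
and, using $\inf_{[0,1]} D^{1-q} > 0$, this yields $\int_0^1 (\psi_B')^2 = O(1/B)$.

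The last step is to pass to the limit. The Poincar\'e--Wirtinger inequality on $\R/\Z$ gives $\int_0^1 (\psi_B - \bar\psi_B)^2 \le (4\pi^2)^{-1}\int_0^1 (\psi_B')^2 = O(1/B)$, where $\bar\psi_B := \int_0^1 \psi_B$. Expanding the normalization yields $\bar\psi_B^2 \int_0^1 D^{-q} = 1 + O(B^{-1/2})$, so $\bar\psi_B$ remains bounded and bounded below, and similarly
\[\int_0^1 rD^{-q}\psi_B^2 \;=\; \bar\psi_B^2 \int_0^1 rD^{-q} \;+\; O(B^{-1/2}) \;\xrightarrow[B\to\infty]{}\; I.\]
Combined with $k_B \le \int_0^1 rD^{-q}\psi_B^2$ and $k_B \ge I$, this squeezes $k_B$ onto $I$, proving the claim.

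The main conceptual step is spotting the Liouville substitution $\psi = D^q\varphi$, which converts the non-self-adjoint $q$-diffusion operator into a genuine weighted Sturm--Liouville problem; once this is done, the large-$B$ analysis is a textbook singular-perturbation argument driven by the Poincar\'e inequality. The appearance of the weight $D^{-q}$ in the final formula is transparent: $\psi_B$ flattens to a constant as $B\to\infty$, so $\varphi_B = D^{-q}\psi_B$ aligns (up to scaling) with the unique positive periodic element of the kernel of $\dr_x(D^{1-q}\dr_x(D^q\cdot))$, namely $D^{-q}$.
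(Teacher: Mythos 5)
Your proof is correct, and its first half is essentially the paper's Lemma~\ref{lem:variational}: the substitution $\psi=D^q\varphi$ turning $\mcl_q^0[r;BD]$ into the self-adjoint problem $B(D^{1-q}\psi')'+rD^{-q}\psi=k_B D^{-q}\psi$ with weight $D^{-q}$, and the resulting Rayleigh quotient, are exactly the paper's \eqref{eq:vpv}--\eqref{eq:var1p}; your test function $\psi\equiv 1$ and the bound $k_B\le\max r$ likewise reproduce \eqref{eq:bounds_k^0}. Where you genuinely diverge is in passing to the limit. The paper argues by compactness: it normalizes the eigenfunctions in $L^2$, invokes elliptic estimates and Sobolev embeddings to extract a subsequence converging to $C\,D^{-q}$ (the positive periodic kernel element), then uses the integrated identity \eqref{eq:vpvn2} to bound $k^0_q[r;B_nD]$ by a ratio that converges to the claimed limit, and finally removes the subsequence by uniqueness of the limit. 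You instead run a purely quantitative energy argument: the Rayleigh identity at the normalized eigenfunction gives $B\int_0^1 D^{1-q}(\psi_B')^2\le \max r - I$, hence $\|\psi_B'\|_{L^2}^2=O(1/B)$, and Poincar\'e--Wirtinger plus the normalization (together with the elementary bound $\bar\psi_B\le\|\psi_B\|_{L^2}\le(\max D^{q})^{1/2}$, which you implicitly need to control the cross term) flattens $\psi_B$ to a constant at rate $O(B^{-1/2})$, after which the squeeze $I\le k_B\le\int_0^1 rD^{-q}\psi_B^2\to I$ closes the argument. Your route avoids elliptic regularity, subsequence extraction and the uniqueness-of-limit step, works for the full family $B\to\infty$ directly, and yields an explicit $O(B^{-1/2})$ convergence rate; the paper's compactness route gives, as a by-product, the precise asymptotic profile $\psi_n\to\sqrt{\langle D^{2q}\rangle_H}\,D^{-q}$ of the original eigenfunctions. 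One small point to make explicit: your identification of $k_B$ with the top of the weighted Rayleigh quotient rests on the standard fact that, for a self-adjoint weighted Sturm--Liouville problem, an eigenvalue admitting a positive periodic eigenfunction is the principal one (the paper secures this through the conjugated operator $\mct$); it is worth a sentence but is not a gap.
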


On the one hand, Proposition~\ref{prop:largeD} implies that provided $r>0$ and $B$ is sufficiently large, diffusion functions of the form $B\, D=B \, r^{1/q}$  lead to a higher ability of persistence for the Fickian diffusion than for other values of $q$, due to the arithmetic-harmonic mean inequality. The function $Br^{1/q}$ is \enquote{in phase} with~$r$ for $q>0$ and is \enquote{out of phase} with~$r$ for $q<0$. We will prove~$(iii)$ in this way.

To prove the reverse inequality $(iv)$, we will consider functions of the form $D_1 := B\, D$, with $B$ large enough, and for a function $D$ such that $1/D$ is concentrated around the maximum of $r$ if $q>0$, or $D$ is concentrated around the maximum of $r$ if $q<0$. Again, $(iv)$ will follow from Proposition~\ref{prop:largeD}.

Since $r$ is fixed in~$(iii)$ and~$(iv)$, it is not possible in general to choose $D$ such that the population is close to extinction, and to use Proposition~\ref{prop:compar_persist_speed} to extend the result on persistence to a result on the spreading speeds, as we did for items~$(i)$ and~$(ii)$.
However, for every non-constant $r$, it can be stated that there exist pairs $(D,\kappa)$ 
(where~$\kappa$ is a constant) such that $c_q^*[r-\kappa;D] < c_0^*[r-\kappa;D]$, and pairs $(D,\kappa)$ for which $c_q^*[r-\kappa;D] > c_0^*[r-\kappa;D]$.

\subsection{Constant growth rate \texorpdfstring{$r$}{}}

In this section, we focus on the special case where the growth rate $r$ is constant.
The main result of this section is the following theorem,
which shows that the spreading speeds for Fickian diffusion ($q=0$) and Fokker-Planck diffusion ($q=1$) coincide, as they are, in some sense, symmetric. The center of this symmetry is the Stratonovich diffusion ($q=1/2$).

\begin{theorem}\label{thm:r_constant}
  Assume that $r>0$ is constant. We have the following properties:
  \begin{enumerate}
  \item[(i)] The function $q\mapsto c^*_{q}[r;D]$ is symmetric around $1/2$, that is, for all $\tilde{q}\in\R$,
  \[c^*_{1/2-\tilde{q}}[r;D]=c^*_{1/2+\tilde{q}}[r;D].\]
  In particular, $c^*_0[r;D]=c^*_1[r;D]$.
\item[(ii)] For $q=1/2$, we have the explicit formula:
  \[c^*_{1/2}[r;D]=2\sqrt{r}\times\scal{ \sqrt{D}}_H,\esp\text{with }\scal{\sqrt{D}}_H := \pth{\int_{0}^1\frac{1}{\sqrt{D}}}^{-1}.\]
\item[(iii)]   The maximal speed is reached at $q=1/2$:
  \[c^*_{1/2}[r;D]=\max_{q\in\R}c^*_q[r;D].\]
  \end{enumerate}
\end{theorem}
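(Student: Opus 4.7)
My plan is to reduce the periodic eigenvalue problem for $\mcl_q^\lambda$ (with constant $r$) to a Schr\"odinger equation on a circle with a $\lambda$-twisted boundary condition, via three successive transformations. First, substitute $\tilde\varphi=D^q\varphi$ in $\mcl_q^\lambda\varphi=k\varphi$ (the substitution underlying Proposition~\ref{prop:compar_k}, kept here with the exponential tilt explicit); since $D$ is periodic so is $\tilde\varphi$, and a direct computation gives the equivalent problem $D\tilde\varphi''+((1-q)D'-2\lambda D)\tilde\varphi'+(\lambda^2 D-(1-q)\lambda D'+r)\tilde\varphi=k\tilde\varphi$. Second, change variables to $y=\int_0^x D^{-1/2}(s)\,ds$, which has period $L=\int_0^1 D^{-1/2}\,dx=1/\scal{\sqrt D}_H$; writing $\hat D(y):=D(x(y))$ normalizes the leading coefficient to one. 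Third, kill the remaining first-order term by a gauge transformation $\chi=e^\nu \eta$; after computation, all $\lambda$-dependent contributions to the zeroth-order coefficient cancel, leaving
\[ \eta''+V_q(y)\eta=k\eta,\qquad \eta(y+L)=e^{-\lambda}\eta(y),\]
with the $\lambda$-independent potential
\[ V_q(y)-r=-g_q'(y)^2+g_q''(y),\qquad g_q(y):=\tfrac{q-1/2}{2}\ln\hat D(y), \]
and $g_q$ is $L$-periodic.

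Part~(ii) is then immediate: at $q=1/2$, $g_{1/2}\equiv 0$ so $V_{1/2}\equiv r$, the only positive solution of $\eta''=(k-r)\eta$ satisfying the twisted BC is $\eta\propto e^{-\lambda y/L}$, giving $k_{1/2}^\lambda=r+\lambda^2/L^2$, and minimizing $(r+\lambda^2/L^2)/\lambda$ at $\lambda=\sqrt r\,L$ yields $c^*_{1/2}=2\sqrt r/L=2\sqrt r\,\scal{\sqrt D}_H$. For part~(i), note the factorization $H_q:=\partial_y^2+(V_q-r)=(\partial_y-g_q')(\partial_y+g_q')$ and, using $g_{1-q}=-g_q$, $H_{1-q}=(\partial_y+g_q')(\partial_y-g_q')$; these are supersymmetric (SUSY) partners. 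The intertwiner $A:=\partial_y+g_q'$ satisfies $H_{1-q}A=AH_q$ and preserves the twisted BC (since $g_q$ is periodic). Its kernel on that space consists of functions proportional to $e^{-g_q}$, which is periodic and thus fails the twist for $\lambda\ne 0$; hence $A$ is injective, and composing with $B:=\partial_y-g_q'$ and using $BA=H_q$ shows that $A$ induces a bijection between the $E$-eigenspaces of $H_q$ and $H_{1-q}$ for every $E\ne 0$. Since $0$ is not in either spectrum when $\lambda\ne 0$, the two full spectra coincide; in particular the principal eigenvalues agree and $k_q^\lambda=k_{1-q}^\lambda$ for every $\lambda$. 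Taking the Freidlin-G\"artner infimum yields $c^*_{1/2-\tilde q}=c^*_{1/2+\tilde q}$.

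For part~(iii), let $\eta_0>0$ be the principal eigenfunction of $H_q$ with twisted BC and set $\xi_0:=e^{\lambda y/L}\eta_0$, so $\xi_0>0$ is $L$-periodic and $\tilde H_q\xi_0=E_q^\lambda\xi_0$ with $\tilde H_q:=\partial_y^2-(2\lambda/L)\partial_y+(\lambda^2/L^2+V_q-r)$ and $E_q^\lambda:=k_q^\lambda-r$. Multiplying by $\xi_0$, integrating over $[0,L]$, and using periodicity (boundary terms vanish; $\int g_q''\xi_0^2=-2\int g_q'\xi_0\xi_0'$) gives the identity
\[ E_q^\lambda \int_0^L\xi_0^2\,dy=\frac{\lambda^2}{L^2}\int_0^L\xi_0^2\,dy-\int_0^L(\xi_0'+g_q'\xi_0)^2\,dy, \]
so $E_q^\lambda\le\lambda^2/L^2$ and hence $k_q^\lambda\le r+\lambda^2/L^2=k_{1/2}^\lambda$ for every $\lambda$; the Freidlin-G\"artner formula then gives $c^*_q\le c^*_{1/2}$ for every $q\in\R$. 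The main obstacle I anticipate is the bookkeeping in the three-step reduction: the cancellation of every $\lambda$-dependent piece in $V_q$ at the gauge step is the keystone of the argument, and only once it is in hand do the SUSY factorization (for~(i)) and the Rayleigh-type identity for $\xi_0$ (for~(iii)) become visible.
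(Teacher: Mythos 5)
Your proof is correct, and for items (i) and (iii) it follows a genuinely different route from the paper. The reduction in your first three steps is essentially the paper's Proposition~\ref{ppn:equiv_unif_laplacian} pushed one step further: instead of keeping $\lambda$ in the coefficients of the transformed operator, you gauge it entirely into a twisted (Floquet) boundary condition, leaving a $\lambda$-independent potential $V_q-r=g_q''-(g_q')^2$ with $g_q=-\tfrac{s_q}{2}\ln\hat D$ (I checked the bookkeeping; the cancellation you call the keystone does hold). From there, the paper proves (i) by combining the left--right symmetry $k_q^{-\lambda}=k_q^{\lambda}$ (Proposition~\ref{prop:left_right}) with the cited sign-flip invariance of the principal eigenvalue for drifts whose potential is affine in the drift (Proposition~\ref{prop:VP}, i.e.\ \cite{BouHamRoq25}), whereas you obtain $k_q^\lambda=k_{1-q}^\lambda$ directly, at the same $\lambda$ and without any left--right argument, from the Darboux/SUSY factorization $H_q=BA$, $H_{1-q}=AB$ and the triviality of $\ker A$, $\ker B$ on the twisted space; in effect you give a self-contained proof of the special case of Proposition~\ref{prop:VP} that the theorem needs. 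For (iii) the paper invokes \cite[Proposition 2.7]{Nad11}, while your sum-of-squares identity $E_q^\lambda\int\xi_0^2=\tfrac{\lambda^2}{L^2}\int\xi_0^2-\int(\xi_0'+g_q'\xi_0)^2$ proves $k_q^\lambda\le r+\lambda^2/L^2=k_{1/2}^\lambda$ directly and elementarily. Item (ii) is essentially the paper's argument. What your route buys is self-containedness and a transparent structural explanation of the $q\leftrightarrow 1-q$ symmetry; the paper's route is shorter because it delegates to external results. Two small points you should make explicit: the intertwiner $A$ does not preserve positivity, so passing from \enquote{the twisted spectra coincide} to \enquote{the principal eigenvalues coincide} uses the standard Krein--Rutman fact that the principal eigenvalue is the (unique real) eigenvalue of maximal real part; and the assertion that $0$ is not a twisted eigenvalue for $\lambda\neq0$ deserves its one-line proof (if $H_q\eta=0$ then $A\eta\in\ker B$ is twisted, hence $A\eta=0$, hence $\eta\in\ker A$ is twisted, hence $\eta=0$). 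Both are immediate from what you already set up.
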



Item~$(i)$ of this theorem is somewhat unexpected, and the ideas of its proof are explained below.
Item $(ii)$ is also remarkable, because it allows us to establish a connection to previous work on slowly varying media, which we now detail.

For $L>0$, when $D(x)$ is replaced with $D(x/L)$, we can define the operators $\mcl_q^{\lambda}(L)$ analogously to $\mcl_q^{\lambda}$; 
the operator $\mcl_q^{\lambda}(L)$ has a principal eigenvalue $k_q^{\lambda}(L)$, depending on~$L$,
and associated with a $L-$periodic principal eigenfunction.
  The criterion for persistence still holds, and the spreading speed $c_q^*(L)$ is given by the Freidlin-Gärtner formula~\eqref{eq:garfre}, i.e.
  \[c_q^*(L)=\inf_{\lambda>0}\frac{k_q^{\lambda}(L)}{\lambda}.\]
  Item~$(ii)$ of Theorem~\ref{thm:r_constant} gives an explicit formula for $c_{1/2}^*(1)$;
  this explicit formula also appears in the context of slowly varying environments.
  Namely, as a consequence of~\cite[Theorem 2.3]{HamNadRoq11}, we have
  \[\lim_{L \to +\infty} c^*_0(L)=\inf_{\lambda>0}\pth{\frac{r}{\lambda}+ \lambda\scal{\sqrt{D}}^2_H}
  =2\sqrt{r}\times\scal{ \sqrt{D}}_H,\]
  which is equal to $c^*_{1/2}(1)$.
  Moreover,
  in the proof of Theorem~\ref{thm:r_constant} $(ii)$, we will also prove that $c^*_{1/2}(L)$ is in fact independent of~$L$: thereby, for $q=1/2$ and constant~$r$, the \emph{spreading speed does not depend on the period}.
Let us close this discussion about different periods and keep, from now on, $L=1$.

%

The proof of item~$(ii)$ of Theorem~\ref{thm:r_constant} relies on three key propositions. 
The first proposition turns the heterogeneous diffusion into a homogeneous diffusion with a drift term depending on $q$.
This transformation uses the space variable change $y = h(x)$, where
\[h(x):=\int_0^x\frac{\de x'}{\sqrt{D(x')}}.\]
Note that $1/h(1)=\scal{\sqrt{D}}_H$. 

\begin{proposition}\label{ppn:equiv_unif_laplacian}
Let $R(y):=r\pth{h^{-1}(y)}$, $\drift(y):=\ln \pth{D\pth{h^{-1}(y)}}$ and $s_q:=\frac{1}{2}-q$. Let~$\widehat{k}_q^{\lambda}$ be the $h(1)-$periodic principal eigenvalue of the operator $\widehat{\mcl}_q^{\lambda}$ defined by
\[\widehat{\mcl}_q^{\lambda}:\Phi\mapsto\dr_{yy}\Phi - \dr_y\cro{\pth{2\lambda+s_q\drift'}\Phi}+\left(R+\lambda s_q\drift'+\lambda^2\right)\Phi.\]
Then for all $\lambda\in\R$ and $q\in\R$, 
\[k_q^{\lambda}[r;D]=\widehat{k}_q^{\lambda\scal{\sqrt{D}}_H}.\]
\end{proposition}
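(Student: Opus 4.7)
My plan is to construct, starting from the principal eigenfunction $\varphi$ of $\mcl_q^\lambda[r;D]$, an explicit positive $h(1)$-periodic eigenfunction of $\widehat{\mcl}_q^\mu$ sharing the same eigenvalue, where $\mu:=\lambda\scal{\sqrt{D}}_H=\lambda/h(1)$. By uniqueness of the principal eigenvalue (Krein-Rutman applied to the elliptic operator $\widehat{\mcl}_q^\mu$ with $h(1)$-periodic coefficients), this forces $k_q^\lambda[r;D]=\widehat{k}_q^\mu$. It is more convenient to work with the untwisted function $u(x):=e^{-\lambda x}\varphi(x)$, which satisfies $\mcl_q^0 u=k_q^\lambda u$ together with the Bloch-type condition $u(x+1)=e^{-\lambda}u(x)$.

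The transformation will proceed in two stages. First I will apply the Liouville-type change of variables $y=h(x)$: since $h'=1/\sqrt{D}$, this is exactly what converts the heterogeneous diffusion $D\,\dr_{xx}$ into the constant-coefficient Laplacian $\dr_{yy}$ in $y$, at the price of generating an extra first-order drift. Rewriting $\mcl_q^0 u=k_q^\lambda u$ in the variable $y$ for $U(y):=u(h^{-1}(y))$ is then a direct computation, based on the identities $D'(x)/\sqrt{D(x)}=P'(y)$ and $D''(x)=P''(y)+\tfrac{1}{2}(P'(y))^2$, both of which follow from $P(y)=\ln D(h^{-1}(y))$.

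The resulting equation does not yet match $\widehat{\mcl}_q^0 U=k_q^\lambda U$: the coefficient of $U'$ turns out to be $(q+\tfrac{1}{2})P'$ rather than the expected $-s_q P'=(q-\tfrac{1}{2})P'$, and there is a leftover $(P')^2$ contribution coming from the $qD''u$ term. The second stage is therefore a multiplicative gauge transformation $V(y):=\sqrt{D(h^{-1}(y))}\,U(y)=e^{P/2}U$. Substituting $U=e^{-P/2}V$ shifts the $V'$-coefficient by exactly $-P'$ (bringing it from $(q+\tfrac{1}{2})P'$ to $(q-\tfrac{1}{2})P'$) and produces compensating $(P')^2$ contributions of total weight $\tfrac{1}{4}-\tfrac{1}{2}(q+\tfrac{1}{2})+\tfrac{q}{2}=0$. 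A short direct calculation will then confirm that $V$ satisfies precisely $\widehat{\mcl}_q^0 V=k_q^\lambda V$.

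To conclude, $D$ being $1$-periodic ensures $P$, and hence $e^{P/2}$, is $h(1)$-periodic in $y$; combined with $U(y+h(1))=e^{-\lambda}U(y)$ this gives $V(y+h(1))=e^{-\lambda}V(y)$. Writing $V(y)=e^{-\mu y}\Psi(y)$ with $\mu=\lambda/h(1)=\lambda\scal{\sqrt{D}}_H$ makes $\Psi$ positive and $h(1)$-periodic, and translates $\widehat{\mcl}_q^0 V=k_q^\lambda V$ into $\widehat{\mcl}_q^\mu\Psi=k_q^\lambda \Psi$ by the very construction of $\widehat{\mcl}_q^\mu$ as an $e^{\mu y}$-conjugate of $\widehat{\mcl}_q^0$. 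The main obstacle is identifying the correct gauge factor $\sqrt{D}$: the change of variables alone leaves a residual $P'$ in the first-order coefficient together with unwanted $(P')^2$ terms that cannot be absorbed. Testing the one-parameter family $V=e^{\gamma P}U$ shows that the two mismatches can be killed simultaneously only for $\gamma=-\tfrac{1}{2}$, i.e.\ by the specific factor $\sqrt{D}$, which is the key insight of the proof.
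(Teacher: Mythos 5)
Your proposal is correct and follows essentially the same route as the paper: untwist by $e^{-\lambda x}$, change variables $y=h(x)$, apply the gauge factor $\sqrt{D}$ (the paper packages these two steps in Lemma~\ref{lem:space_deformation}, proved for the evolution problem), retwist by $e^{\mu y}$ with $\mu=\lambda\scal{\sqrt{D}}_H$, and conclude by $h(1)$-periodicity, positivity and Krein--Rutman uniqueness. Only a minor slip in your closing remark: in the family $V=e^{\gamma P}U$ the correct choice is $\gamma=+\tfrac12$ (i.e.\ $V=\sqrt{D}\,U$, exactly as in your main computation), not $\gamma=-\tfrac12$.
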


As a consequence of Proposition~\ref{ppn:equiv_unif_laplacian}, we have $k_q^0 = \widehat{k}_q^0$ and, using~\eqref{eq:garfre}:
\[c^*_q[r;D]=\scal{\sqrt{D}}_H\times\inf_{\lambda>0}\frac{\widehat{k}_q^{\lambda}}{\lambda}.\]
This result highlights a symmetry between Fickian diffusion ($q = 0$) and Fokker-Planck diffusion ($q = 1$), since $s_0 = -s_1$. However, demonstrating that
$\widehat{k}_{0}^{\lambda} = \widehat{k}_{1}^{\lambda}$ or, more generally, that $\widehat{k}_{1/2 - \tilde{q}}^{\lambda} = \widehat{k}_{1/2 + \tilde{q}}^{\lambda}$ for $\tilde{q} \in \mathbb{R}$ remains a challenging task.
The crucial ingredient is the following result from \cite{BouHamRoq25}, which implies that, given a periodic Schr\"odinger operator with an advection term and a potential term proportional to the advection term, the principal eigenvalue remains invariant under a sign change of the advection term.
\begin{proposition}[\cite{BouHamRoq25}, Theorem 2.2]\label{prop:VP}
Let $a\in \mathcal{C}^{0,\alpha}(\R) $ and $b\in \mathcal{C}^{1,\alpha}(\R)$ be two $1-$periodic functions. Consider the
two operators acting on $1-$periodic functions:
 \begin{align*}
     \mcm^- : \Phi\mapsto\dr_{yy}\Phi - \dr_y(b\Phi)+a \Phi \hbox{ and }
     \mcm^+ : \Phi\mapsto\dr_{yy}\Phi + \dr_y(b\Phi)+a \Phi.  
 \end{align*}
If there exist $\beta\in\R$ and $\gamma\in\R$ such that $a=\beta+\gamma b$, then $\mcm^-$ and $\mcm^+$ have the same principal eigenvalue.
 \end{proposition}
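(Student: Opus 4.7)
The plan is to reduce both eigenvalue problems to Schrödinger form via a symmetrizing conjugation, then exploit the algebraic structure induced by $a = \beta + \gamma b$ to identify the resulting pair as supersymmetric partners; equality of principal eigenvalues will then follow by combining the SUSY isospectrality with a Krein--Rutman symmetry under sign change of the Floquet exponent.

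First I would set $B(x) := \int_0^x b$ and perform the conjugations $\Phi = e^{-B/2}\Psi^+$ for $\mcm^+$ and $\Phi = e^{B/2}\Psi^-$ for $\mcm^-$. A direct calculation eliminates the first-order term and yields the Schrödinger-type equations $(\Psi^\pm)'' + V^\pm \Psi^\pm = k^\pm \Psi^\pm$ with potentials $V^\pm = a \pm \tfrac{1}{2}b' - \tfrac{1}{4}b^2$, subject to Floquet boundary conditions $\Psi^\pm(x+1) = e^{\mp \bar{b}/2}\Psi^\pm(x)$, where $\bar{b} := \int_0^1 b$. This reduction uses only the regularity of $b$, not the affine hypothesis on $a$.

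The hypothesis $a = \beta + \gamma b$ enters through completion of the square. Setting $\varphi := b/2 - \gamma$ (periodic, $\mcc^{1,\alpha}$) and $C := \beta + \gamma^2$, one obtains $V^\pm = C - \varphi^2 \pm \varphi'$. With the first-order operators $A := \dr_y + \varphi$ and $A^* := -\dr_y + \varphi$, a direct expansion then yields the supersymmetric factorization
\[
\dr_y^2 + V^+ = C - A^*A,\qquad \dr_y^2 + V^- = C - AA^*.
\]
On any fixed Floquet class, the standard SUSY argument---if $A^*A\psi = \mu\psi$ with $\mu \neq 0$, then $AA^*(A\psi) = \mu(A\psi)$, and symmetrically via $A^*$---shows that the nonzero spectra of $A^*A$ and $AA^*$ coincide. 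To bring the two eigenvalue problems onto a common Floquet class, I would use Krein--Rutman: after the change of variable $\Psi^\pm = e^{\mp \bar{b} x/2}\chi^\pm$, each operator becomes a periodic operator whose formal adjoint is the same operator with the opposite sign of drift, and these share the same principal eigenvalue. Thus $k^+$ is also realized on the class of Floquet exponent $-\bar{b}/2$, where comparison with $k^-$ via the SUSY isospectrality (and a shift by $C$) yields $k^+ = k^-$.

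The hard part will be the degenerate cases $\gamma = 0$ or $\gamma = \bar{b}$: these are exactly the values for which the kernel elements $e^{\mp \int \varphi}$ of $A$, $A^*$ fall into the Floquet class used in the SUSY comparison, so that $0$ appears in the spectrum of one of $A^*A$, $AA^*$ but possibly not of the other, leaving a gap between their principal eigenvalues that the above argument does not control. I would treat these configurations either directly---for $\gamma = 0$, $a = \beta$ is constant and integration of $\mcm^\pm \Phi = k\Phi$ against $1$ over a period immediately gives $k^\pm = \beta$---or by a continuity argument in the parameter $\gamma$, passing from the generic case to these boundary values.
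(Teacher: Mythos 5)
This proposition is not proved in the paper at all: it is imported verbatim from \cite{BouHamRoq25} (Theorem 2.2), with only the remark that passing from the adjoint operators treated there to $\mcm^{\pm}$ is harmless because an operator and its adjoint share the same principal eigenvalue. Your argument is therefore necessarily a different route, and as a self-contained proof it is essentially sound: the Liouville substitutions $\Phi=e^{\mp B/2}\Psi^{\pm}$ do produce $\partial_{yy}+V^{\pm}$ with $V^{\pm}=a\pm\tfrac12 b'-\tfrac14 b^2$, the hypothesis $a=\beta+\gamma b$ gives exactly the factorizations $\partial_{yy}+V^{+}=C-A^{*}A$, $\partial_{yy}+V^{-}=C-AA^{*}$ with $\varphi=b/2-\gamma$, $A=\partial_y+\varphi$, and since $\varphi$ is periodic the intertwining by $A$, $A^{*}$ preserves each Floquet class, while the adjoint trick (the conjugated periodic operator with drift $2\mu$ is the formal adjoint of the one with drift $-2\mu$) legitimately transports $k^{+}$ onto the Floquet class carrying $k^{-}$; your identification of the exceptional values $\gamma\in\{0,\bar b\}$ as exactly those for which a zero mode $e^{\mp\int\varphi}$ lies in that class is correct, and both of your proposed treatments (direct integration over a period when $a$ is constant; continuity of the principal eigenvalue in $\gamma$) work. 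Three details should be tightened rather than left implicit. First, a sign slip: with your substitutions the multipliers are $\Psi^{\pm}(y+1)=e^{\pm\bar b/2}\Psi^{\pm}(y)$, not $e^{\mp\bar b/2}$; this is harmless but propagates through the bookkeeping. Second, ``the nonzero spectra coincide'' does not by itself give equality of \emph{principal} eigenvalues: you must invoke the standard fact that for these periodic second-order operators the principal eigenvalue is the spectral bound (the eigenvalue of maximal real part, by Krein--Rutman applied to the resolvent), so that equality of spectra on the common Floquet class forces equality of the principal eigenvalues. Third, in the generic case $\gamma\notin\{0,\bar b\}$ you should note that $C$ is not an eigenvalue of either operator on that class — this follows from the very zero-mode computation you use to locate the degenerate $\gamma$'s (if $A^{*}A\Psi=0$ in the class, then $A\Psi$ is a zero mode of $A^{*}$ in the wrong class, hence $A\Psi=0$, hence $\Psi=0$) — so the full spectra, not just the nonzero parts, coincide. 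With these points made explicit, your proof stands as an independent verification of the cited result, obtained by a supersymmetric-factorization argument rather than by the change-of-unknown analysis of \cite{BouHamRoq25}.
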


In fact, Theorem 2.2 in \cite{BouHamRoq25} deals with the adjoint operators of~$\mcm^-$ and~$\mcm^+$, but the result holds also for~$\mcm^-$ and~$\mcm^+$, since the principal eigenvalue of an operator coincides with the principal eigenvalue of its adjoint.

Last, we will need to use the equality between the spreading speed to the left and the spreading speed to the right.
Owing to the nonsymmetric nature of Equation~\eqref{eq:main_KPP} with respect to the variable $x$, this equality is not obvious. We state it in the following proposition. 
\begin{proposition}\label{prop:left_right}
  For all $q\in \R$ and $\lambda \in \R,$ $k^{-\lambda}_q[r;D] = k^{\lambda}_q[r;D]$. 
  As a consequence, the spreading speed to the left and the spreading speed to the right coincide:
\[c^{*}_q[r;D]=\inf_{\lambda>0}\frac{k_q^{\lambda}[r;D]}{\lambda}=\inf_{\lambda>0}\frac{k_q^{-\lambda}[r;D]}{\lambda}=c_q^{*,\,left}[r;D].\]
\end{proposition}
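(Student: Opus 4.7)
The plan is to reduce the question to the Fickian case $q=0$, where the ambient operator is formally self-adjoint, and then to exploit duality. By Proposition~\ref{prop:compar_k}, and since $h_q$ is independent of $\lambda$,
\[
k_q^{\lambda}[r;D] = k_0^{\lambda}[r-h_q; D], \qquad k_q^{-\lambda}[r;D] = k_0^{-\lambda}[r-h_q; D].
\]
Setting $\tilde r := r - h_q$, it therefore suffices to prove the symmetry $k_0^{\lambda}[\tilde r; D] = k_0^{-\lambda}[\tilde r; D]$ for an arbitrary $1$-periodic $\tilde r$.

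For $q=0$ the operator $\mathcal{L}_0^0[\tilde r;D]\phi = \partial_x(D\partial_x\phi) + \tilde r\,\phi$ is formally self-adjoint on $L^2(\R/\Z)$. Since $\mathcal{L}_0^{\lambda} = e^{\lambda x}\,\mathcal{L}_0^0\, e^{-\lambda x}$ as a composition of operators and since multiplication by a real function is self-adjoint, the formal adjoint must satisfy
\[
(\mathcal{L}_0^{\lambda})^* \,=\, e^{-\lambda x}(\mathcal{L}_0^0)^* e^{\lambda x} \,=\, e^{-\lambda x}\mathcal{L}_0^0 e^{\lambda x} \,=\, \mathcal{L}_0^{-\lambda}.
\]
I will also confirm this identity by a direct calculation from $L^*\phi = (a\phi)'' - (b\phi)' + c\phi$ applied to the explicit form of $\mathcal{L}_0^\lambda$ recalled in Section~\ref{ss:standard_prop}; the $D''\phi$ contributions cancel cleanly. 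Let me emphasize that this pleasant identity is specific to the Fickian case: for $q \neq 0$ the adjoint $(\mathcal{L}_q^\lambda)^*$ is \emph{not} of the form $\mathcal{L}_q^{-\lambda}$, because the zero-order term $qD''\phi$ is not reproduced. This mismatch is precisely the reason why the preliminary reduction via Proposition~\ref{prop:compar_k} is essential: it absorbs the $D''$ discrepancy into the growth function $r-h_q$.

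To conclude, let $\phi>0$ and $\psi>0$ be the $1$-periodic principal eigenfunctions of $\mathcal{L}_0^{\lambda}$ and $(\mathcal{L}_0^{\lambda})^* = \mathcal{L}_0^{-\lambda}$, associated with $k_0^\lambda[\tilde r; D]$ and $k_0^{-\lambda}[\tilde r; D]$, respectively. Integrating $\psi\,\mathcal{L}_0^\lambda\phi - \phi\,(\mathcal{L}_0^\lambda)^*\psi$ over one period yields
\[
\bigl(k_0^\lambda[\tilde r; D] - k_0^{-\lambda}[\tilde r; D]\bigr)\int_0^1 \phi\psi \, \de x = 0,
\]
and as $\int_0^1 \phi\psi > 0$, the symmetry $k_0^\lambda[\tilde r; D] = k_0^{-\lambda}[\tilde r; D]$ follows. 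Combining with the first step gives $k_q^\lambda[r;D] = k_q^{-\lambda}[r;D]$ for every $q,\lambda\in\R$, and the equality of the left and right spreading speeds is then immediate from the two Freidlin-Gärtner formulas~\eqref{eq:garfre} and~\eqref{eq:garfre_left}. The only mildly delicate step is the explicit verification of the adjoint identity at $q=0$; the reduction to the Fickian case and the duality pairing are both routine.
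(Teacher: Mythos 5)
Your proof is correct and follows essentially the same route as the paper: the reduction to $q=0$ via Proposition~\ref{prop:compar_k}, and the observation that $(\mcl_0^{\lambda})^*=\mcl_0^{-\lambda}$, which the paper implements as the explicit integration by parts of the two eigenvalue equations against each other's principal eigenfunctions. The only differences are cosmetic (you perform the reduction first and phrase the $q=0$ step in adjoint language), so nothing further is needed.
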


\subsection{(Non-)monotonicity in \texorpdfstring{$q$}{q} and limiting behavior as \texorpdfstring{$q\to\pm\infty$}{}}

We now go back to the case of general $1-$periodic $r\in\mcc^{0,\alpha}(\R)$ and focus on the dependency of $k_q^0[r;D]$ and $c^*_q[r;D]$ on $q$.
Again, Proposition~\ref{ppn:equiv_unif_laplacian} will be central in the proofs. 

\begin{theorem}\label{thm:limits}
  \begin{enumerate}[label=$(\roman*)$]
  \item Assume that all extremal points of $D$ are non-degenerate (i.e. $D''\neq0$ at these points). Let $\underline{\mca}$ be the set of points at which $D$ reaches a local minimum and let $\overline{\mca}$ be the set of points at which $D$ reaches a local maximum. Then,
  as ${q}\to\pm\infty$, the ability of persistence $k_q^0$ converges to the following limits:
    \[\lim_{q\to+\infty}k_q^0[r;D]=\max_{x\in\underline{\mca}}r(x),
    \esp
    \lim_{q\to-\infty}k_q^0[r;D]=\max_{x\in\overline{\mca}}r(x);\] 
  \item Assume that $D$ is not constant. Then, as ${q}\to\pm\infty$, the spreading speed $c^*_q[r;D]$ converges to $0$:
\[\lim_{q\to\pm\infty}c^*_q[r;D]=0.\]  
  \end{enumerate}
  
\end{theorem}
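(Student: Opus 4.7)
The plan is to use Proposition \ref{prop:compar_k} for (i) and Proposition \ref{ppn:equiv_unif_laplacian} for (ii), turning each statement into a semi-classical spectral problem concentrated at the critical points of $D$.

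For (i), rewrite $k_q^0[r;D]=k_0^0[V_q;D]$ via Proposition \ref{prop:compar_k}, where $V_q := r+\tfrac q2 D''-\tfrac{q^2}{4}\tfrac{(D')^2}{D}$. Since $\mcl_0^0[V_q;D]=\dr_x(D\dr_x\cdot)+V_q$ is self-adjoint, its principal eigenvalue admits the variational characterization
\[
k_0^0[V_q;D]=\sup_{\psi\in H^1_{per},\,\|\psi\|_2=1}\int_0^1\bigl[V_q\psi^2-D(\psi')^2\bigr]\de x.
\]
As $|q|\to\infty$, $V_q\to-\infty$ at any point where $D'\neq 0$, while near a non-degenerate critical point $x_0$ of $D$ one has $V_q(x)\approx r(x_0)+\tfrac q2 D''(x_0)-\tfrac{q^2D''(x_0)^2}{4D(x_0)}(x-x_0)^2$. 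For the lower bound I would insert the Gaussian $\psi(x)=\exp(-a(x-x_0)^2/2)$, keep the $D'\psi'$ contribution (of the same order as the leading harmonic potential), and optimize in $a$; the optimum $a_*=|qD''(x_0)|/(2D(x_0))$ yields a Rayleigh quotient converging to $r(x_0)+\tfrac{qD''(x_0)-|qD''(x_0)|}{2}$, which equals $r(x_0)$ exactly when $\operatorname{sgn}(q)=\operatorname{sgn}(D''(x_0))$ and tends to $-\infty$ otherwise. For the upper bound I would use an IMS-type localization: a partition of unity $\{\chi_i^2\}$ subordinated to small neighborhoods of the critical points of $D$ plus a complement decomposes the Rayleigh quotient of the principal eigenfunction, each local contribution is controlled by the harmonic-oscillator eigenvalue above, and the complementary region is pushed to $-\infty$ by the $-\tfrac{q^2}{4}\tfrac{(D')^2}{D}$ term. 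Taking the max over the critical points of the appropriate parity yields the stated limits.

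For (ii), use Proposition \ref{ppn:equiv_unif_laplacian} to write $c_q^*=\scal{\sqrt{D}}_H\inf_{\mu>0}\widehat k_q^\mu/\mu$. A direct computation shows that the substitution $\Phi=e^{s_qP/2}g$ with $g$ periodic, followed by $\tilde g=e^{-\mu y}g$, transforms the eigenvalue equation $\widehat{\mcl}_q^\mu\Phi=\widehat k_q^\mu\Phi$ into the self-adjoint Bloch problem
\[
\tilde g''+W_q\tilde g=\widehat k_q^\mu\tilde g,\quad W_q:=R-\tfrac{s_q}{2}P''-\tfrac{s_q^2}{4}(P')^2,\quad \tilde g(y+L)=e^{-\mu L}\tilde g(y),
\]
with $L=h(1)$. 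For large $|s_q|$, $W_q$ is a deep multi-well potential (wells at the critical points of $P$, i.e.\ of $D$) with barriers of height $\sim s_q^2$; a standard semi-classical / Agmon tight-binding estimate then gives an exponentially narrow top band $\widehat k_q^\mu-\widehat k_q^0\le C_q(\cosh(\mu L)-1)$ with $C_q=O(e^{-\kappa|s_q|})$. Combined with the uniform bound $\widehat k_q^0=k_q^0=O(1)$ (obtained directly via Collatz--Wielandt with the test function $D^{-q}$, for which $\mcl_q^0 D^{-q}/D^{-q}=r$), the infimum of $\widehat k_q^\mu/\mu$ is reached at $\mu_q\sim\ln(1/C_q)/L\sim|s_q|$ and has value $\sim\widehat k_q^0/\mu_q=O(1/|s_q|)\to 0$.

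The main obstacle is the semi-classical analysis underlying both items: the IMS localization in (i) has to quantify that no mass of the eigenfunction leaks across the deepening barriers, and (ii) rests on the exponential tunnelling / band-width estimate. A conceivable shortcut in (ii) avoiding the full tight-binding machinery would be to exhibit, for each $\epsilon>0$, an explicit periodic positive $\phi_q$ and $\lambda_q\to\infty$ such that $\mcl_q^{\lambda_q}\phi_q\le\epsilon\lambda_q\phi_q$ pointwise (Collatz--Wielandt would then give $c_q^*\le\epsilon$ for $|q|$ large); natural candidates are the ground state of $\mcl_q^0$ modulated by a slowly-varying factor $e^{-\lambda_q y}$, but verifying the inequality between the wells ultimately reduces to the same tunnelling estimate.
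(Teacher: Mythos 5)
For item \emph{(i)}, your plan is in substance the same as the paper's: after a change of unknown, both reduce to a self-adjoint Schr\"odinger-type problem whose potential has deep wells only at the critical points of $D$ of the right parity (you reach it via Proposition~\ref{prop:compar_k}, keeping the diffusion $D$; the paper passes through Proposition~\ref{ppn:equiv_unif_laplacian} and a ground-state transform, obtaining the potential $R-\tfrac{s_q}{2}\drift''-\tfrac{s_q^2}{4}(\drift')^2$, and then invokes the proof of \cite{CheLou08} for precisely the concentration/localization analysis that your Gaussian-plus-IMS step is meant to supply). Two caveats: your naive Gaussian insertion leaves error terms of size $|q|^{1/2}$ (from the cubic remainder of $(D')^2/D$ and the variation of $D''$ over the scale $|q|^{-1/2}$), which do not obviously vanish after the two $O(|q|)$ terms cancel; the lower bound is cleaner with the test function $\varphi\propto D^{-q/2}\chi$, $\chi$ a cutoff near the critical point, in the variational formula of Lemma~\ref{lem:variational}, where Laplace's method gives $r(x_0)$ directly. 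The upper bound (IMS with shrinking partition scale and control of the localization error against the growing barrier) is where the real work lies, and it is exactly the part the paper outsources to \cite{CheLou08}; so item \emph{(i)} is acceptable only modulo that reference or a complete localization argument.

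For item \emph{(ii)} your route is genuinely different from the paper's, and it contains a genuine gap. Your reduction is correct as far as it goes: the computation of $W_q$ checks out, the bound $k_q^0\le\max r$ via the exact identity $\mcl_q^0 D^{-q}=rD^{-q}$ is valid, and if one knew that $\widehat k_q^\mu-\widehat k_q^0\le C_q(\cosh(\mu L)-1)$ with $C_q\to0$ exponentially (or merely that the real Floquet exponent at energy $\widehat k_q^0+1$ tends to $+\infty$ as $|s_q|\to\infty$), the conclusion would follow. But that estimate \emph{is} the proof, and it is not an off-the-shelf tight-binding statement: the potential $W_q$ depends on the large parameter, its wells sit at height $O(s_q)$ while the barriers are $O(s_q^2)$ below, and you need a lower bound on the Hill discriminant (equivalently on the Lyapunov exponent above the top of the spectrum) over an energy window of size $O(1)$, uniformly in $s_q$, with turning points at the wells. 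That is a nontrivial WKB/Agmon argument which you acknowledge but do not supply, and, as you note yourself, your Collatz--Wielandt shortcut reduces to the same estimate. The paper sidesteps this entirely by staying with the linear Cauchy problem: it passes to the adjoint equation, applies the Feynman--Kac formula to bound the solution along the ray $y=\gamma t$ by $e^{r_mt}\,\mathbb{P}_{\gamma t}(X_t\le 0)$ for a diffusion with drift $s_q\drift'$, decomposes the descent into i.i.d.\ unit-interval crossing times, and uses Markov's inequality together with the soft exit-time result (Proposition~\ref{ppn:exit_time}, proved with only the H\"older regularity of Brownian motion) to show the relevant exponential moment vanishes as $|s_q|\to\infty$. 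In other words, the probabilistic argument is the device that replaces the tunnelling estimate your plan leaves open; as written, your item \emph{(ii)} is a plausible but incomplete program.
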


Let us comment on this theorem. The limit as $q\to\pm\infty$ of $k_q^0[r;D]$ is similar to \cite[Theorem 1]{CheLou08}, which dealt with an eigenvalue problem with Neumann boundary condition (instead of periodic boundary conditions). Our proof share similarities with their proof, but has been adapted to the periodic setting.
The second item seems quite surprising, especially when it is compared to the first item: namely, we can have
\[\ds\liminf_{q\to\pm\infty}k_q^0[r;D]>0\]
but
\[\lim_{q\to\pm\infty}c_q^*[r;D]=0.\]
The second item highlights a fundamental difference between homogeneous and heterogeneous diffusion, as for constant $D$, the spreading speed $c_q^*[r;D]$ is independent of~$q$.

The first ingredient of the second item is the \emph{Feynman-Kac formula}.
Let $\overline{u}$ be the solution to the \emph{linear adjoint} Cauchy problem
\begin{equation*}
\p_t \overline{u}(t,x) = D(x)\,\p_{xx} \overline{u} +b(x)\, \dr_x\overline{u}+ r(x)\overline{u}, \quad t > 0, \ x \in \mathbb{R},  
\end{equation*}
with $\overline{u}(0,\cdot)=u_0$ (where $b\in\mcc^{1,\alpha}(\R)$ is also $1-$periodic).
The Feynman-Kac formula implies that $\overline{u}$ can be expressed in terms of a stochastic process $(X_t)_{t\geq0}$ as: 
\begin{equation}\label{eq:feynman_kac_formula}
  \overline{u}(t,x)=\mathbb{E}_x\cro{u_0(X_t)\exp\pth{\int_0^tr(X_{t-s})\de s}}.
\end{equation}
The process $(X_t)_{t\geq0}$ under the expectation $\mathbb{E}_x$ is defined by:
\[\de X_t=b(X_t)\de t+\sqrt{2D(X_t)}\de W_t,\esp X_0=x,\]
where $(W_t)_{t\geq0}$ is a standard Brownian motion.

The second ingredient for the proof of the second item of Theorem~\ref{thm:limits} is the following proposition, which deals with the exit time of an interval by the process~$(X_t)_{t\geq0}$ when the intensity of the drift~$b$ goes to infinity.
We state the result for general Lipschitz drift term~$b$. This regularity assumption ensures the well-posedness of the equation.

\begin{proposition}\label{ppn:exit_time}
  Let $b\in\mcc^{0,1}(\R)$ be a Lipschitz function such that there exists $x_0\in[0,1]$ with $b(x_0)>0$.
  For $s\in\R$, let $(X^s_t)_{t\geq 0}$ be the unique solution of
  \[\de X^s_t=sb(X^s_t)\de t+\sqrt{2}\de W_t, \qquad X_0=1.\]
  Let $\tau^s:={\inf\acc{t\geq 0\ /\ X^s_t\leq 0}}$ be the exit time from $(0,+\infty)$ of $(X^s_t)_{t\geq0}$.
  Then, for all $a>0$, as $s\to+\infty$,
  \[\mathbb{P}\pth{\tau^s\leq a}\to 0.\]
\end{proposition}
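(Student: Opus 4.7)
The approach is to reinterpret the large-drift limit $s\to+\infty$ as a small-noise limit and invoke a Freidlin--Wentzell (Kramers) exit-time estimate. Setting $\epsilon := \sqrt{2/s}$ and rescaling time via $\tilde X^\epsilon_\tau := X^s_{\tau/s}$, the scaling property of Brownian motion gives that $\tilde X^\epsilon$ solves $\de\tilde X^\epsilon_\tau = b(\tilde X^\epsilon_\tau)\de\tau + \epsilon\,\de\widetilde W_\tau$ with $\tilde X^\epsilon_0 = 1$, and its exit time $\tilde\tau^\epsilon$ from $(0,+\infty)$ satisfies $\tilde\tau^\epsilon = s\tau^s$. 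The conclusion is then equivalent to $\mathbb{P}(\tilde\tau^\epsilon \le 2a/\epsilon^2)\to 0$ as $\epsilon\to 0^+$; that is, the exit time of $\tilde X^\epsilon$ must beat a horizon that only grows polynomially in $1/\epsilon$.

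The next step is to analyse the unperturbed flow $\dot y = b(y)$, $y(0)=1$. Using the Lipschitz regularity of $b$ and the hypothesis $b(x_0)>0$, I would verify that the trajectory stays in a compact subset of $(0,+\infty)$: if $b(1)\ge 0$ the flow is nondecreasing and bounded below by $1$; if $b(1)<0$ it converges monotonically to the largest zero $y^\star<1$ of $b$, which exists by the intermediate value theorem applied on $[x_0,1]$ and satisfies $y^\star\ge x_0\ge 0$, with $y^\star>0$ strictly (otherwise $y^\star=x_0=0$ forces $b(x_0)=b(y^\star)=0$, contradicting the hypothesis). Classical Freidlin--Wentzell theory for small random perturbations of a dynamical system then yields constants $V>0$ and $C>0$ such that, for every $T\ge 0$,
\[\mathbb{P}\bigl(\tilde\tau^\epsilon \le T\bigr) \le C(1+T)\exp(-V/\epsilon^2).\]
Applying this with $T = 2a/\epsilon^2 = sa$ gives $\mathbb{P}(\tau^s\le a) \le C(1+sa)\exp(-Vs/2)\to 0$, which is the desired conclusion.

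The main obstacle is to make the Freidlin--Wentzell bound rigorous on the unbounded domain $(0,+\infty)$, since a priori the process could wander far to the right into regions where the dynamics is less controlled. A clean workaround is to truncate: compare $\tilde X^\epsilon$ to its version stopped at some large $M$, use a scale-function (or Gronwall) estimate to show that the probability of reaching $M$ before the horizon is negligible, and then apply the standard Freidlin--Wentzell exit bound on the compact interval $(0,M)$. In dimension one an alternative route avoids this machinery entirely: one can build by hand a supersolution of the Kolmogorov backward equation for $\tilde X^\epsilon$ from the scale function $x \mapsto \int_0^x e^{-sB(y)}\,\de y$ (with $B$ an antiderivative of $b$), and conclude via the optional stopping theorem.
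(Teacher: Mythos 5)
Your reduction to the small-noise problem is correct, and the intuition (exit at $0$ forces the rescaled process to cross, against the drift, an interval $I\subset(0,1)$ where $b\ge\delta>0$, at cost $e^{-V/\eps^2}$ per attempt, while the horizon $2a/\eps^2$ grows only polynomially) is the right one. But the estimate you lean on is not available as a citation: the bound $\mathbb{P}(\tilde\tau^\eps\le T)\le C(1+T)e^{-V/\eps^2}$, uniform in $T$, is not what classical Freidlin--Wentzell theory provides in this setting. The hypotheses here are very weak — $b$ is only assumed positive at one point of $[0,1]$; it may be negative near $0$ (so the flow there pushes \emph{toward} the exit boundary), negative on part of $(x_0,1)$, and arbitrary on $(1,\infty)$ — so $(0,+\infty)$, or any bounded truncation of it, need not be attracted to a unique stable equilibrium, which is the standing assumption of the standard exit-time theorems. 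What the argument actually requires is a bound on the probability of a downward crossing of $I$ within time $T=2a/\eps^2$, uniform over starting points above $I$ and over the (random, possibly many) successive crossing attempts; converting ``exponentially small per attempt'' into ``small over a horizon of length $sa$'' needs a renewal/iteration argument (or a direct pathwise estimate), and that is precisely the content missing from your sketch.

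Both of your proposed repairs also fail as stated. The truncation step ``the probability of reaching $M$ before the horizon is negligible'' is false in general: if, say, $b\ge c>0$ on $[1,\infty)$, the rescaled process reaches any fixed $M$ in time of order $M/c$, far before $2a/\eps^2$; and reaching $M$ is not the danger anyway — the danger is that after such excursions the process returns and makes further attempts at crossing $I$, which is exactly what the missing renewal structure must control. The scale-function alternative does not suffice either: $S(x)=\int_0^x e^{-sB(y)}\,\de y$ controls $\mathbb{P}_1(\tau_0<\tau_M)$, which can be close to $1$ (take $b$ strongly negative on $(\sup I,\infty)$), whereas the proposition is a statement about the \emph{time} to reach $0$; a genuine parabolic supersolution incorporating the time variable would have to be exhibited, and none is. For comparison, the paper's proof bypasses all this machinery: any path reaching $0$ by time $a$ must cross $I$ downward in some window $[\sigma-h,\sigma]\subset[0,a]$, which forces a Brownian increment $\sqrt{2}\,(W_\sigma-W_{\sigma-h})\le -s\delta h-\abs{I}$, and this is ruled out uniformly in $h\in(0,a]$ for large $s$ by the almost-sure H\"older-type modulus of continuity of $W$ on $[0,a]$ — an elementary estimate needing no attractor structure, no truncation, and no large-deviation input. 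I would encourage you either to adopt such a direct pathwise estimate, or to carry out honestly the crossing-attempt iteration your large-deviation strategy implicitly relies on.
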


\medskip

Let us now state our last main result.
The simulations in Section \ref{sec:num} suggest that $q\mapsto k_q^0[r;D]$ is monotonic.
In fact, the monotonicity of $q\mapsto k_q^0[r;D]$ depends on the situation and does not hold in general. Finally, the monotonicity of  $q\mapsto c^*_q[r;D]$  never holds if spreading can occur  and~$D$ is nonconstant. 

\begin{theorem}\label{thm:monotonicity}
  Let $\kappa \,:\, q\mapsto k_q^0[r;D]$. We have:
  \begin{enumerate}[label=(\roman*)]
  \item If $r$ is a constant, then $\kappa(q)=r$ for all $q\in\R$;
  \item 
    For this item, assume the following: 
    \begin{enumerate}
    \item $D$ and $r$ are monotonic on $[0,1/2]$, $1-$periodic and even (thus $D$ and $r$ are also monotonic on $[1/2,1]$);
    \item $D\in\mcc^{3}(\R)$ and $r\in\mcc^4(\R)$; 
    \item $r'\neq 0$ on $(0,1/2)$; $r''(0)\neq 0$ and $r''(1/2)\neq 0$. 
    \end{enumerate}
    Then $\kappa$ is monotonic. If $D$ and $r$ have the same monotonicity on $[0,1/2]$, then $\kappa$ is nonincreasing. If $D$ and $r$ have opposite monotonicity, then $\kappa$ is nondecreasing;
  \item On the other hand, there exist $r$ and $D$ such that $\kappa$ is not monotonic  (and nonconstant);
  \item Assume that  $D$ is not constant and that  there exists $q\in\R$ such that $k_q^0>0$. Then the function $q\mapsto c^*_q[r;D]$ is not monotonic.
  \end{enumerate}
\end{theorem}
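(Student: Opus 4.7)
I would tackle the four items from easiest to hardest. For item~(i), the ansatz $\varphi(x):=D(x)^{-q}$ satisfies $\dr_x(D^q\varphi)=\dr_x(1)=0$, so $\mcl_q^0\varphi=r\varphi$ whenever $r$ is constant; as $\varphi$ is positive and $1$-periodic, uniqueness of the principal eigenpair yields $\kappa(q)=r$ for every $q\in\R$. Item~(iv) is almost as short: the assumption $k_{q_0}^0[r;D]>0$ gives $c_{q_0}^*[r;D]>0$ by Freidlin-G\"artner, while Theorem~\ref{thm:limits}(ii) gives $\lim_{q\to\pm\infty}c_q^*[r;D]=0$ since $D$ is non-constant. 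A monotonic function on $\R$ with both one-sided limits equal to $0$ vanishes identically, contradicting $c_{q_0}^*[r;D]>0$; hence $q\mapsto c_q^*[r;D]$ is not monotonic.

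For item~(iii), I would produce an explicit counterexample driven by Theorem~\ref{thm:limits}(i). Take $D(x)=2+\cos(2\pi x)$, whose only extrema on $[0,1)$ are a non-degenerate maximum at $0$ and a non-degenerate minimum at $1/2$, and $r(x)=1-\cos(4\pi x)$, which is $1$-periodic, non-negative, vanishes at $0$ and $1/2$, and satisfies $\int_0^1 r=1$. Theorem~\ref{thm:limits}(i) then gives $\lim_{q\to\pm\infty}\kappa(q)=r(0)=r(1/2)=0$. On the other hand, $\mcl_0^0[r;D]$ is self-adjoint in $L^2(0,1)$, and testing its Rayleigh quotient with $\varphi\equiv 1$ gives $\kappa(0)\ge\int_0^1 r=1$. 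Hence $\kappa$ takes a value strictly larger than both of its limits at infinity, so it is non-monotonic and non-constant.

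Item~(ii) is the delicate one. The plan is to use Kato-type perturbation. Let $\varphi_q$ and $\psi_q$ be the $1$-periodic positive principal eigenfunctions of $\mcl_q^0$ and of its adjoint. A direct computation shows $\dr_q\mcl_q^0\varphi=(D'\varphi)'$, and standard eigenvalue perturbation followed by one integration by parts gives
\[
\kappa'(q)=-\frac{\int_0^1\psi_q'\,D'\,\varphi_q\,dx}{\int_0^1\psi_q\varphi_q\,dx}.
\]
Under assumption~(a), both $\mcl_q^0$ and its adjoint commute with the reflection $x\mapsto -x$, so by uniqueness $\varphi_q$ and $\psi_q$ are even $1$-periodic (hence $\psi_q'(0)=\psi_q'(1/2)=0$); since $D'$ is odd, the integrand is even and the integral equals $2\int_0^{1/2}\psi_q'D'\varphi_q\,dx$. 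On $(0,1/2)$, $D'$ has constant sign by monotonicity of $D$, so the sign of $\kappa'(q)$ is controlled by that of $\psi_q'$ on $(0,1/2)$. The claim is that $\mathrm{sgn}(\psi_q')=\mathrm{sgn}(r')$ on this interval; this yields $\kappa'(q)<0$ when $D$ and $r$ share monotonicity on $[0,1/2]$ and $\kappa'(q)>0$ in the opposite case, which is exactly what (ii) asserts.

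The main obstacle is this last sign claim. Differentiating the adjoint eigenvalue equation $(\mcl_q^0)^*\psi_q=\kappa\psi_q$, the function $w:=\psi_q'$ solves on $(0,1/2)$ a linear second-order ODE whose inhomogeneous term is $-r'\psi_q$, with Dirichlet-type boundary conditions $w(0)=w(1/2)=0$ forced by evenness. Assumption~(c) makes the source term have constant sign opposite to $r'$, and the non-degeneracy $r''(0),r''(1/2)\ne 0$ together with the regularity hypothesis~(b) controls the leading-order behavior of $w$ near the endpoints. A comparison/maximum-principle argument tailored to this BVP should then rule out interior zeroes of $w$ and yield $\mathrm{sgn}(w)=\mathrm{sgn}(r')$ on $(0,1/2)$. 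Making this argument robust uniformly in $q\in\R$ is what I expect to be the technical heart of the proof.
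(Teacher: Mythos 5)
Items (i), (iii) and (iv) of your proposal are correct. For (i), exhibiting the explicit eigenfunction $D^{-q}$ is a valid (and slightly different) alternative to the paper's argument, which simply integrates the eigenvalue equation over a period. For (iv) you argue exactly as the paper does, from Theorem~\ref{thm:limits}~$(ii)$. For (iii) your explicit pair $D(x)=2+\cos(2\pi x)$, $r(x)=1-\cos(4\pi x)$, combined with Theorem~\ref{thm:limits}~$(i)$ and the Rayleigh quotient at $q=0$ (Lemma~\ref{lem:variational} with $\varphi\equiv1$), is a clean smooth counterexample; the paper instead takes a near-indicator $r_\eps$ supported near both extrema of $D$ and lets $\eps\to0$, but your construction achieves the same conclusion and is, if anything, more elementary.

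The genuine gap is item (ii), which is the substantive part of the theorem, and your proposal does not prove it. The Kato perturbation formula $\kappa'(q)=-\int_0^1\psi_q' D'\varphi_q\,/\int_0^1\psi_q\varphi_q$ and the evenness reduction are fine, but everything then hinges on the claim $\mathrm{sgn}(\psi_q')=\mathrm{sgn}(r')$ on $(0,1/2)$ for \emph{every} $q\in\R$, which you assert rather than establish. The route you sketch is problematic: differentiating the adjoint eigenvalue equation, $w=\psi_q'$ satisfies a second-order ODE whose zero-order coefficient contains $r-\kappa(q)$ together with a term proportional to $qD''$ (or $s_q\drift''$ after the change of variables), and this coefficient has no sign in general, so the maximum principle does not apply directly; ruling out interior zeros of $w$, uniformly as $|q|\to\infty$ where the drift $(1-q)D'$ dominates and the adjoint eigenfunction develops boundary-layer behavior near the extrema of $D$, is precisely the hard point, and it is not clear your sign claim survives in that regime without a genuinely new argument. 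The paper avoids any eigenfunction sign analysis altogether: after transforming to constant diffusion (Proposition~\ref{ppn:equiv_unif_laplacian}), it characterizes $\widehat k_q^0$ through the Feynman--Kac formula as the exponential growth rate of $\mathbb{E}_x[\exp\int_0^t R(X^i_s)\de s]$, uses the symmetry and monotonicity hypotheses (a)--(c) to show that $R(X^i_t)$ is itself an autonomous one-dimensional diffusion whose drift $s^i b+B$ is monotone in $s^i$ and whose coefficients are Lipschitz (this is where (b) and (c) are really used), and then concludes by the pathwise comparison theorem of Revuz--Yor. As it stands, your item (ii) is a plan with its central lemma missing, not a proof.
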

Item $(i)$ of this theorem is standard. Item $(ii)$ shows, similarly to Proposition~\ref{prop:largeD} but in greater details, that increasing $q$ (for instance by shifting from Fickian to Fokker-Planck diffusion) enhances the ability of persistence when $r$  and $D$ are in phase, and decreases it when $r$ and $D$ are out of phase. It is proved by using the Feynman-Kac formula.
The third item is proved by constructing an explicit counterexample.
The fourth item is proved by using Theorem~\ref{thm:limits}.

\subsection{Numerical simulations \label{sec:num}}

We use standard matrix methods in Python (SciPy and ARPACK libraries using the eigs function) together with a finite difference approximation to determine the principal eigenvalue $k_q^\lambda[r;D]$ for discrete values of $\lambda$. This information is then used in conjunction with the Freidlin-Gärtner formula~\eqref{eq:garfre} to yield an approximate value of  $c_q^*[r;D]$. A Jupyter notebook is available at  \href{https://doi.org/10.17605/OSF.IO/GDQVP}{DOI: 10.17605/OSF.IO/GDQVP}. 
This method is expected to be more reliable than the direct simulation of the solution to \eqref{eq:main_KPP} conducted in previous studies (\cite{KinKaw06} for $q=0$ and \cite{BenMal16} for $q\in [0,1]$) for multiple reasons: $(i)$ it does not require a finite domain approximation since the eigenvalue problem is solved over one period; $(ii)$ it avoids reliance on a finite time approximation to estimate the spreading speed; $(iii)$ it circumvents the challenges posed by nonlinearity (the eigenvalue problem defining $k_q^{\lambda}[r;D]$ is linear).

\paragraph{Constant $r$ case.} In the computations presented in Fig.~\ref{fig:speed_vs_q}, we begin by examining the dependency of $c_q^*[r;D]$ on $q$, where $r\equiv r_0$ is a constant. As expected from the result of Theorem~\ref{thm:r_constant},  we find that the speed $q\mapsto c_q^*[r;D]$ is symmetric with respect to $q=1/2$, and is maximal at $q=1/2$. 
This is also consistent with previous results in~\cite{BenMal16} for $q\in [0,1]$, where $q\mapsto c_q^*[r;D]$ is close to its maximum and therefore exhibits a parabolic shape. Outside of this range, we observe that $c_q^*[r,D]$ decays, and converges to $0$ (we get $c_q^*[r,D]\approx 0.01$ with $q=50$) as $q\to \pm \infty$, which is consistent with the result of Theorem~\ref{thm:limits}.

\paragraph{Effect of a phase shift between $r$ and $D$.} Next, we explore how $k_q^0[r;D]$ (the principal eigenvalue determining the ability of persistence)  and the spreading speed $c^*_q[r;D]$ vary with a phase shift $\omega$ between $r$ and $D$, see Fig.~\ref{fig:speed_vs_omega}. More precisely, we set $D(x)=C + r(x+\omega)$, for some constant $C$ ensuring $D>0$, with $\omega \in [0,1]$. 
Here, $\omega=0$ means that $r$ and $D$ are \enquote{in phase} and $\omega=1/2$ means that $r$ and $D$ are \enquote{out of phase}.  
We consider six values of $q$: $q=0$ (Fick), $q=1/2$ (Stratonovich),  $q=1$ (Fokker-Planck), $q=2$, $q=-1$ (called \textit{attractive dispersal} or AD in \cite{PotSch14}), and $q=-1/2$. As expected from Proposition~\ref{prop:largeD} and Theorem~\ref{thm:limits} $(i)$, the effect of the phase shift $\omega$ on $k_q^0[r;D]$ (and therefore on $c^*_q[r;D]$) depends strongly on $q$.

For $q\ge 0$, consistently with previous findings (\cite{KinKaw06} in the Fickian case, see also Theorem~5.1 in \cite{Nad10}), the highest speeds and highest values of $k_q^0$ are achieved when $r$ and $D$ are out of phase ($\omega=1/2$). In other terms, $k_q^0[r;D]$ and $c^*_q[r;D]$ tend to increase with $\omega \in (0,0.5)$. However, this effect is mild for $q=0$, as $k_0^0[r;D]$ appears to be  almost  independent of~$\omega$   (with only minor numerical variations observed).  
The amplitude of the effect of $\omega$ seems to increase with $q$.  This can be interpreted as follows: when $q>0$, the term  $q \partial_x \left( D'(x) u \right)$ in~\eqref{eq:main_KPP_bis} causes individuals to concentrate around regions where $D(x)$ is minimal. This concentration can enhance persistence ($k_q^0[r;D]$) if the minimum of $D(x)$ coincides with the maximum of $r(x)$ ($\omega=1/2$).  When $q$ is large, this effect is amplified, as the advection term dominates, further concentrating individuals in favorable regions.

For $q<0$, such as $q = -1$ and $q = -1/2$, the behavior changes significantly. In these cases, $k_q^0$ and $c^*_q$ tend to decrease with $\omega \in (0,0.5)$. Therefore, the highest speeds and highest values of $k_q^0$ are achieved when $r$ and $D$ are in phase ($\omega=0$). This time, the term  $q \partial_x \left( D'(x) u \right)$ in~\eqref{eq:main_KPP_bis} causes individuals to concentrate in regions where $D(x)$ is maximal, allowing them to better exploit $r(x)$ when its maximum coincides with that of $D$.

\paragraph{Effect of $q$.} We observe that, for each fixed value of $\omega$, the function $q \mapsto k_q^0$ is monotonic (Fig.~\ref{fig:speed_vs_omega}, left). This behavior was predicted by Theorem~\ref{thm:monotonicity}~\emph{(ii)} in the cases $\omega = 0$ and $\omega = 1/2$. Interestingly, and in agreement with the result of Theorem~\ref{thm:monotonicity}~\emph{(iv)}, this monotonicity property no longer holds for $q \mapsto c^*_q$, even in the cases $\omega = 0$ and $\omega = 1/2$.

\paragraph{Optimization of the diffusion coefficient.} When $r$ and $D$ are in phase ($\omega=0$), Fickian diffusion results in higher speeds than other diffusion models (Fig.~\ref{fig:speed_vs_omega}, right). Conversely,  when $r$ and $D$ are out of phase ($\omega=1/2$), Fokker-Planck diffusion leads to the fastest speeds, compared to $q=0$ and even $q=1/2$. To gain insight into the specific interactions between $r$ and $D$ that favor either Fickian or Fokker-Planck diffusion relative to each other, we conducted an optimization procedure. Specifically, we set $r(x) =\cos^2(\pi x)$ and optimized the ratio $c_0^*[r,D] / c_1^*[r,D]$ (respectively, $c_1^*[r,D] / c_0^*[r,D]$) with respect to $D$.
The optimization is performed using a simulated annealing algorithm, which iteratively modifies the function $D$. This function is constructed as a smooth, periodic function using cubic splines, based on a vector of four points whose values lie within the range $(0.1, 1)$. These points correspond to the positions $x = 0$, $x = 1/3$, $x = 2/3$, and $x = 1$, with the first and last values necessarily being identical to ensure periodicity. The optimization process directly operates on this vector of four points to determine the optimal form of $D$. 
See Fig.~\ref{fig:optim}. We observe that the function $D$ maximizing the ratio $c_0^*[r,D] / c_1^*[r,D]$ is in phase with $r$, yielding a ratio of approximately $1.43$. Conversely, the function $D$ that maximizes the ratio $c_1^*[r,D] / c_0^*[r,D]$ is out of phase with $r$, resulting in a ratio of approximately $1.15$.

\begin{figure}
\begin{center}
\includegraphics[scale=0.5]{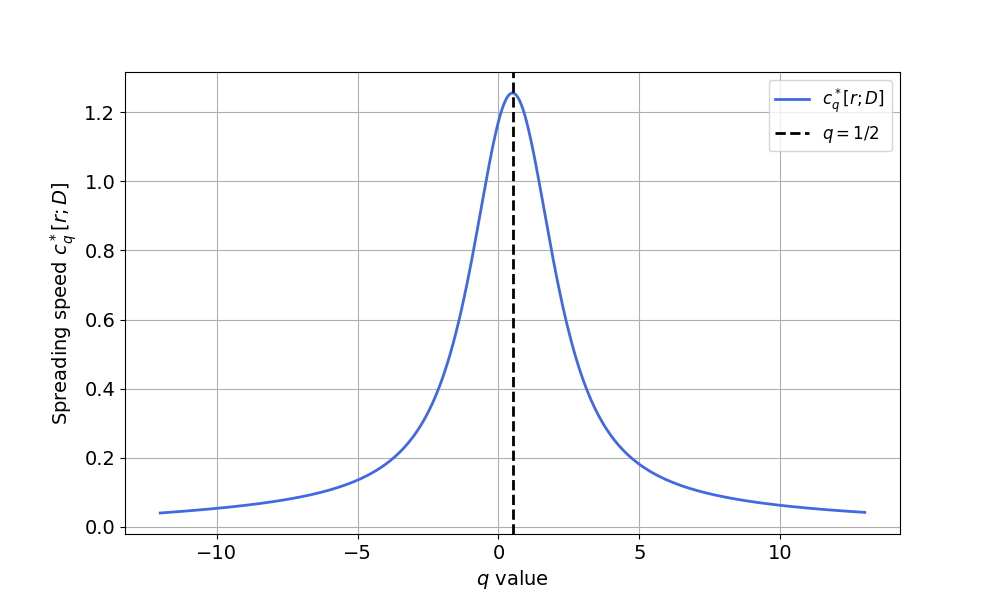}
\end{center}
\caption{Spreading speed $c^*_q[r;D]$ depending on $q$, with a constant growth term $r\equiv 1.$ Here, $D(x)=0.1+ \cos^2(\pi \, x).$}
\label{fig:speed_vs_q}
\end{figure}

\begin{figure}
\begin{center}
\includegraphics[scale=0.3]{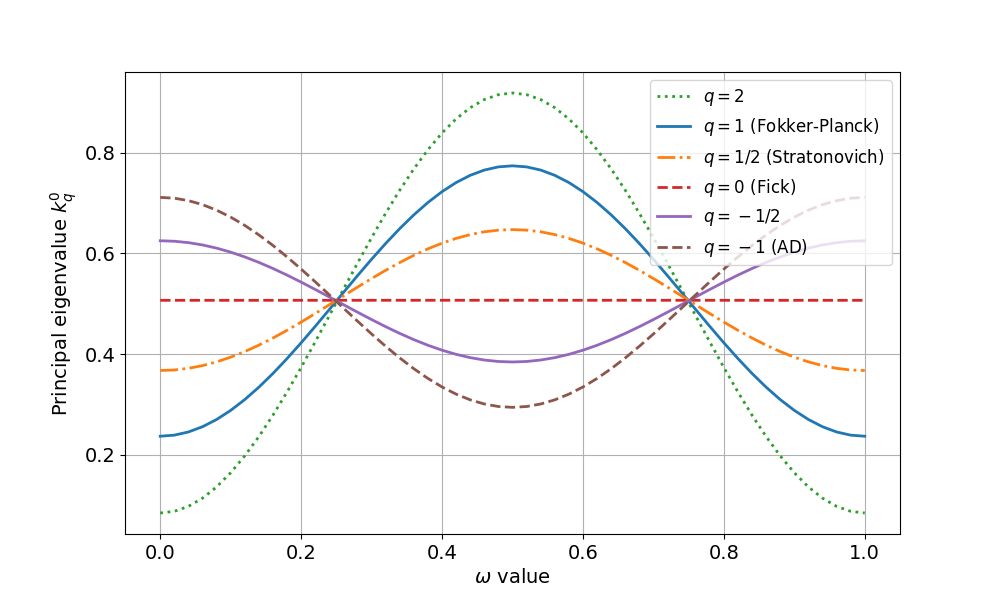}
\includegraphics[scale=0.3]{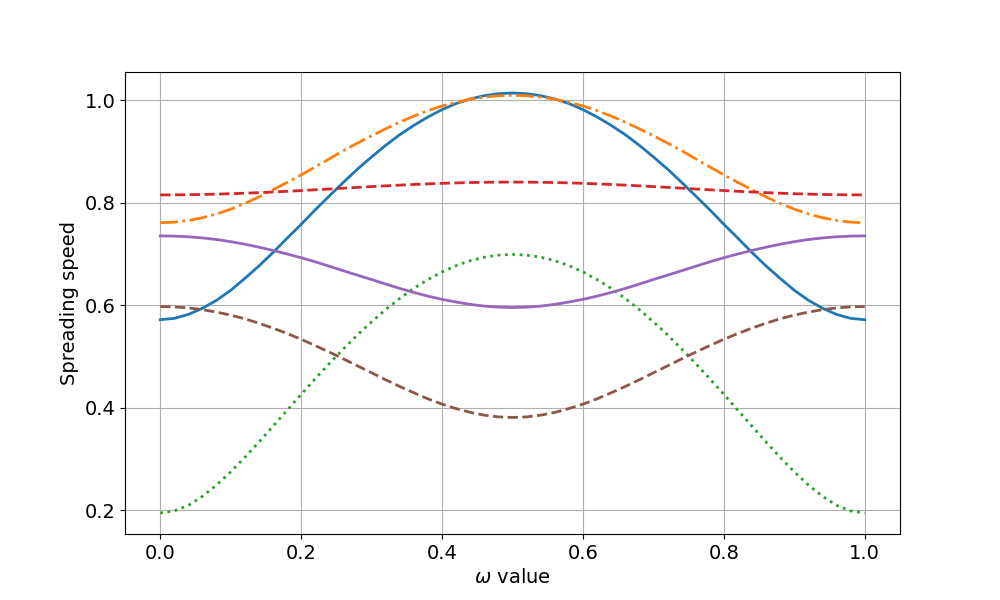}
\end{center}
\caption{Principal eigenvalue $k_q^0$ and spreading speed $c^*_q[r;D]$ depending on the phase shift $\omega$ between $r$ and $D$. Here, $r(x)=\cos^2(\pi \, x)$ and $D(x)=0.1+ \cos^2(\pi \, (x+\omega))$.}
\label{fig:speed_vs_omega}
\end{figure}

\begin{figure}
\begin{center}
\includegraphics[scale=0.5]{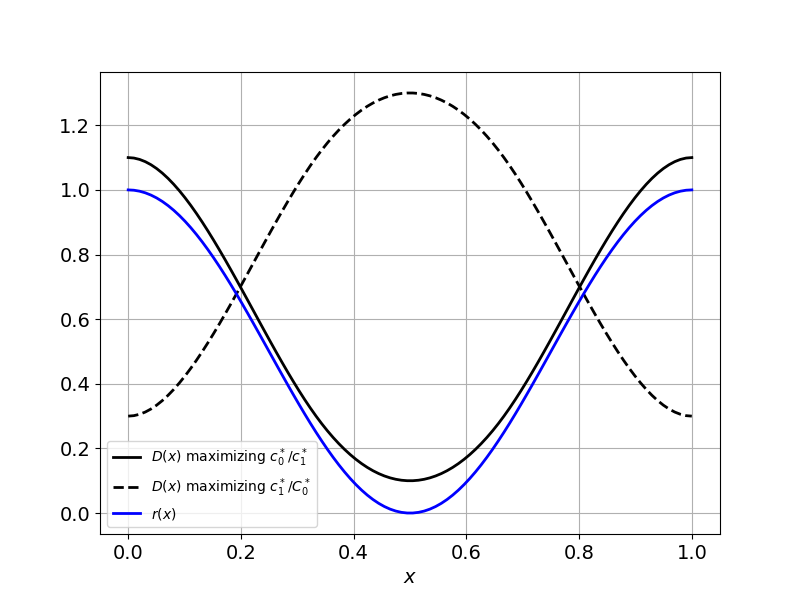}
\end{center}
\caption{Functions $D$ optimizing the ratio $c_0^*[r,D] / c_1^*[r,D]$ (respectively, $c_1^*[r,D] / c_0^*[r,D]$) with $r$ fixed to $r(x)=\cos^2(\pi \, x)$. The optimization is based on a simulated annealing algorithm. The function $D$ is interpolated from four discrete points using cubic splines.}
\label{fig:optim}
\end{figure}

\section{Discussion}\label{s:discussion}

\paragraph{General conclusions.} The comparative analysis of Fickian, Fokker-Planck and other $q$-diffusion models in this study shows that
the choice between these diffusion models is not merely a matter of mathematical preference,
but has profound implications for the understanding of spatial dynamics in heterogeneous environments.
Our investigation underscores the sensitivity of both persistence and spreading properties to the diffusion model employed,
and that there is no general rule for the comparative behavior of persistence and spreading speeds for different values of $q$ across all settings.
In particular, Theorem~\ref{thm:choose_r_d} shows that for any $q\neq0$, suitable configurations of growth rates $r$ and diffusion coefficients $D$ can be found such that the $q$-diffusion either increases or decreases the ability of persistence and the spreading speed with respect to the traditional Fickian diffusion.

\paragraph{The case of constant $r$.} A somewhat unexpected finding emerges when the growth rate $r$ is constant. Despite the heterogeneity of $D$, Fickian ($q=0$) and Fokker-Planck ($q=1$) diffusion models yield equal spreading speeds. 
This result is a consequence of an underlying symmetry in the function $q \mapsto c^*_q$ around $q=1/2$; Fickian and Fokker-Planck diffusions represent a special case of symmetry. 
Moreover, the maximal spreading speed is achieved with Stratonovich diffusion ($q=1/2$). We derive an explicit formula for the maximal spreading speed, and we point out that this maximal spreading speed is equal to the already-known limit of the spreading speed for Fickian diffusion as the period of~$D$ goes to infinity (in this work, the period is mostly kept equal to~$1$, but the same properties hold for any period, as long as~$r$ and~$D$ have the same period).
Furthermore, we prove that $c_{1/2}^*$ does not depend on the period of $D$ when $r$ is constant.

Although the spreading speeds are equal for Fickian and Fokker-Planck diffusion, it is important to note that the shapes of the solutions can differ significantly. For instance, if~$f(x,u) = u(1-u)$ in~\eqref{eq:main_KPP}, the positive equilibrium solution is $u \equiv 1$ for Fickian diffusion, whereas it is necessarily non-constant with Fokker-Planck diffusion if~$D$ is not constant.

\paragraph{Effects of the interactions between $r$ and $D$.} Our analytical results on persistence (Proposition~\ref{prop:largeD}, Theorem~\ref{thm:limits} $(i)$, Theorem~\ref{thm:monotonicity} $(ii)$) and numerical findings on persistence and spreading (Figs.~\ref{fig:speed_vs_omega} and~\ref{fig:optim})
show that, for $q \geq 0$, the maximum ability of persistence and the maximum spreading speed are achieved when $r$ and $D$ are out of phase (i.e., $r$ is large where $D$ is small).  
Biologically, this is relevant and consistent with the existing literature (\cite{KinKaw06}, case $q = 0$): organisms tend 
to remain in favorable environments and leave unfavorable ones. Even when the direction toward a better environment is unknown, because the individuals have access only to very local information, organisms still recognize the necessity of leaving unfavorable areas (this situation could correspond to $q=1/2$ in the case of variable jump length or $q=1$ for variable jump rate, as discussed below).

 On the other hand, our results reveal reversed outcomes regarding phase shift effects when $q<0$. 
Moreover, the amplitude of speed variation with respect to phase shifts is significantly more pronounced in the Fokker-Planck case ($q = 1$) than in the Fickian case ($q = 0$), highlighting important ecological implications. Specifically, in the context of ecological or epidemiological modeling, the strategic placement of dispersal barriers, which reduce $D$ in regions of high or low $r$, is likely to have a much stronger impact under Fokker-Planck diffusion than under Fickian diffusion. More precisely, in order to prevent or slow down an invasion, our results indicate that dispersal barriers should be placed where $r$ is small when $q > 0$, where $r$ is large when $q < 0$, and that the position of these barriers will have little effect when $q$ is close to $0$.

\paragraph{How to choose $q$ in real-world applications?} 
While this question goes beyond the scope of this paper and has already been partly addressed in \cite{AlfGil22,Vis08}, we recall that, as mentioned in the Introduction, when the $q$-diffusion is derived from a microscopic probabilistic model, the value of $q$ depends on two factors: the influence of spatial heterogeneity (e.g.,  the jump rate in \cite{Oku86} or the jump length in \cite{Vis08}) and the chosen reference point. Here, for a jump between $X_t$ and $X_{t+\delta t}$, the reference point is the point~$Y_t$ on the line going through $X_t$ and $X_{t+\delta t}$ where the heterogeneity $D(Y_t)$ (or $\sigma(Y_t)$, following the notations in the Introduction) is evaluated.

Thus, while a middle reference point (i.e.\ $Y_t=(X_t+X_{t+\delta})/2$) leads to $q=1/2$ in the construction based on jump length, the same middle reference point leads to $q=0$ when the construction is based on jump rate~\cite{Oku86}. In practice, depending on the physical or biological system under consideration, the most relevant reference point may vary according to the factor being considered. For example, in some biological contexts where individuals have access only to local information, the jump rate should depend on the starting point, leading to $q=1$; on the other hand, the jump length might depend on the information they gather during their movement and thus depend, e.g., on the midpoint, leading to $q=1/2$.

In real diffusion phenomena, the effective value of~$q$ could be estimated directly from observation data, using a PDE with $q$-diffusion combined with a statistical estimation procedure, as in~\cite{KimLee24}. 
In this case, assuming that the reference point associated with each type of factor is known, estimating the value of $q$ can provide insight into which factor is influenced by spatial heterogeneity. It should be noted that the value of $q$ may also result from a weighted contribution of several factors, each with a different reference point. For example, in \cite{KimLee24}, the authors find an experimental value of $q \approx 0.25$, which they interpret as a predominant effect of factors leading to $q=0$, compared to those associated with larger values of $q$. 

Regarding this last point, let us highlight that if $q \in [-1,1]$, $q$-diffusion can be obtained as a \enquote{convex combination} of the standard models discussed in the Introduction, where $q$ takes values in $\{-1, 0, 1/2, 1\}$. This can be demonstrated by using convex combinations of Equation~\eqref{eq:main_KPP_bis} with $q \in \{-1, 0, 1/2, 1\}$. Consequently, a $q$-diffusion mechanism can also arise from a combination of several standard $q_i$-diffusion mechanisms, where $q_i \in \{-1, 0, 1/2, 1\}$. 
In this paper, we also consider cases where $q \notin [-1,1]$, which fall outside the scope of all standard models but yield interesting mathematical properties.

\paragraph{Extensions.} While our results are derived in a 1D context, extending this analysis to higher spatial dimensions could unveil additional phenomena that further influence the effects of phase shifts on the spreading speed. A possible extension of $q-$diffusion operators to higher dimensions $n\ge 2$ could incorporate different types of diffusion along each spatial direction, allowing for direction-dependent exponents $q_i$. The resulting operator would take the form:
\[
\sum_{i=1}^n \p_{x_i} \left( D_i(x_i)^{1 - q_i} \p_{x_i} \big(D_i(x_i)^{q_i} u \big) \right),
\]
where $D_i(x_i)$ represents the diffusion coefficient along the $i$-th direction, and $q_i$ determines the type of diffusion in that direction. In the context of eco-evolutionary models, populations can be structured both in space and phenotypic traits, leading to heterogeneity arising from entirely different mechanisms. This heterogeneity could, for instance, result from environmental variations in space or genomics variations affecting the mutation rate, making the use of such a generalized model potentially relevant.

\

In summary, this work advocates for a paradigm shift in the conventional use of Fickian diffusion in mathematical biology, emphasizing the deliberate and informed selection of diffusion models tailored to the unique attributes of the system under study. It underscores the importance of accounting for the interplay between diffusion dynamics and spatial heterogeneity in designing theoretical models, often masked by the homogenizing effect of Fickian diffusion.

\section{Proofs and technical lemmas}\label{s:proofs}

\subsection{Proof of Theorem \ref{thm:choose_r_d}, persistence}

\paragraph{Proof of Theorem~\ref{thm:choose_r_d}, \emph{(i)} and \emph{(ii)}, persistence.}
We first prove Proposition~\ref{prop:compar_k}. We will then use it to prove the persistence part of Theorem~\ref{thm:choose_r_d}, \emph{(i)} and \emph{(ii)}.

\begin{proof}[Proof of Proposition~\ref{prop:compar_k}]

  Let $\psi>0$ be a principal eigenfunction associated with \allowbreak $k_0^\lambda[r;D]$. By definition, we have
\begin{equation}\label{eq:vp_0}
    D \psi'' + [ D' -2 \lambda D] \psi' + [ \lambda^2 D - \lambda D' + r] \psi = k_0^\lambda[r;D] \psi.
\end{equation}
Let $\gamma>0$ and set
\[\phi := \frac{\psi}{D^{\gamma}}.\]
Replacing $\psi$ with $\phi D^\gamma$ in \eqref{eq:vp_0} and dividing the equation by $D^\gamma$, a computation gives 
 \begin{multline*}
   D\phi'' + [ (1+2\gamma) D' -2 \lambda D] \phi' + \cro{ \gamma D'' + \gamma^2 \frac{(D')^2}{D} + \lambda^2 D -(1+ 2\gamma) \lambda D' + r } \phi \\
   =k_0^\lambda[r;D] \phi. 
 \end{multline*}
Taking $\gamma=q/2$, we get: 
\begin{equation*}
    \Ll_{q}^\lambda\cro{ r- \frac{q}{2}D'' +\frac{q^2}{4} \frac{(D')^2}{D} ; D } \phi =  k_0^\lambda[ r ; D ] \phi.
\end{equation*}
The  uniqueness of the principal eigenvalue implies that
\[ k_q^\lambda\cro{ r- \frac{q}{2}D'' +\frac{q^2}{4} \frac{(D')^2}{D} ; D }= k_0^\lambda[r;D] ,\]
which in turn leads to the result of the proposition.

\end{proof}

Next, we use the result of Proposition~\ref{prop:compar_k} to exhibit a situation where Fickian diffusion ($q=0$) increases the ability of persistence.

\begin{lemma}[Theorem~\ref{thm:choose_r_d}, \emph{(i)}, persistence] \label{lem:Fick_persistr1}
  Assume that $D$ is not constant. Let $r_0\in\R$ be a constant and, for $q\in\R\setminus\acc{0}$, let
  $\ds h_q:=-\frac{q}{2}D'' +\frac{q^2}{4} \frac{(D')^2}{D}$.\\
  For all $q\in \R \setminus \{0\},$ we have, with $r(x):=r_0 + h_q(x)$,
    \[k_q^0[r;D] < k_0^0[r;D].\]
\end{lemma}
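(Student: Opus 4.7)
}
The idea is to transform the statement, via Proposition \ref{prop:compar_k}, into a comparison of two Fickian ($q=0$) principal eigenvalues, and then to use the self-adjoint (Rayleigh) characterization on the right-hand side together with the simple observation that the constant function is the principal eigenfunction on the left-hand side.

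First, I would apply Proposition \ref{prop:compar_k} to the given $r=r_0+h_q$, which gives
\[
k_q^0[r;D] \;=\; k_0^0[r-h_q;D] \;=\; k_0^0[r_0;D].
\]
Since $r_0$ is constant, the function $\varphi\equiv 1$ satisfies $\dr_x(D\,\dr_x \varphi)+r_0\varphi=r_0$, so by uniqueness of the principal eigenpair one obtains $k_0^0[r_0;D]=r_0$. Hence $k_q^0[r;D]=r_0$.

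Next, I would bound $k_0^0[r;D]$ from below. The Fickian operator $\psi\mapsto \dr_x(D\dr_x\psi)+r\psi$ with periodic boundary conditions is self-adjoint on $L^2(0,1)$, so its principal (i.e.\ largest) eigenvalue admits the Rayleigh characterization
\[
k_0^0[r;D] \;=\; \sup_{\psi\in H^1_{\mathrm{per}}\setminus\{0\}}\ \frac{-\int_0^1 D(\psi')^2+\int_0^1 r\psi^2}{\int_0^1\psi^2}.
\]
Plugging in the test function $\psi\equiv 1$ yields $k_0^0[r;D]\ge \int_0^1 r = r_0+\int_0^1 h_q$.

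It then remains to check that $\int_0^1 h_q>0$. This is the one computational step, and it is immediate: by $1$-periodicity $\int_0^1 D''=0$, so
\[
\int_0^1 h_q \;=\; \frac{q^2}{4}\int_0^1 \frac{(D')^2}{D},
\]
which is strictly positive since $q\neq 0$ and $D$ is nonconstant (so $D'\not\equiv 0$, while $D>0$). Combining gives $k_0^0[r;D]\ge r_0+\frac{q^2}{4}\int_0^1 (D')^2/D > r_0 = k_q^0[r;D]$, which is exactly the claim.

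The proof has no serious obstacle: the only thing one has to notice is the trick of choosing $r$ precisely of the form $r_0+h_q$ so that, after the transformation of Proposition \ref{prop:compar_k}, the left-hand side reduces to the constant-coefficient problem whose principal eigenvalue is trivially $r_0$, while the right-hand side keeps a genuinely heterogeneous coefficient $r$ whose mean strictly exceeds $r_0$. The potential subtlety, worth a one-line mention, is the self-adjointness of the Fickian operator under periodic boundary conditions, which legitimizes the Rayleigh lower bound.
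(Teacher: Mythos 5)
Your proposal is correct and follows essentially the same route as the paper: apply Proposition~\ref{prop:compar_k} to reduce $k_q^0[r;D]$ to $k_0^0[r_0;D]=r_0$, then bound $k_0^0[r;D]$ from below by the mean $\int_0^1 r = r_0+\int_0^1 h_q > r_0$, using that $\int_0^1 h_q=\frac{q^2}{4}\int_0^1 (D')^2/D>0$ by periodicity. The only cosmetic difference is that the paper obtains the lower bound $k_0^0[r;D]\ge \int_0^1 r$ by dividing the eigenfunction equation by $\psi$ and integrating by parts, whereas you use the Rayleigh quotient with the test function $\psi\equiv 1$; these are equivalent.
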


\begin{proof} 

  Using Proposition~\ref{prop:compar_k}, we have 
\begin{equation} \label{eq:k0h}
    k_q^0[r;D]=k_0^0[ r-h_q; D ] = k_0^0[ r_0; D ]. 
\end{equation}
Let \[\delta:= \int_0^1 h_q.\]
Since $D$ is not constant, we have $\delta >0$.
Hence
\begin{equation} \label{eq:k0delta}
k_0^0[ r_0; D ] = r_0 < r_0+\delta = k_0^0[ r_0+\delta; D ].     
\end{equation}
Let $\psi$ a principal eigenfunction associated with $k^{0}_0[ r;D ]$. By definition, we have
\begin{equation*}
     (D\psi')' +  r \psi =  k^{0}_0[ r;D ] \psi.
\end{equation*}
Dividing this equation by $\psi$ and integrating by parts, we get:
\begin{equation} \label{eq:divipp}
     \int_0^1 D \frac{(\psi')^2}{\psi^2} +   r_0 + \delta =  k^{0}_0[ r;D ],
\end{equation}
which implies that  $ k^{0}_0[ r;D ] \ge r_0 + \delta.$ Using this inequality with \eqref{eq:k0h} and \eqref{eq:k0delta}, we get $k^{0}_q[ r;D ] <k^{0}_0[ r;D ]$.

\end{proof}

We then use the fact that $-h_q$ takes some positive values to show that the reverse inequality can be true for another choice of $r$.
\begin{lemma}[Theorem~\ref{thm:choose_r_d}, \emph{$(ii)$}, persistence] \label{lem:Fick_persistr2}
    Assume that $D$ is not constant. For $q\in\R\setminus\acc{0}$, let
  $\ds h_q:=-\frac{q}{2}D'' +\frac{q^2}{4} \frac{(D')^2}{D}$.\\
  For all $q\in \R \setminus \{0\},$ we have, with $r(x):=-a h_q(x)$, and $a>0$ large enough,
    \[k_q^0[r;D] > k_0^0[r;D].\]
\end{lemma}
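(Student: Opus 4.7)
My plan is to reduce the inequality to a monotonicity statement via Proposition~\ref{prop:compar_k}, then exploit convexity of the principal eigenvalue along a one-parameter family to conclude asymptotically. By Proposition~\ref{prop:compar_k}, $k_q^0[-a h_q; D] = k_0^0[-(a+1)h_q; D]$, so, writing $F(a) := k_0^0[-a h_q; D]$, the claim reduces to showing $F(a+1) > F(a)$ for all $a$ large enough.

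The first step is to show that $M := \max_x(-h_q(x)) > 0$. A single integration by parts using the periodicity of $D$ yields the identity $\int_0^1 h_q\, D^{-q/2}\,dx = 0$: indeed, $-\frac{q}{2}\int D''\, D^{-q/2} = -\frac{q^2}{4}\int (D')^2\, D^{-q/2-1}$, which exactly cancels the second term of $h_q\, D^{-q/2}$. Combined with $\int_0^1 h_q = \frac{q^2}{4}\int_0^1 (D')^2/D > 0$ (which holds since $D$ is nonconstant and $q\neq 0$, as already used in Lemma~\ref{lem:Fick_persistr1}), this forces $h_q\not\equiv 0$. A vanishing weighted integral against the strictly positive weight $D^{-q/2}$ then forces $h_q$ to take negative values somewhere, i.e. $M>0$.

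Next, I will study $F$ via the self-adjoint Rayleigh formula
$$F(a) = \sup_{\psi\text{ periodic}}\frac{-a\int_0^1 h_q\, \psi^2 - \int_0^1 D(\psi')^2}{\int_0^1 \psi^2},$$
which expresses $F$ as a supremum of affine functions of $a$, hence convex. The upper bound $F(a) \leq \max(-a h_q) = aM$ follows from the variational formula by dropping the gradient term. For the matching lower bound, I will plug in a family of nonnegative periodic test functions $\phi_\varepsilon$ concentrated in a neighborhood of size $\varepsilon$ around a point $x_M$ realizing $-h_q(x_M) = M$: for fixed $\varepsilon$, the ratio $\int -h_q\,\phi_\varepsilon^2/\int \phi_\varepsilon^2$ can be made arbitrarily close to $M$, while the $\varepsilon$-dependent constant $\int D(\phi_\varepsilon')^2/\int \phi_\varepsilon^2$ is absorbed on dividing by $a$ and letting $a\to\infty$. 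Thus $F(a)/a \to M$ as $a\to\infty$.

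To conclude, convexity of $F$ makes $a\mapsto F(a+1)-F(a)$ nondecreasing in $a$, and the Cesàro-type identity $F(a)/a\to M$ forces this nondecreasing quantity to converge to $M$ itself (a bounded nondecreasing sequence whose Cesàro mean tends to $M$ must tend to $M$). Since $M>0$, we obtain $F(a+1)>F(a)$ for all $a$ large enough, which is the desired inequality. The most delicate step is the asymptotic $F(a)/a\to M$: the concentration argument for the lower bound must simultaneously narrow the test function around $x_M$ (to capture $M$) and keep the gradient penalty small enough to remain subleading in $a$, which is why I separate the limits $a\to\infty$ and $\varepsilon\to 0$.
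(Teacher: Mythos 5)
Your argument is correct, and it follows the paper's skeleton only in its first step: like the paper, you reduce the claim via Proposition~\ref{prop:compar_k} to showing that $F(a):=k_0^0[-a\,h_q;D]$ satisfies $F(a+1)>F(a)$ for large $a$, and you need the fact that $-h_q$ is somewhere positive. From there the routes diverge. The paper simply asserts the positivity of $\max(-h_q)$ from periodicity and then quotes Proposition~5.2 of \cite{BerHamRoq05a} to get that $a\mapsto F(a)$ is (eventually) positive and increasing, which immediately yields $F(a+1)>F(a)$. You instead make both ingredients self-contained: the identity $\int_0^1 h_q D^{-q/2}=0$ (a clean integration by parts, consistent with the fact that $D^{-q/2}$ is the principal eigenfunction of $\mathcal{L}_q^0[h_q;D]$ with eigenvalue $0$), combined with $\int_0^1 h_q>0$, forces $M:=\max(-h_q)>0$; and the large-$a$ monotonicity is extracted from the self-adjoint Rayleigh formula for $q=0$ (Lemma~\ref{lem:variational} with $q=0$): $F$ is convex as a supremum of affine functions of $a$, the test-function concentration argument gives $F(a)/a\to M$, and the Cesàro/telescoping argument then forces the nondecreasing increments $F(a+1)-F(a)$ to converge to $M>0$. (Two harmless technicalities: the bump test functions should be made positive, e.g. by adding a small constant or invoking the $H^1$-periodic Rayleigh quotient; and the word ``bounded'' in your last step is not needed, since a nondecreasing sequence whose Cesàro means tend to $M$ automatically has limit $M$, possibly after noting its limit exists in $(-\infty,+\infty]$.) What your approach buys is independence from the external reference and an explicit asymptotic, $F(a+1)-F(a)\to M$; what the paper's approach buys is brevity, at the cost of leaving the positivity of $\max(-h_q)$ and the monotonicity input to a citation.
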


\begin{proof} 
  Set $g(a):=k_0^0[ - a \, h_q ; D ],$ for $a \ge 0.$
  It follows from the periodicity of $D$ that there exists $x_0$ in $[0,1]$ such that $-h_q(x_0)>0$.
  Thus, Proposition~5.2 in \cite{BerHamRoq05a} also implies that there exists $a_0>0$ such that for $a>a_0$, $g(a)>0$ and $g$ is increasing on $[a_0,+\infty)$.
    In particular, $g(1+a)> g(a)$ for $a>a_0$.
    Coming back to the definition of $g$, we have $k_0^0[ - a \, h_q - h_q ; D ] > k_0^0[ - a \, h_q  ; D ]$ for $a>a_0$.
    Using Proposition~\ref{prop:compar_k}, we get: $$k_q^0[ - a \, h_q ; D ] > k_0^0[ - a \, h_q  ; D ].$$
\end{proof}

\paragraph{Proof of Theorem~\ref{thm:choose_r_d}, \emph{(iii)} and \emph{(iv)}.}
As mentioned above, the main ingredient of the proof of \emph{(iii)} and \emph{(iv)} in Theorem~\ref{thm:choose_r_d} is Proposition~\ref{prop:largeD}. We first prove a variational formula for $k^0_q[r;D]$ as an intermediate step for the proof of Proposition~\ref{prop:largeD}.

\begin{lemma}\label{lem:variational}
  We have:
  \begin{equation} \label{eq:var1}
    k^{0}_q[r;D] =\max_{\varphi \in E } \pth{-\int_0^1 D^{1-q} |(D^{q/2} \varphi)'|^2 + \int_0^1 r \varphi^2},
  \end{equation}
where $E$ is the set of positive $1-$periodic $\mcc^2$ functions $\varphi$ with $\int_0^1  \varphi^2 =1$.
\end{lemma}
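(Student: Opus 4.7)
The plan is to reduce the non-self-adjoint eigenvalue problem for $\mcl_q^0$ to a genuinely self-adjoint eigenvalue problem through a well-chosen multiplicative change of unknown, so that the standard Rayleigh--Ritz variational principle applies. The motivation for the form of the functional on the right-hand side is that the integrand $D^{1-q}|(D^{q/2}\varphi)'|^2$ already looks like the quadratic form obtained from integrating by parts $D^{q/2}\,\partial_x(D^{1-q}\partial_x(D^{q/2}\cdot))$ against a test function.

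Concretely, I would set $\psi = D^{-q/2}\varphi$, so that $D^q\psi = D^{q/2}\varphi$. Inserting this in $\mcl_q^0\psi = k_q^0[r;D]\,\psi$ and then multiplying the equation by $D^{q/2}$ yields
\[
T\varphi + r\varphi = k_q^0[r;D]\,\varphi, \qquad T\varphi := D^{q/2}\,\p_x\!\left(D^{1-q}\,\p_x\!\left(D^{q/2}\varphi\right)\right).
\]
Using the $1$-periodicity of $D$ and of the test functions, integration by parts gives, for any $1$-periodic $\mcc^2$ functions $\varphi,\eta$,
\[
\int_0^1 T\varphi\cdot\eta = -\int_0^1 D^{1-q}\,(D^{q/2}\varphi)'\,(D^{q/2}\eta)',
\]
which is symmetric in $\varphi$ and $\eta$. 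Thus $T+r\,\mathrm{Id}$ is self-adjoint on $L^2_{\mathrm{per}}(0,1)$, and the associated quadratic form is exactly the functional appearing on the right-hand side of \eqref{eq:var1}.

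The conclusion then follows from two observations. First, by the standard Rayleigh--Ritz characterization, the largest eigenvalue of the self-adjoint operator $T+r$ equals the supremum of the quadratic form over $1$-periodic test functions with unit $L^2$ norm. Second, the substitution $\varphi = D^{q/2}\psi$ sends the (strictly positive) principal eigenfunction of $\mcl_q^0$ associated with $k_q^0[r;D]$ to a strictly positive eigenfunction of $T+r$ with the same eigenvalue; since a self-adjoint elliptic operator on a compact interval with periodic boundary conditions admits a largest eigenvalue, and this eigenvalue is the only one with a positive eigenfunction, we conclude that $k_q^0[r;D]$ is precisely the top eigenvalue of $T+r$. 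Restricting the maximization to positive $\mcc^2$ functions does not change the supremum, since the maximizer in the unrestricted class is already strictly positive. The main (mild) obstacle is just a careful bookkeeping in the integration by parts to confirm the symmetry of $T$; once that is done, the formula is immediate from classical spectral theory.
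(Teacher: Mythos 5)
Your proposal is correct and follows essentially the same route as the paper: your operator $T+r\,\mathrm{Id}$ is exactly the paper's symmetrized operator $\mct:\varphi\mapsto D^{q/2}\mcl(D^{q/2}\varphi)$, obtained by composing into one step the paper's two substitutions $v=D^q\psi$ and $\varphi=D^{-q/2}v$, and both arguments then conclude by the Rayleigh--Ritz principle together with the fact that the top eigenvalue is the only one admitting a positive periodic eigenfunction.
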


\begin{proof}

  Let $\psi>0$ be a principal eigenfunction associated with $k^{0}_q$. By definition, we have
\begin{equation*}
     (D^{1-q} (D^q \psi)')' +  r \psi =  k^{0}_q \psi.
\end{equation*}
Setting $v=D^q \, \psi$, we get:
\begin{equation} \label{eq:vpv}
     (D^{1-q} v')' +  \frac{r}{D^q} v =  \frac{k^{0}_q}{D^q} v.
\end{equation}
Since the operator $\mcl :v\mapsto (D^{1-q} v')' +  \frac{r}{D^q} v$ is self-adjoint, the operator $\mct:\varphi\mapsto D^{q/2}\mcl\pth{D^{q/2}\varphi}$ is self-adjoint as well.
The principal eigenvalue of $\mct$ is precisely $k^0_q$, and is given by a Rayleigh quotient:
	\begin{equation} \label{eq:var1p}
k^{0}_q=\max_{\phi \in G } \frac{-\int_0^1 D^{1-q}(\phi')^2 + \int_0^1 r D^{-q} \phi^2}{\int_0^1 D^{-q} \phi^2}
	\end{equation}
where~$G$ is the set of positive smooth $1-$periodic functions. Up to multiplication of $\phi$ by a constant in \eqref{eq:var1p}, we may assume that $\int_0^1 D^{-q} \phi^2=1$, and setting $\varphi= D^{-q/2} \phi$, this concludes the proof of Lemma~\ref{lem:variational}.
\end{proof}

We are now ready to prove Proposition~\ref{prop:largeD}.

\begin{proof}[Proof of Proposition~\ref{prop:largeD}]
  First,  we have $k^{0}_q[ r; B\, D ] \le    \max_{[0,1]} r$.
  Second, 
    let
\[\langle D^q \rangle_H := \frac{1}{\int_0^1 1/D^{q}} \]
be the harmonic mean of $D^q$.
Take 
\[\varphi = \sqrt{\langle D^q \rangle_H } D^{-q/2}\]
in the variational formula~\eqref{eq:var1} of Lemma~\ref{lem:variational}. This gives
\begin{equation} \label{eq:bounds_k^0}
   \langle D^q \rangle_H \int_0^1 \frac{r}{D^q} \le k^{0}_q[ r; B\, D ] \le    \max_{[0,1]} r.
\end{equation}
Now, take an arbitrary sequence $B_{n} \to +\infty$.
Let $\psi_n$ be the principal eigenfunction associated with $k^{0}_q[ r; B_n \, D ]$,
with the normalization condition $\| \psi_n\|_{L^2(0,1)}=1$.
By definition, we have
\begin{equation} \label{eq:psi_n}
     (D^{1-q}(D^q \psi_n)')' +  \frac{r}{B_n} \psi_n =  \frac{k^{0}_q[ r; B_n \, D ] }{B_n}\psi_n.
\end{equation}
Since
$k^{0}_q[ r; B_n \, D ] $ is bounded (see~\eqref{eq:bounds_k^0}), standard elliptic
estimates and Sobolev injections imply that, up to extraction of
a subsequence, the sequence of functions $\psi_{n}$ converges to a
nonnegative function $\psi$, locally in $W^{2,p}$ for all $1<p<\infty$.
Furthermore,
$\| \psi\|_{L^2(0,1)}=1$, $\psi$ is periodic and satisfies
\begin{equation*}
 (D^{1-q}(D^q \psi)')'=0.
\end{equation*}
Thus $\psi\equiv C\, D^{-q}$, for some positive constant $C$. Since $\| \psi\|_{L^2(0,1)}=1$, we have $C \| D^{-q}\|_{L^2(0,1)} =1$, thus $C=\sqrt{\langle D^{2 \, q} \rangle_H }$. 

Finally, we set $v_n:=D^q \, \psi_n$. We have:
\begin{equation} \label{eq:vpvn}
     B_n \, (D^{1-q} v_n')' +  \frac{r}{D^q} v_n =  \frac{k^{0}_q[ r; B_n \, D ] }{D^q} v_n.
\end{equation}
We multiply by $v_n$ and integrate by parts. We get:
\begin{equation} \label{eq:vpvn2}
    -B_n \int_0^1 D^{1-q}(v_n')^2 +  \int_0^1 \frac{r}{D^q} v_n^2 =  k^{0}_q[ r; B_n \, D ] \int_0^1 \frac{v_n^2}{D^q},
\end{equation}
which implies that 
\begin{equation*}
     k^{0}_q[ r; B_n \, D ] \le \frac{ \ds \int_0^1 \frac{r}{D^q} v_n^2}{\ds \int_0^1 \frac{v_n^2}{D^q}}.
\end{equation*}
Using \eqref{eq:bounds_k^0} and passing to the limit $n\to \infty$, we get $v_n \to \sqrt{\langle D^{ 2 q }\rangle_H }$ in $L^2(0,1)$
and
\[\lim_{n\to \infty}k^{0}_q[ r; B_n \, D ] =  \langle D^q \rangle_H \int_0^1 \frac{r}{D^q}. \]
The family $(k^{0}_q[ r; B \, D ])_{B>0}$ is bounded, and there is only one possible limit to a converging subsequence.
Therefore, the whole family converges to the same limit, namely:
\[\lim_{B\to \infty}k^{0}_q[ r; B \, D ] =  \langle D^q \rangle_H \int_0^1 \frac{r}{D^q}. \]

\end{proof}

\begin{lemma}[Theorem~\ref{thm:choose_r_d}, \emph{(iii)} and \emph{(iv)}]\label{lem:inegk^0}
  Let $r$ be fixed.
  Assume  that $r$ is nonconstant.
  For all $q\in \R \setminus \acc{0}$,
  there exists a $1-$periodic~$D_0$ such that $k^{0}_0[ r; D_0 ]> k^{0}_q[ r; D_0 ]$, and there exists a $1-$periodic~$D_1$ such that $k^{0}_0[ r; D_1 ]< k^{0}_q[ r; D_1 ]$. 
\end{lemma}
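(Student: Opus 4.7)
The plan is to combine Proposition~\ref{prop:largeD} with a concentration argument. For any positive $1$-periodic $D \in \mcc^{2,\alpha}$, that proposition yields
\[
\lim_{B \to +\infty} k^{0}_0[r; BD] = \bar{r}, \qquad \lim_{B \to +\infty} k^{0}_q[r; BD] = \frac{\int_0^1 r\, D^{-q}}{\int_0^1 D^{-q}},
\]
where $\bar r := \int_0^1 r$. I would first reduce the lemma to exhibiting two positive $\mcc^{2,\alpha}$ $1$-periodic candidates: one whose weighted average $\frac{\int_0^1 r D^{-q}}{\int_0^1 D^{-q}}$ is strictly less than $\bar r$, and one for which it is strictly greater. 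Taking $D_0$, $D_1$ to be sufficiently large scalar multiples of these candidates then transfers the strict inequalities on limits to strict inequalities on the $k^0$'s, which is precisely what the lemma asserts.

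To construct such candidates, I would use that $r$ is continuous and nonconstant: pick $x_-, x_+ \in [0,1]$ with $r(x_-) < \bar r < r(x_+)$. The trick is to prescribe $D^{-q}$ (rather than $D$ itself) as a smooth bump above a constant background, so that the probability density $D^{-q}/\int_0^1 D^{-q}$ concentrates near a chosen point. Concretely, given a smooth nonnegative $1$-periodic bump $\phi_-$ supported in a small neighborhood of $x_-$ and a parameter $\lambda > 0$, the function $\widetilde D_0 := (1 + \lambda \phi_-)^{-1/q}$ is smooth, positive, and $1$-periodic, and a direct computation gives
\[
\frac{\int_0^1 r\, \widetilde D_0^{-q}}{\int_0^1 \widetilde D_0^{-q}} = \frac{\bar r + \lambda \int_0^1 r\, \phi_-}{1 + \lambda \int_0^1 \phi_-} \xrightarrow[\lambda \to +\infty]{} \frac{\int_0^1 r\, \phi_-}{\int_0^1 \phi_-}.
\]
By continuity of $r$ at $x_-$, the right-hand side can be made as close to $r(x_-)$ as desired by shrinking the support of $\phi_-$, and in particular strictly below $\bar r$. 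Mirroring this construction with a bump $\phi_+$ concentrated near $x_+$ produces a $\widetilde D_1$ whose weighted average strictly exceeds $\bar r$.

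I do not anticipate a deep obstacle here, since the heavy lifting has already been packaged into Proposition~\ref{prop:largeD}. The one point that requires care is the order in which the asymptotic parameters are fixed: first the supports of $\phi_\pm$ are taken small enough that the limits $\frac{\int_0^1 r\, \phi_\pm}{\int_0^1 \phi_\pm}$ sit strictly on the intended side of $\bar r$; then $\lambda$ is taken large enough for the weighted averages themselves to inherit this strict separation; and finally the scalar $B$ in Proposition~\ref{prop:largeD} is chosen large enough that $k^{0}_q[r; B \widetilde D_0]$ and $k^{0}_0[r; B \widetilde D_0]$ lie in sufficiently small neighborhoods of their respective limits (and analogously for $\widetilde D_1$). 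All three steps rely only on continuity of the relevant quantities in their parameters, so the argument goes through cleanly.
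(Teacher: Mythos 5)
Your proposal is correct, and it rests on the same key ingredient as the paper: Proposition~\ref{prop:largeD}, which reduces both inequalities to comparing $\bar r=\int_0^1 r$ (the $q=0$ limit) with the weighted average $\int_0^1 r\,D^{-q}/\int_0^1 D^{-q}$, and then taking $B$ large. The difference is in how you realize the two strict inequalities. For the inequality $k^0_q<k^0_0$ the paper first shifts $r$ to be positive (using $k^0_q[r+\kappa;D]=k^0_q[r;D]+\kappa$), takes the explicit choice $D=r^{1/q}$, and invokes the arithmetic--harmonic mean (Jensen) inequality; you instead concentrate the weight $D^{-q}$ near a point $x_-$ where $r(x_-)<\bar r$, which avoids both the positivity reduction and Jensen, at the cost of losing the paper's clean ``$D$ in phase with $r$'' interpretation of the choice $r^{1/q}$. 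For the reverse inequality your construction (a bump of $D^{-q}$ near a point where $r$ exceeds its mean) is essentially the paper's argument, which concentrates $D^{-q}$ near the maximum of $r$; note you only need $r(x_+)>\bar r$, not the maximizer. Your handling of the order of limits (shrink the support of $\phi_\pm$, then send $\lambda\to\infty$, then $B\to\infty$) is exactly the point that needs care, and you have it right; prescribing $D^{-q}=1+\lambda\phi_\pm$ also keeps $D$ smooth and positive for every $q\neq0$, so the regularity hypotheses of Proposition~\ref{prop:largeD} are met.
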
 

\begin{proof}
  For all $q\in\R$ and $\kappa\in\R$, we have  $k^0_q[r+\kappa;D]=k^0_q[r;D]+\kappa$. 
    Therefore, up to replacing~$r$ with $r+\kappa$ with $\kappa$ large enough, we may assume that~$r>0$ without changing the ordering of the eigenvalues.
  
  Consider $D := r^{1/q}$ and let $D_0 = B \, D$.
  Proposition~\ref{prop:largeD} implies that
  \[\lim_{B\to \infty}k^{0}_0[ r; B\, D ]=  \int_0^1r\]
  and
  \[\lim_{B\to \infty}k^{0}_q[ r; B\, D ]=  \pth{\int_0^1\frac{1}{r}}^{-1}.\]
  Jensen's inequality implies that ($r$ being not constant)
  \[\int_0^1r>\pth{\int_0^1\frac{1}{r}}^{-1}.\]
  Therefore, for $B$ large enough, $k^{0}_0[ r; D_0 ] >  k^{0}_q[ r; D_0 ]$.

  To prove the other inequality, let $r_{max}$ be the maximum of $r$ over $[0,1]$. Assume that the maximum is reached at some point $x_m \in [0,1]$. Take $\varepsilon>0$ such that $r_{max}-2 \varepsilon> \int_0^1 r$ and choose $D$ such that $D^{-q}/\int_0^1 D^{-q}$ is \enquote{close to a Dirac mass at $x_m$}, in the sense that:
  \[ \int_0^1 r \frac{D^{-q}}{\int_0^1 D^{-q}}> r_{max} -\varepsilon.\]
  Proposition~\ref{prop:largeD} implies that
  \[\lim_{B\to \infty}k^{0}_0[ r; B\, D ]=\int_0^1 r \]
  and
  \[\lim_{B\to \infty}k^{0}_q[ r; B\, D ]=
  \int_0^1 r \frac{D^{-q}}{\int_0^1 D^{-q}}> r_{max} -\varepsilon>
  \int_0^1 r.\]
  This concludes the proof of the second inequality in Lemma~\ref{lem:inegk^0}, with $D_1= B\, D$ and~$B$ large enough. 

\end{proof}

\subsection{Proof of Theorem \ref{thm:choose_r_d}, spreading}

We now finish the proof of Theorem~\ref{thm:choose_r_d}.
Namely, we prove that if~$q$ and~$D$ are fixed, then there exists~$r$ such that $c_q^*[r;D]>c_0^*[r;D]$ and another~$r$ such that $c_q^*[r;D]<c_0^*[r;D]$.

We first prove Proposition~\ref{prop:compar_persist_speed}, which allows us to extend a comparison on the ability of persistence to a comparison on the spreading speed (when the ability of persistence is close to $0$).
Using the persistence part of Theorem~\ref{thm:choose_r_d}, we will then be able to prove the spreading part.

\begin{proof}[Proof of Proposition~\ref{prop:compar_persist_speed}]
We first show that $ c_{\tilde{q}}^*[ r - \kappa; D ] \to 0$ as $\kappa \to k_{\tilde{q}}^0[r;D].$ 
Assume by contradiction that there exist $\delta>0$ and a sequence $\kappa_n \in (0,k_{\tilde{q}}^0[r;D])$ such that $\kappa_n \to k_{\tilde{q}}^0[r;D]$ as $n\to \infty$ and $c_{\tilde{q}}^*[ r - \kappa_n; D ] > \delta$ for all $n$.

From the definition of $k_{\tilde{q}}^\lambda$, by uniqueness, we have $k_{\tilde{q}}^\lambda[ r - \kappa_n; D ]=k_{\tilde{q}}^\lambda[ r ; D ]- \kappa_n,$
and consequently,
\[ c_{\tilde{q}}^*[ r - \kappa_n; D ] =\inf_{\lambda>0} \left(\frac{k_{\tilde{q}}^\lambda[ r ; D ]}{\lambda} - \frac{\kappa_n}{\lambda}\right).\]
Thus,
\[ \frac{k_{\tilde{q}}^\lambda[ r ; D ]}{\lambda} - \frac{\kappa_n}{\lambda}\ge c_{\tilde{q}}^*[ r - \kappa_n; D ] \ge \delta\]
for all $n\ge 0$ and $\lambda>0$.
Passing to the limit $n\to \infty$, we get:
\begin{equation} \label{eq:lambdato0}
\forall\lambda\in\R,\qquad    \ds \frac{k_{\tilde{q}}^\lambda[ r ; D ]}{\lambda} - \frac{k_{\tilde{q}}^0[ r ; D ]}{\lambda}\ge \delta.
\end{equation}
We know that the function $\lambda \mapsto k_{\tilde{q}}^\lambda[r;D]$ is analytic (see e.g. \cite{Nad09a}) and that $\lambda=0$ is a global minimum of this function (using the convexity and Proposition~\ref{prop:left_right}), thus we have $\left. \p_\lambda k_{\tilde{q}}^\lambda[r;D] \right|_{\lambda=0}= 0.$ On the other hand, passing to the limit $\lambda \to 0$ in \eqref{eq:lambdato0}, we get $\left. \p_\lambda k_{\tilde{q}}^\lambda[r;D] \right|_{\lambda=0} \ge \delta$. Thus we get a contradiction. 
This proves that
\[c_{\tilde{q}}^*[ r - \kappa; D ] \to 0
\text{ as }\kappa \to k_{\tilde{q}}^0[r;D].\]
On the other hand, $k_{q}^\lambda[ r -\kappa ; D ] = k_{q}^\lambda[ r  ; D ]-\kappa$ is a nonincreasing function of $\kappa$
and therefore $ c_{q}^*[ r - \kappa; D ] $ is also a nonincreasing function of $\kappa$.
Additionally, 
\[k_{q}^\lambda[ r - k_{\tilde{q}}^0[ r  ; D ] ; D ]
= k_{q}^\lambda[ r  ; D ]-k_{\tilde{q}}^0[ r  ; D ]
\ge k_{q}^0 [ r  ; D ] -  k_{\tilde{q}}^0[ r  ; D ]>0\]
for all $\lambda>0$.
Thus, $ c_{q}^*[ r - \kappa; D ] \ge  c_{q}^*[ r - k_{\tilde{q}}^0[ r  ; D ] ; D ] >0$ for all  $\kappa \in [0,k_{\tilde{q}}^0[ r  ; D ]]$. As 
we have shown that  $ c_{\tilde{q}}^*[ r - \kappa; D ] \to 0$ as $\kappa \to k_{\tilde{q}}^0[r;D],$ this concludes the proof.

\end{proof}

\begin{proof}[Proof of Theorem~\ref{thm:choose_r_d}, $(i)-(ii)$, spreading speed]
Let~$q$ and~$D$ be fixed.
  Lemma~\ref{lem:Fick_persistr1} and Proposition~\ref{prop:compar_persist_speed} show the existence of a $1-$periodic $r\in\mcc^{0,\alpha}(\R)$ such that $c_q^*[r;D] < c_0^*[r;D]$. 
  Conversely, Lemma~\ref{lem:Fick_persistr2} and Proposition~\ref{prop:compar_persist_speed} show the existence of a $1-$periodic $r\in\mcc^{0,\alpha}(\R)$ such that $c_q^*[r;D] > c_0^*[r;D]$.
\end{proof}

\subsection{Deformation of space and proof of Theorem \ref{thm:r_constant}}

The goal of this section is to prove Theorem \ref{thm:r_constant}.
The main ingredients of the proof of Theorem \ref{thm:r_constant} are Propositions~\ref{ppn:equiv_unif_laplacian} and~\ref{prop:VP}.
Using a deformation of space, we first prove the following lemma. As a consequence, we will prove Proposition~\ref{ppn:equiv_unif_laplacian}.
Next, using Proposition~\ref{prop:VP} from~\cite{BouHamRoq25} and Proposition~\ref{prop:left_right}, we will conclude the proof of Theorem~\ref{thm:r_constant}.

\begin{lemma}\label{lem:space_deformation}
Let $D\in\mcc^{2,\alpha}(\R)$, $b\in\mcc^{1,\alpha}(\R)$ and $r\in\mcc^{0,\alpha}(\R)$ be $1-$periodic functions, with $D>0$.
Consider the diffeomorphism
\[h(x):=\int_0^x\frac{\de x'}{\sqrt{D(x')}}.\]
Let 
\[B(y):=\frac{2b+D'}{2\sqrt{D}}\pth{h^{-1}(y)}.\]
Let $v\geq 0$ solve the linear evolution problem
\[\dr_tv=\dr_{xx}(D(x)v)+\dr_x(b(x)v)+r(x)v,\]
with a nonnegative and locally bounded initial condition~$v(0,\cdot)$.
Let
\[V(t,y):={ \sqrt{D(h^{-1}(y))}}\,v\pth{t,h^{-1}(y)}\]
and $R(y):=r\pth{h^{-1}(y)}$. Then $V$ solves the linear evolution problem
\[\dr_tV=\dr_{yy}V+\dr_y(B(y)V)+R(y)V.\]
\end{lemma}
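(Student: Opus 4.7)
The plan is a direct change of variables $y = h(x)$. Since $D$ is positive and of class $\mathcal{C}^{2,\alpha}$, the map $h$ is a $\mathcal{C}^{3,\alpha}$ diffeomorphism of $\mathbb{R}$ with $h'(x) = 1/\sqrt{D(x)}$, so $(h^{-1})'(y) = \sqrt{D(h^{-1}(y))}$ and $dx = \sqrt{D}\,dy$. The choice $V(t,y) = \sqrt{D(h^{-1}(y))}\,v(t,h^{-1}(y))$ is exactly the rescaling that makes $V$ the density of the pushforward measure: $v(t,x)\,dx = V(t,y)\,dy$. This is the structural reason the statement should be true, and it also explains why precisely the factor $\sqrt{D}$ (and not some other power of $D$) appears in the definition of $V$.

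I would implement the verification in weak form to keep the flux structure transparent. Multiplying the original equation by a smooth $1$-periodic test function $\phi(x)$ and integrating by parts twice (periodicity discards the boundary terms) yields
\[
\int (\partial_t v)\,\phi\,dx \;=\; \int D\,v\,\phi_{xx}\,dx \;-\; \int b\,v\,\phi_x\,dx \;+\; \int r\,v\,\phi\,dx.
\]
Setting $\Phi(y) := \phi(h^{-1}(y))$ and applying the chain rule to obtain $\phi_x = \Phi_y/\sqrt{D}$ and $\phi_{xx} = \Phi_{yy}/D - D'\,\Phi_y/(2D^{3/2})$, together with $dx = \sqrt{D}\,dy$ and $V = \sqrt{D}\,(v\circ h^{-1})$, the right-hand side simplifies to
\[
\int V\,\Phi_{yy}\,dy \;-\; \int V\,\Phi_y\,\frac{2b + D'}{2\sqrt{D}}\,dy \;+\; \int R\,V\,\Phi\,dy,
\]
in which the coefficient $B(y) = \bigl((2b+D')/(2\sqrt{D})\bigr)\circ h^{-1}$ appears automatically. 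One last integration by parts in $y$ transfers derivatives back onto $V$, and by the arbitrariness of $\Phi$ one obtains $\partial_t V = \partial_{yy}V + \partial_y(BV) + RV$, which is the claim.

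The only genuine work is bookkeeping: the cancellation between the Jacobian $\sqrt{D}$ from $dx$, the factor $1/D$ produced by $\phi_{xx}$, and the rescaling factor $\sqrt{D}$ in $V$ is precisely what converts the heterogeneous second-order term $\partial_{xx}(Dv)$ into the flat Laplacian $\partial_{yy}V$; the remaining pieces collect themselves into the advection term $\partial_y(BV)$. An equally valid alternative, which I would fall back on only if the weak-form simplification became tangled, is a pointwise verification: compute $\partial_y V = \tfrac{D'}{2}v + D\,\partial_x v$ and $\partial_{yy}V = \tfrac{D''\sqrt{D}}{2}v + \tfrac{3D'\sqrt{D}}{2}\partial_x v + D^{3/2}\partial_{xx}v$ via the chain rule, substitute into the proposed right-hand side, and match coefficients of $v$, $\partial_x v$ and $\partial_{xx}v$ against those in the original equation after multiplying through by $\sqrt{D}$.
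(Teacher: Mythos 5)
Your computations are correct, and your fallback option (compute $\partial_y V=\tfrac{D'}{2}v+D\,\partial_x v$, $\partial_{yy}V=\tfrac{D''\sqrt D}{2}v+\tfrac{3D'\sqrt D}{2}\partial_x v+D^{3/2}\partial_{xx}v$ and match coefficients) is essentially the paper's proof for general $r$, which is a direct chain-rule verification. Your primary route is a mild but genuine repackaging: you run the same change of variables in weak (dual) form, putting the derivatives on a test function, which makes the cancellation between the Jacobian $\sqrt D\,\de y=\de x$, the $1/D$ from $\phi_{xx}$, and the factor $\sqrt D$ in $V$ transparent and produces $B=(2b+D')/(2\sqrt D)\circ h^{-1}$ automatically; the paper instead splits the proof into a probabilistic argument for $r\equiv 0$ (Itô's formula for $Y_t=h(X_t)$ and the Fokker--Planck equation, which is the rigorous version of your pushforward remark $v\,\de x=V\,\de y$) and the brute-force computation for general $r$. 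Your weak-form version buys cleaner bookkeeping and handles general $r$ in one stroke; the paper's probabilistic half buys interpretability of the drift $B$.

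One point needs fixing. You test against a smooth \emph{$1$-periodic} $\phi$ and claim periodicity kills the boundary terms, but $v$ itself is not periodic: the initial condition is only assumed nonnegative and locally bounded (later in the paper it is a Heaviside function). Over $\R$ the integral $\int v\phi$ with periodic $\phi$ need not converge, and over one period the boundary terms $\left[\,(Dv)\phi_x-\partial_x(Dv)\,\phi - bv\phi\,\right]_0^1$ do not cancel since $v$ is not periodic. The repair is immediate: take $\phi\in\mcc^\infty_c(\R)$; then $\Phi=\phi\circ h^{-1}$ is again compactly supported (as $h$ is a bijection of $\R$ because $D$ is bounded above and below), and all your integrations by parts are legitimate. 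Finally, to pass from the resulting weak identity $\int \partial_t V\,\Phi=\int\bigl(\partial_{yy}V+\partial_y(BV)+RV\bigr)\Phi$ for all such $\Phi$ back to the pointwise equation, invoke that $v$ is a classical solution (Hölder coefficients, parabolic regularity) and that $h$ is a $\mcc^3$ diffeomorphism, so $V$ is $\mcc^2$ in $y$ and both sides are continuous; with these two additions your argument is complete.
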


\begin{proof}[Proof of Lemma~\ref{lem:space_deformation} when $r\equiv0$]
  First, we assume that $r\equiv 0$ and that the initial data~$v(0,\cdot)$ is integrable with $\int_0^1v(0,\cdot)=1$.
  These assumptions allow us to understand the lemma from a probabilistic point of view.
  The proof for general~$r$ is much more technical and will be done below.

  We use the interpretation of the conservative evolution equation as the Fokker-Planck equation for an Itô diffusion.
  (Next, we will use linearity to prove the result for initial data that are not integrable.)
  
 By our assumptions, for all $t\geq0$, $v(t,\cdot)$ is the density
of the law of a particle satisfying the SDE
\[\de X_t=-b(X_t)\de t+\sqrt{2D(X_t)}\de W_t\]
with initial law of density $v(0,\cdot)$ (for some Brownian motion $(W_t)_t$).
Consider the change of variables $Y_t=h(X_t)$. Then Itô's formula gives
\begin{align*}
  \de Y_t &= -h'(X_t)\times b(X_t)\de t+h'(X_t)\sqrt{2D(X_t)}\de W_t+\frac{1}{2}h''(X_t)\times 2D(X_t)\de t\\
  &=\cro{-\frac{b}{\sqrt{D}}-\frac{D'}{2\sqrt{D}}}(X_t)\de t+\sqrt{2}\de W_t\\
  &=-\frac{2b+D'}{2\sqrt{D}}(X_t)\de t+\sqrt{2}\de W_t.
\end{align*}
The Fokker-Planck equation for this Itô diffusion is precisely:
\[\dr_t V(t,y) = \dr_{yy}V+\dr_y\pth{B(y)V}.\]
This proves the result for $r\equiv0$ and $\int v(0,\cdot)=1$.
Finally, using the linearity of the equations, we conclude that the result holds for all nonnegative $v(0,\cdot)\in L^1(\R)$. Last, using the fact that solving the equation is a local property, the result holds for all nonnegative and locally bounded~$v(0,\cdot)$.

\end{proof}

\begin{proof}[Proof of Lemma~\ref{lem:space_deformation} for general~$r$]
 When $r\not\equiv0$, the proof is unfortunately much more technical, because we need to do the explicit computations. 
  For conciseness, let us denote $H:=h^{-1}$. Note that $H'(y)=\sqrt{D(H(y))}$.
  First, we compute:
  \begin{align*}
    \dr_yV(t,y)&=\sqrt{D(H(y))}H'(y)\dr_yv(t,H(y))+\frac{H'(y)D'( H(y))}{2\sqrt{D(H(y))}}v(t,H(y))\\
      &=D(H(y))\dr_yv(t,H(y))+\frac{D'(H(y))}{2}v(t,H(y)).
  \end{align*}
  Therefore,
  \begin{align*}
    \dr_{yy}V(t,y)
    &=H'(y)D'(H(y))\dr_yv(t,H(y))+H'(y)D(H(y))\dr_{yy}v(t,H(y))\\
    &\qquad+\frac{H'(y)D'(H(y))}{2}\dr_yv(t,H(y)) + \frac{H'(y)D''( H(y))}{2}v(t,H(y))\\
    &=\frac{3}{2}D'(H(y))\sqrt{D(H(y))}\dr_yv(t,H(y))+D(H(y))^{3/2}\dr_{yy}v(t,H(y))\\
    &\qquad+\sqrt{D(H(y))}\frac{D''( H(y))}{2}v(t,H(y)).
  \end{align*}
  Next, we develop the expression of $\dr_tv$:
  \begin{align*}
    \dr_tv(t,x)&=D\dr_{xx}v(t,x)+(b+2D')\dr_xv(t,x)+(r+D''+b')v(t,x).
  \end{align*}
Now, we can compute:
\begin{align*}
  \dr_tV(t,y)&=\sqrt{D(H(y))}\dr_tv(t,H(y))\\
  &=D(H(y))^{3/2}\dr_{yy}v(t,H(y))\\
  &\qquad+\pth{\sqrt{D(H(y))} b+2D'(H(y))\sqrt{D(H(y))}}\dr_yv(t,H(y))\\
  &\qquad+\sqrt{D(H(y))}(r(H(y))+D''(H(y))+b'(H(y)))v(t,H(y)).
\end{align*}
The last expression can be rewritten (each line in the expression below corresponds to a line in the expression above):
\begin{align}
  \dr_tV(t,y)
  &=\dr_{yy}V(t,y)-\frac{3}{2}D'(H(y))\sqrt{D(H(y))}\dr_yv(t,H(y))-\sqrt{D(H(y))}\frac{D''( H(y))}{2}v\nonumber\\
  &\qquad+\pth{\sqrt{D(H(y))} b+2D'(H(y))\sqrt{D(H(y))}}\dr_yv(t,H(y))\nonumber\\
  &\qquad+(R(y)+D''(H(y))+b'(H(y)))V(t,H(y)).\label{eq:lem_deformation_space_1}
\end{align}
The second-order term in~\eqref{eq:lem_deformation_space_1} is $\dr_{yy}V(t,y)$, as required. The first-order term in~\eqref{eq:lem_deformation_space_1} is
\begin{align*}
  &\pth{-\frac{3}{2}D'(H(y))\sqrt{D(H(y))}+\sqrt{D(H(y))} b+2D'(H(y))\sqrt{D(H(y))}}\dr_yv(t,H(y))\\
  &\qquad=\pth{\frac{1}{2}D'(H(y))\sqrt{D(H(y))}+\sqrt{D(H(y))} b}\frac{\dr_yV(t,y)-\frac{D'(H(y))}{2}v(t,H(y))}{D(H(y))}\\
  &\qquad=B(y)\dr_yV(t,y)-\frac{B(y)D'(H(y))}{2}v(t,H(y)).
\end{align*}
We obtain, recalling that $V(t,y)=\sqrt{D(H(y))}v(t,H(y))$,
\begin{align}
  \dr_tV(t,y)
  &=\dr_{yy}V(t,y)+B(y)\dr_yV(t,y)\nonumber\\
  &\qquad+\pth{-\frac{B(y)D'(H(y))}{2\sqrt{D(H(y))}}+R(y)+\frac{D''(H(y))}{2}+b'(H(y))}V(t,H(y)).\label{eq:lem_deformation_space_2}
\end{align}
To compute the zero-order term in~\eqref{eq:lem_deformation_space_2}, we first compute:
\begin{align*}
  B'(y)&=H'(y)\pth{\frac{b'(H(y))}{\sqrt{D(H(y))}}-\frac{b(H(y))D'(H(y))}{D(H(y))^{3/2}}+\frac{D''(H(y))}{2\sqrt{D(H(y))}}-\frac{D'(H(y))^2}{4D(H(y))^{3/2}}}\\
  &=b'(H(y))-\frac{D'}{2D}b(H(y))+\frac{D''(H(y))}{2}-\frac{D'(H(y))^2}{4D(H(y))}.
\end{align*}
The zero-order term in~\eqref{eq:lem_deformation_space_2} is therefore:
\begin{align*}
  &\pth{-\frac{B(y)D'(H(y))}{2\sqrt{D(H(y))}}+R(y)+\frac{D''(H(y))}{2}+b'(H(y))}V(t,y)\\
  &\qquad=\pth{-\frac{bD'(H(y))}{2D(H(y))}-\frac{D'(H(y))^2}{4D(H(y))}+R(y)+\frac{D''(H(y))}{2}+b'(H(y))}V(t,y)\\
  &\qquad=\pth{R(y)+B'(y)}V(t,y).
\end{align*}
We conclude that
\begin{align*}
  \dr_tV
  &=\dr_{yy}V+B\dr_yV+\pth{R+B'}V\\
  &=\dr_{yy}V+\dr_y(BV)+RV,
\end{align*}
as stated.

%
\end{proof}

\begin{proof}[Proof of Proposition \ref{ppn:equiv_unif_laplacian}]
  In this proof, we write for conciseness $\mcl_q^{\lambda}$ instead of $\mcl_q^{\lambda}[r;D]$ and $k_q^{\lambda}$ instead of $k_q^{\lambda}[r;D]$.
  
  First, we let $\varphi>0$ be a principal eigenfunction of the operator~$\mcl_q^{\lambda}$.
  We have:
  \[(\mcl_q^{\lambda}-k_q^{\lambda})\varphi=0.\]
  Second, we recall that by the definition of $\mcl_q^{\lambda}$, we have, for all $\phi\in\mcc^{2,\alpha}(\R)$,
  \[E_{\lambda}\,\mcl_q^0(E_{-\lambda}\phi)=\mcl_q^{\lambda}\phi
  \qquad\text{ with }E_{\lambda}(x):=e^{\lambda x}.\]
  Therefore,
  \[(\mcl_q^{0}-k_q^{\lambda})(E_{-\lambda}\varphi)=0.\]
  Now, we rewrite $\mcl_q^0$ as:
  \[\mcl_q^0\phi=\dr_x\pth{D^{1-q}\dr_x\pth{D^q\phi}}+r\phi=\dr_{xx}(D(x)\phi)-(1-q)\dr_x\pth{D'\phi}+r\phi.\]
  The function $(t,x)\mapsto E_{-\lambda}(x)\varphi(x)$ solves:
  \[0=\dr_t(E_{-\lambda}\varphi)=(\mcl_q^{0}-k_q^{\lambda})(E_{-\lambda}\varphi).\]
We let $\Phi(y):=\sqrt{D(h^{-1}(y) )}E_{-\lambda}(h^{-1}(y))\varphi(h^{-1}(y))$. 
Then, Lemma~\ref{lem:space_deformation} with $v(t,x)=E_{-\lambda}(x)\varphi(x)$
implies that~$\Phi$ satisfies
\[\dr_{yy}\Phi-s_q\dr_y\pth{\frac{D'}{\sqrt{D}}\pth{h^{-1}(y)}\times \Phi }+(R(y)-k_q^{\lambda})\Phi=0,\]
 with $s_q:=\frac{1}{2}-q$.
Setting $\drift(y):=\ln \pth{D\pth{h^{-1}(y)}}$ gives 
\[\drift'(y)=\frac{D'}{h' D}\pth{h^{-1}(y)}=\frac{D'}{\sqrt{D}}\pth{h^{-1}(y)}.\]
Thus
\begin{equation}\label{eq:unif_laplacianPhi}
  (\widehat{\mcl}_q^{0}-k_q^\lambda)\Phi
  =\dr_{yy}\Phi-s_q\dr_y\pth{\drift'\Phi}+(R(y)-k_q^\lambda)\Phi=0.
\end{equation}
Now, a short computation
shows that
\[E_{\lambda D_H}\widehat{\mcl}_q^0\pth{E_{-\lambda D_H}\phi}=\widehat{\mcl}_q^{\lambda D_H}\phi,\]
where we denote $D_H:=\scal{\sqrt{D}}_H$.
Therefore, writing
$\psi:=E_{{\lambda D_H} }\,\Phi$ and
using~\eqref{eq:unif_laplacianPhi}, we obtain:
\[(\widehat{\mcl}_q^{\lambda D_H}-k_q^\lambda)\psi=0.\]
Furthermore, we note that
\[\psi(y)=e^{\lambda\pth{yD_H-h^{-1}(y)}}\varphi(h^{-1}(y)).\]
Hence, since $D_H=1/h(1)$, we find that $\psi$ is $h(1)-$periodic.
Since we also have $\psi>0$, we conclude that $\psi$ is a principal eigenfunction for $\widehat{\mcl}_q^{\lambda D_H}$ and that the associated principal eigenvalue is $k_q^\lambda$.
Therefore, $\widehat{k}_q^{\lambda D_H}=k_q^\lambda$, which is what we wanted to prove.  
%
\end{proof}

We now prove Proposition~\ref{prop:left_right}, which is the last ingredient of Theorem~\ref{thm:r_constant}.

\begin{proof}[Proof of Proposition~\ref{prop:left_right}]
  
  The case $q=0$ is standard (see e.g. \cite{Nad15}). We prove it for the sake of completeness.
  Let $\psi$ be a principal eigenfunction associated with $k_0^\lambda$, and let $\tpsi$ be a principal eigenfunction associated with $k_0^{-\lambda}.$ We have
\begin{align*}
    & (D \psi')'  -2 \lambda D \psi' + [ \lambda^2 D - \lambda D' + r] \psi = k_0^\lambda \psi, \\
    & (D \tpsi')' + 2 \lambda D \tpsi' + [ \lambda^2 D + \lambda D' + r] \tpsi = k_0^{-\lambda} \tpsi.
\end{align*}
Multiplying the first equation by $\tpsi$ and integrating by parts, we get:
\begin{equation*}
    \int_0^1 [ (D \tpsi')' + 2 \lambda  D \tpsi' + (  \lambda^2 D  +  \lambda D' + r)\tpsi  ] \psi = k_0^\lambda  \int_0^1 \tpsi\psi.
\end{equation*}
Thus,
\[k_0^{-\lambda}  \int_0^1 \tpsi\psi = k_0^\lambda  \int_0^1 \tpsi\psi,\]which implies that $k_0^{-\lambda}[ r ; D]=k_0^{\lambda}[ r ; D]$.

Last, for general $q\in\R$, we use Proposition~\ref{prop:compar_k} and get: 
\[k_q^{-\lambda}[r;D] = k_0^{-\lambda}[ r - h_q ; D ]= k_0^{\lambda}[ r - h_q ; D ]=k_q^{\lambda}[ r  ; D ].\]
  Using the Freidlin-Gärtner formulas (\eqref{eq:garfre} and~\eqref{eq:garfre_left}), we obtain that the spreading speed to the left and the spreading speed to the right coincide:
\[c^{*}_q[r;D]=\inf_{\lambda>0}\frac{k_q^{\lambda}[r;D]}{\lambda}=\inf_{\lambda>0}\frac{k_q^{-\lambda}[r;D]}{\lambda}=c_q^{*,\,left}[r;D].\]

\end{proof}

\begin{proof}[Proof of Theorem~\ref{thm:r_constant}, item~$(i)$]  By~\eqref{eq:garfre}, it is enough to show that $k_{1/2-q}^{\lambda} = k_{1/2+q}^{\lambda}$ for all $\lambda \in \R$ and $q\in \R$. 
  For the simplicity of notations, we note $D_H:=\scal{\sqrt{D}}_H$.
  Using Propositions~\ref{ppn:equiv_unif_laplacian} and~\ref{prop:left_right}, we have
\begin{equation} \label{eq:k1/2}
	k_{1/2-q}^{\lambda} = k_{1/2-q}^{-\lambda} =  \widehat{k}_{1/2-q}^{-\lambda{D}_H},
\end{equation}
where $\widehat{k}_{1/2-q}^{-\lambda{D}_H}$ is the principal eigenvalue of the operator $\widehat{\mcl}_{1/2-q}^{-\lambda{D}_H}$ defined in Proposition~\ref{ppn:equiv_unif_laplacian}, namely (pointing out that $s_{1/2-q}=q$):
\[\widehat{\mcl}_{1/2-q}^{-\tilde{\lambda}} :\Phi\mapsto\dr_{yy}\Phi - \dr_y\cro{\pth{-2\tilde{\lambda}+ q\drift'}\Phi}+\left(R - \tilde{\lambda} q \drift'+\tilde{\lambda}^2\right)\Phi,\]
with $\tilde{\lambda} := \lambda {D}_H$. 
We apply Proposition~\ref{prop:VP} with $b(x) := -2\tilde{\lambda} + q\drift'$
and
$a(x) := R - \tilde{\lambda} q \drift' + \tilde{\lambda}^2$,
which can be rewritten: \[a(x)=(R-\tilde{\lambda}^2)-\tilde{\lambda}b.\] 
Here, $R-\tilde{\lambda}^2$ and $\tilde{\lambda}$ are constant.
We deduce that $\widehat{k}_{1/2-q}^{-\lambda{D}_H}$ is also the principal eigenvalue of the operator 
\[\Phi\mapsto\dr_{yy}\Phi + \dr_y\cro{\pth{-2\tilde{\lambda}+ q\drift'}\Phi}+\left(R - \tilde{\lambda} q \drift'+\tilde{\lambda}^2\right)\Phi,\]
which is equal to $\widehat{\mcl}_{1/2+q}^{\lambda{D}_H}$. In other words, $\widehat{k}_{1/2-q}^{-\lambda{D}_H} = \widehat{k}_{1/2+q}^{\lambda{D}_H}$. Substituting into \eqref{eq:k1/2} and using Proposition~\ref{ppn:equiv_unif_laplacian} again,  we finally obtain:
\[
k_{1/2-q}^{\lambda} = \widehat{k}_{1/2-q}^{-\lambda{D}_H} = \widehat{k}_{1/2+q}^{\lambda{D}_H} = k_{1/2+q}^{\lambda}.
\]

\end{proof}

\begin{proof}[Proof of Theorem~\ref{thm:r_constant}, items~$(ii)$ and~(iii)]
  We point out that $s_{1/2}=0$, so $\widehat{k}_{1/2}^{\mu}=r+\mu^2$. Therefore,
  \begin{align*}
    c_{1/2}^*&=\inf_{\lambda>0}\frac{{k}_{1/2}^{\lambda}}{\lambda}=\inf_{\lambda>0}\frac{\widehat k_{1/2}^{\lambda\scal{\sqrt{D}}_H}}{\lambda}\\
    &=\inf_{\lambda>0}\pth{\frac{r}{\lambda}+\lambda\pth{\scal{\sqrt{D}}_H}^2}\\
    &=2\sqrt{r}\times{\scal{\sqrt{D}}_H}.
  \end{align*}
  Finally, \cite[Proposition 2.7]{Nad11} implies that $c^*_q\leq c^*_{1/2}$.
  This proves Theorem~\ref{thm:r_constant}, items~$(ii)$ and~$(iii)$.
  
  Let us now prove the remark after the statement of Theorem~\ref{thm:r_constant} about the different periods.
  Consider the solution to the linearized version of~\eqref{eq:main_KPP} in the $L-$periodic setting:
  \[\dr_tv=\dr_{x}(D^{1/2}_L\dr_x(D^{1/2}_Lv))+rv,\]
  where $D_L(x):=D(x/L)$ is the $L-$periodic version of~$D$.
  We denote by $c^*_{1/2}(L)$ the spreading speed of $v$.
  Using the change of variables $x'=x/L$, we obtain that $c^*_{1/2}(L)=Lc'$ where $c'$ is the spreading speed of the solution $u$ of
  \[\dr_tu=\frac{1}{L^2}\dr_{x}(D^{1/2}\dr_x(D^{1/2}u))+ru.\]
  The coefficients of this equation are $1-$periodic.
  Hence, by Theorem~\ref{thm:r_constant}, item~$(ii)$, the spreading speed of $u$ is $c'=2\sqrt{r}\times\scal{\sqrt{D}/L}_H$, where $\scal{\sqrt{D}/L}_H$ is the harmonic mean of $\sqrt{D}/L$.
  Therefore,
  \[c^*_{1/2}(L)=Lc'=2\sqrt{r}\times L\pth{\int_0^1\frac{L}{\sqrt{D(x)}}}^{-1}=2\sqrt{r}\times\scal{\sqrt{D}}_H.\]

\end{proof}

\subsection{Proof of Theorems~\ref{thm:limits} and \ref{thm:monotonicity}}\label{ss:feynman-kac}

Throughout this section, we want to show properties of the principal eigenvalues \allowbreak  $k_q^{\lambda}[r;D]$, with fixed $r$ and $D$. 
Using Proposition~\ref{ppn:equiv_unif_laplacian}, this is equivalent to show properties of the principal eigenvalues $\widehat{k}_q^{\lambda}$. 
Therefore, we will work on the operators $\widehat{\mcl}_q^{\tilde{\lambda}}$, with $\tilde{\lambda}=\lambda\scal{\sqrt{D}}_H$, which will be simpler to handle than $\mcl_q^{\lambda}$.

\begin{proof}[Proof of Theorem~\ref{thm:limits}, item~$(i)$]
  Let us show that the ability of persistence $k_q^0=\widehat{k}_q^0$ converges to the limit stated in the theorem.
  We recall that $\widehat{k}_q^0$ is the principal eigenvalue of the operator defined in Proposition~\ref{ppn:equiv_unif_laplacian}; we recall the notations: $R(y):=r(h^{-1}(y))$ and $\drift(y):=\ln(D(h^{-1}(y))$, where~$h$ is the transformation defined before Proposition~\ref{ppn:equiv_unif_laplacian}.
  We let $\widehat{\mca}:=\acc{y\in\R \,/\, h^{-1}(y)\in\overline{\mca}}$ be the set of local maxima of~$\drift$.
  We want to show that
  \[\lim_{q\to-\infty}\widehat k_q^0=\max_{x\in\overline{\mca}}r(x).\]
  To this aim, we will show that
  \[\lim_{q\to-\infty}\widehat k_q^0=\max_{y\in\widehat{\mca}}R(y).\]
  We consider the adjoint of the problem defining $\widehat k_q^0$, namely, we let $\varphi_q>0$ be a $1-$periodic solution of 
  \begin{equation}\label{eq:limit_adjoint}
    \dr_{yy}\varphi_q+s_q\drift'\dr_y\varphi_q+R{\varphi_q}=\widehat k_q^0\varphi_q.    
  \end{equation}
  We let $\psi_q>0$ be the $1-$periodic function defined by $\psi_q(y):=e^{s_q\drift(y)/2}\varphi_q(y)$.
  By computations similar to those of the proof of Proposition~\ref{prop:compar_k}, we have:
  \[\dr_{yy}\psi_q+\pth{R-\frac{(s_q\drift')^2}{4}-\frac{s_q\drift''}{2}}\psi_q=\widehat k_q^0 \psi_q.\]
  Therefore, by the Rayleigh formula, we have:
  \[\widehat k_q^0=\max_{\phi\in\mce}\pth{-\int_0^1(\phi')^2+\int_0^1\pth{R-\frac{(s_q\drift')^2}{4}-\frac{s_q\drift''}{2}}\phi^2},\]
  where
  \[\mce= \acc{\phi\in H^1_{loc}(\R)\, /\,\dabs{\phi}_{L^2(0,1)}=1,\,\text{$\phi$ is $h(1)-$periodic}}.\]
  This is analogous to Equation~(1.2) in~\cite{CheLou08}. Moreover, since all critical points of $D$ are non-degenerate, we obtain that all critical points of~$\drift$ are non-degenerate.
  Last, $s_q\to+\infty$ as $q\to-\infty$.
  Therefore, the proof of~\cite[Theorem 1]{CheLou08} remains valid in the setting of Equation~\eqref{eq:limit_adjoint}
  (the proof is simpler in our case since there is no need to deal with the boundary condition).
  This yields:
  \[\lim_{q\to-\infty}\widehat k_q^0=\max_{y\in\widehat{\mca}}R(y)=\max_{x\in\overline{\mca}}r(x).\]
The case $q\to +\infty$ is the same by replacing~$\drift'$ with~$-\drift'$.
\end{proof}

Before proving item~$(ii)$ of Theorem~\ref{thm:limits}, let us prove Proposition~\ref{ppn:exit_time}.
\begin{proof}[Proof of Proposition~\ref{ppn:exit_time}]
  We let $b\in\mcc^{0,1}(\R)$ be a function such that $b(x_0)>0$ for some $x_0\in[0,1]$.
  For $s\in\R$, we let $(X_t)_{t\geq 0}$ solve
  \[\de X_t=sb(X_t)\de t+\sqrt{2}\de W_t, \qquad X_0=1.\]
  We let $\mathbb{P}_1$ be the law of $(X_t)_{t\geq0}$.
  Last, we let $\tau:={\inf\acc{t\geq 0\ /\ X_t\leq 0}}$ be the exit time from $(0,+\infty)$ of $(X_t)_{t\geq0}$.
  For clarity, we omit the superindex $s$ in the notations, but the dependency in $s$ should not be forgotten.
  
  As $b(x_0)>0$, there exists an interval $I\subset(0,1)$ such that $b(x) \ge \delta >0$ for all $x\in I$  and some constant $\delta>0$. 
    We define the random variables
  \[\sigma_-:=\inf\acc{u>0 \ /\ X_u\leq\inf I},\qquad \sigma_+:=\sup\acc{u<\sigma_-\ /\ X_u\geq \sup I},\]if the infimum and the supremum are well-defined,  and $\sigma_{\pm}=+\infty$ otherwise.
  Since $X_0=1$, we have, conditionally on $\tau<+\infty$: the difference $\sigma_--\sigma_+$ is finite and is the duration of the first crossing of the interval $I$, without leaving it, by the process $(X_t)_{t\geq0}$; in particular $\sigma_--\sigma_+\leq\tau$.
  Now, we point out that for all $h\geq0$, we have (conditionally on $\tau<+\infty$):
  \begin{align*}
    X_{\sigma_-}
    &=X_{\sigma_--h}+s\int_0^hb(X_{\sigma_--h'})\de h'+\sqrt{2}(W_{\sigma_-}-W_{\sigma_--h}).
  \end{align*}
  Moreover, conditionally on $\tau<+\infty$ and $\sigma_--\sigma_+>h$, we have $X_u\in I$ for all $u\in[\sigma_--h,\sigma_-]$, so, owing to the definition of $I$,
  \begin{align*}
    X_{\sigma_-}&\geq X_{\sigma_--h}+sh\delta+\sqrt{2}(W_{\sigma_-}-W_{\sigma_--h}).
  \end{align*}
  Note that,  since $X_0=1$,
  we have almost surely: $\tau\geq \sigma_-$ and $\sigma_-\geq\sigma_+\geq 0$.
  Hence, for all $a>0$,
  \begin{align*}
    \mathbb{P}_1(\tau\leq a)
    &=\mathbb{P}_1(\tau\leq a,\,\sigma_--\sigma_+\leq a)\\
    &\leq\mathbb{P}_1(\sigma_-\leq a,\,\sigma_--\sigma_+\leq a)\\
    &=\mathbb{P}_1(\sigma_-\leq a,\,\exists h\in[0,\sigma_-],\, X_{\sigma_-}- X_{\sigma_--h}\leq-\abs{I})\\
    &  { \leq } \mathbb{P}_1(\exists\sigma\in[0,a],\,\exists h\in[0,\sigma],\, X_{\sigma}- X_{\sigma-h}\leq-\abs{I}).
  \end{align*}
  Using the above estimate for $X_{\sigma_-}$, this gives:
  \begin{align*}
    \mathbb{P}_1(\tau\leq a)
    &\leq\mathbb{P}_1(\exists\sigma\in[0,a],\,\exists h\in[0,\sigma],\, \sqrt{2}(W_{\sigma}-W_{\sigma-h})\leq -sh\delta-\abs{I}),
  \end{align*}
  which we rewrite:
  \begin{align*}
    \mathbb{P}_1(\tau\leq a)
    &\leq\mathbb{P}_1\pth{\exists\sigma\in[0,a],\,\exists h\in[0,\sigma],\, \frac{\abs{W_{\sigma}-W_{\sigma-h}}}{h^{1/4}}\geq \frac{1}{\sqrt{2}}\pth{sh^{3/4}\delta+h^{-1/4}\abs{I}}}.
  \end{align*}
  By virtue of the regularity of Brownian motion, there exists a random constant~$C$,
  whose law is independent of~$s$,
  such that almost surely,
  \[\sup_{\sigma \in (0,\,a),\ h\in(0,\,\sigma)}\frac{\abs{W_{\sigma}-W_{\sigma-h}}}{h^{1/4}}\leq C.\]
  Hence
  \begin{align*}
    \mathbb{P}_1(\tau\leq a)
    &\leq\mathbb{P}_1\pth{\exists h\in[0,a],\, C\geq\frac{1}{\sqrt{2}}\pth{sh^{3/4}\delta+h^{-1/4}\abs{I}}}.
  \end{align*}
  But, as $s\to+\infty$, we have
  \[\inf_{h\in[0,a]}\pth{sh^{3/4}\delta+h^{-1/4}\abs{I}}\to+\infty.\]
  Thus $\mathbb{P}_1(\tau\leq a)\to 0$ as $s\to+\infty$.

\end{proof}

\begin{proof}[Proof of Theorem~\ref{thm:limits}, item~$(ii)$] 
  Let us show that the spreading speed $c^*_q[r;D]$ converges to~$0$ as $q\to\pm\infty$. 
  Since the KPP assumption~\eqref{f1} holds,
  the spreading speed is linearly determined;
  thus,  $c^*_q[r;D]$ is also the spreading speed of the level sets of the solution~$v$ to the following linear equation:
  \[\dr_tv=\mcl_q^0v=\dr_{xx}(D(x)v)-(1-q)\dr_x(D'v)+rv, \qquad v(0,x)=1_{\acc{x\leq0}}.\]
  Define $h(x):=\int_0^x\de y/\sqrt{D(y)}$ and let
  \[V(t,y):=\sqrt{D(h^{-1}(y))}\,v(t,h^{-1}(y)).\]
  Using Lemma~\ref{lem:space_deformation} and the computations of the proof of Proposition~\ref{ppn:equiv_unif_laplacian}, we have:
  \[\dr_tV=\dr_{yy}V-s_q\dr_y(\drift'V)+RV,\]
  with $s_q:=1/2-q$, $\drift(y):=\ln(D(h^{-1}(y)))$ and $R(y):=r(h^{-1}(y))$.
    Define 
    \[c[V]:=\limsup_{t\to+\infty}\frac{1}{t}\sup\acc{x\in\R \ /\ V(t,x)\geq 1/2}.\]
    Then, since $V(t,y)=\sqrt{D(h^{-1}(y))}\,v(t,h^{-1}(y))$ and $\sqrt{D}$ is bounded, and bounded from below by a positive constant, we have: $c[V]=h(1)\,c^*_q[r;D]$.
    Therefore,
proving that $c^*_q[r;D]$ converges to~$0$ as $q\to\pm\infty$ is equivalent to proving that~$c[V]$ converges to~$0$ as $q\to\pm\infty$.

    Let us now consider the adjoint problem:
    \[\dr_tV^*=\dr_{yy}V^*+s_q\drift'\dr_yV^*+RV^*.\]
    By the standard Freidlin-Gärtner formula, the spreading speed $c[V^*]$ of the level set~$\frac{1}{2}$ of~$V^*$ (defined as~$c[V]$ but with~$V^*$ instead of~$V$) is 
    \[c[V^*]=\inf_{\lambda>0}\frac{\widehat k_q^{\lambda}[V^*]}{\lambda},\]
    where $\widehat k_q^{\lambda}[V^*]$ is the principal eigenvalue of the operator
    \[\widehat\mcl_q^{\lambda}[V^*]\,:\,\Phi\mapsto\dr_{yy}\Phi+(s_q\drift'-2\lambda)\dr_y\Phi+(R-\lambda s_q\drift'+\lambda^2)\Phi.\]
    We point out that $\widehat\mcl_q^{\lambda}[V^*]=(\widehat\mcl_q^{-\lambda})^*$ (where $\widehat\mcl_q^{-\lambda}$ is defined in Proposition~\ref{ppn:equiv_unif_laplacian}),
    so $\widehat k_q^{\lambda}[V^*]=\widehat k_q^{-\lambda}$. 
    Owing to Proposition~\ref{prop:left_right} (which can also be applied with~$\widehat k_q^{\lambda}$ instead of~$k_q^{\lambda}$), we have: 
    $\widehat k_q^{-\lambda}=\widehat k_q^{\lambda}$,
    so $\widehat k_q^{\lambda}[V^*]=k_q^{\lambda}$.
    We conclude that $c[V]=c[V^*]$.

    Hence, we wish to prove that~$c[V^*]$ converges to~$0$ as $q\to\pm\infty$.
    To this aim, we will prove that
    for all $\gamma>0$, for~$\abs{q}$ large enough, we have: $V^*(t,\gamma t)\to 0$ as $t\to+\infty$.
  For the remaining of the proof, we fix $\gamma>0$.
  For conciseness, we note $s=s_q$ and consider $q\to-\infty$, so $s\to+\infty$.

\medskip

  First, we estimate $V^*(t,\gamma t)$.
  By the Feynman-Kac formula~\eqref{eq:feynman_kac_formula}, we have: 
  \begin{equation*}
  V^*(t,y)=\mathbb{E}_y\cro{1_{\acc{X_t\leq 0}}\exp\pth{\int_0^tR(X_{t-s})\de s}},
  \end{equation*}
  where  $\mathbb{E}_y$ is the expectation corresponding to the probability $\mathbb{P}_y$, which is the law of a solution $(X_t)_{t\geq 0}$ to 
  \[\de X_t = s\drift'(X_t)\de t+\sqrt{2}\de W_t,\qquad X_0=y,\]
  for a standard Brownian motion $(W_t)_{t\geq0}$
  (we do not make it explicit in the notations that~$V^*$ and the law of~$X$ depend on~$s$, but this should not be forgotten).
  Letting $r_m:=\max r$, we obtain:
  \begin{equation}\label{eq:upper_bound_fkac}
  V^*(t,y)\leq e^{r_mt}\,\mathbb{P}_y(X_t\leq 0).
  \end{equation}
  We will prove that for all $\gamma>0$, for~$s$ large enough,  $e^{r_mt}\mathbb{P}_{\gamma t}(X_t\leq0)\to0$ as $t\to+\infty$.
  Using~\eqref{eq:upper_bound_fkac}, we will then be able to conclude.

  \medskip
  
  We define, for $i\in\Z$, the random variable
  \[\tau_i:=\inf\acc{u\geq0 \ /\ X_u=i}\] 
   if the infimum is well-defined, and $\tau_i:=+\infty$ otherwise.
  For $0\leq i\leq \lfloor\gamma t\rfloor-1$, when $X_0=\gamma t>1$  and conditionally on $X_t\leq 0$, the difference $\tau_i-\tau_{i+1}$ is positive
  almost surely and corresponds to the
  time spent by the particle~$X_t$ to cross the interval $[i,i+1]$
  for the first time (from right to left).
  We point out that, using the Markov inequality, we have:
  \[\mathbb{P}_{\gamma t}\pth{X_t\leq0}
  { \leq } 
  \mathbb{P}_{\gamma t}\pth{\tau_0\leq t}\leq e^t\mathbb{E}_{\gamma t}[e^{-\tau_0}].
  \]
  Moreover, the $\tau_i-\tau_{i+1}$ are independent and have the same law.
  Hence, for $\gamma t>2$,
  \begin{align*}
    \mathbb{E}_{\gamma t}[e^{-\tau_0}]
    &\leq\mathbb{E}_{\gamma t}\cro{\exp\pth{-\sum_{i=0}^{\lfloor\gamma t\rfloor-1}(\tau_i-\tau_{i+1})}}
    =\pth{\mathbb{E}_{\gamma t}\cro{e^{-(\tau_0-\tau_1)}}}^{\lfloor\gamma t\rfloor}\\
    &\leq\pth{\mathbb{E}_{\gamma t}\cro{e^{-(\tau_0-\tau_1)}}}^{\gamma t/2}\\
    &=\pth{\mathbb{E}_{1}\cro{e^{-(\tau_0-\tau_1)}}}^{\gamma t/2}.
  \end{align*}
  On the last line, contrarily to the previous lines, the expectation is taken for a process starting from~$1$ (as indicated by the index~\enquote{1} on the expectation symbol).
  The above computations show that:
  \begin{equation}\label{eq:compare_exp_moment}
    e^{r_mt}\,\mathbb{P}_{\gamma t}\pth{X_t\leq0} 
  \leq\exp\cro{t\pth{r_m+1+\frac{\gamma}{2}\ln\pth{\mathbb{E}_{1}\cro{e^{-(\tau_0-\tau_1)}}}}}.
  \end{equation}
  Our goal is now to show that, for~$s$ large enough,
  we have $r_m+1+\frac{\gamma}{2}\ln\pth{\mathbb{E}_1\cro{e^{-(\tau_0-\tau_1)}}}<0$.
  We will prove the stronger result that $\mathbb{E}_1\cro{e^{-(\tau_0-\tau_1)}}$ converges to~$0$ as $s\to+\infty$.
  We point out that $\mathbb{P}_1(\tau_1=0)=1$.
  Hence we may write:
  \[\mathbb{E}_1\cro{e^{-(\tau_0-\tau_1)}}
  =\mathbb{E}_1\cro{e^{-\tau_0}}
  =\int_0^1\mathbb{P}_1(e^{-\tau_0}\geq u)\de u
  =\int_0^1\mathbb{P}_1(\tau_0\leq -\ln(u))\de u.\]
  Since $D$ is periodic and nonconstant, $\drift'$ changes sign, so in particular there exists $x_0\in[0,1]$ such that $\drift'(x_0)>0$.
  Therefore, we may apply~Proposition~\ref{ppn:exit_time} with $b=\drift'$. We get: for all $a>0$,
  \begin{equation*}
    \mathbb{P}_1(\tau_0\leq a)\to0 \qquad\text{as}\qquad s\to+\infty.
  \end{equation*}
  Therefore, $\mathbb{E}_1\cro{e^{-(\tau_0-\tau_1)}}\to0$ as $s\to+\infty$.
  Owing to~\eqref{eq:compare_exp_moment}, we obtain that for~$s$ large enough,
  $e^{r_mt}\,\mathbb{P}_{\gamma t}\pth{X_t\leq0}\to 0$ as $t\to+\infty$.
  Owing to~\eqref{eq:upper_bound_fkac}, therefore, we have:
  \[V^*(t,\gamma t)\to 0\qquad\text{as $t\to+\infty$}.\]
  This implies that $c[V]=c[V^*]\leq\gamma$ for $s$ large enough, i.e. for $-q$ large enough.
  Since~$\gamma$ is arbitrary, we get: $c[V]\to 0$ as $q\to-\infty$, and thus $c^*_q[r;D]\to 0$ as $q\to-\infty$.

  \medskip
  
  The same argument holds for $q\to+\infty$, by considering~$-\drift'$ instead of~$\drift'$.
\end{proof}

We now show the monotonicity properties of Theorem~\ref{thm:monotonicity}.

\begin{proof}[Proof of Theorem~\ref{thm:monotonicity}, item~$(i)$]
By definition of~$\kappa(q)$, there exists $\psi>0$ such that
\[\dr_{x}(D\dr_x\psi)+q\dr_x(D'\psi)+(r-\kappa(q))\psi=0.\]
Recall that $r$ is constant. Integrating the equation on $[0,1]$ gives, using the periodicity:
\[(r-\kappa(q))\int_0^1\psi=0.\]
Since $\psi>0$, we have $r=\kappa(q)$.

\end{proof}

\begin{proof}[Proof of Theorem \ref{thm:monotonicity}, item~$(ii)$]
  We let $r$ and $D$ satisfy the assumptions of item~$(ii)$, namely: 
  $D\in\mcc^{3}(\R)$ and $r\in\mcc^4(\R)$ are $1-$periodic and even on $\R$, and monotonic on $[0,1/2]$; 
     $r'\neq 0$ on $(0,1/2)$; $r''(0)\neq 0$ and $r''(1/2)\neq 0$. 

  For convenience, we assume that $r$ and $D$ are nonincreasing on $[0,1/2]$
  (the other cases are proved in an analogous way).
  Therefore, we have:
  \[r'(x)<0\text{ on $(0,1/2)$}, \qquad r'(x)>0\text{ on $(1/2,1)$}, \qquad r'(0)=r'(1/2)=0.\]
  Further, we have $r''(0)<0$ and $r''(1/2)>0$.
  
  Let us prove that $q\mapsto \kappa(q)=k^0_q[r;D]$ is nonincreasing.
  This is equivalent to proving that $q\mapsto\widehat{k}_q^0$ is nonincreasing, where~$\widehat{k}_q^0$ is defined in Proposition~\ref{ppn:equiv_unif_laplacian} as the principal eigenvalue of the operator
  \[\widehat{\mcl}_q^0\,:\,\Phi\mapsto\dr_{yy}\Phi-s_q\dr_y(\drift'\Phi)+R\Phi,\]
  with $\drift(y)=\ln(D(h^{-1}(y))$, $R(y)=r(h^{-1}(y))$ and $s_q=\frac{1}{2}-q$, where~$h$ is defined before Proposition~\ref{ppn:equiv_unif_laplacian}.
  To avoid confusions, we rescale space by a constant in such a way that $h(0)=0$ and $h(1)=1$; since $D$ is even and $1-$periodic, this implies also that $h(1/2)=1/2$. Then~$R$ and~$\drift$ are still even and $1-$periodic.

  We let $q^1>q^2$ and denote $s^1:=s_{q^1}$ and $s^2:=s_{q^2}$, so $s^1<s^2$.
  We define two processes $X^1$ and $X^2$ by:
  \begin{align*}
    \de X_t^1 &= s^1\drift'(X^1_t)\de t+\sqrt{2}\de W^1_t,\esp X_0^1=0,\\
    \de X_t^2 &= s^2\drift'(X^2_t)\de t+\sqrt{2}\de W^2_t,\esp X_0^2=0,
  \end{align*}
  where $(W^1_t)_{t\geq 0}$ and $(W^2_t)_{t\geq 0}$ are two standard Brownian motions. In the following, we will couple the processes $(R(X^i_t))_{t\geq0}$.
First, let us establish a connection between the processes $(R(X^i_t))_{t\geq 0}$ and the principal eigenvalues~$\widehat{k}_{q_i}^0$.

  \paragraph{Step 1. Use the Feynman-Kac formula.}
We now establish a connection between the processes $(R(X^i_t))_{t\geq 0}$ and the principal eigenvalues~$\widehat{k}_{q_i}^0$
by using the Feynman-Kac formula~\eqref{eq:feynman_kac_formula}. 
We work with the adjoint problem.
We point out that the principal eigenvalue of $(\widehat\mcl_q^0)^*$ is equal to the principal eigenvalue of $\widehat\mcl_q^0$, which is precisely $\widehat k_q^0$. Therefore,
for $i=1,2$, we have
\[\cro{(\widehat\mcl_{q^i}^0)^*-\widehat k_{q^i}^0}\varphi^*_i=\dr_{xx}\varphi^*_i+s^i\drift'\dr_x\varphi^*_i+(R-\widehat{k}_{q^i}^0)\varphi^*_i=0,\]
  where $\varphi^*_i$ is the principal eigenfunction of $(\widehat\mcl_{q^i}^0)^*$ such that $\varphi^*_i(0)=1$.
  By the Feynman-Kac formula~\eqref{eq:feynman_kac_formula} (applied to the stationary function $\overline{u}(t,x)=\varphi^*_i(x)$),
  therefore, we have, for all $t\geq0$,
  \[
  \varphi^*_i(x)=\mathbb{E}_x\cro{\varphi^*_i(X^i_t)\exp\pth{\int_0^t(R(X^i_{t-s})-\widehat k_{q^i}^0)\de s}}.
  \]
  Since $0<\varphi^*_i<+\infty$, there is a constant $C>1$ such that, for $i=1,2$, for all $t\geq0$,
  \begin{align*}
    &\frac{1}{C}\exp\pth{-\widehat k_{q^i}^0t}\mathbb{E}_x\cro{\exp\pth{\int_0^tR(X^i_{t-s})\de s}}\\
    &\qquad\leq
    \varphi^*_i(x)
    \leq C\exp\pth{-\widehat k_{q^i}^0t}\mathbb{E}_x\cro{\exp\pth{\int_0^tR(X^i_{t-s})\de s}}.
  \end{align*}
  Therefore, for $i=1,2$,
  \begin{align*}
    0&<
    \liminf_{t\to+\infty}\exp\pth{-\widehat k_{q^i}^0t}\mathbb{E}_x\cro{\exp\pth{\int_0^tR(X^i_{t-s})\de s}}\\
    &\leq \limsup_{t\to+\infty}\exp\pth{-\widehat k_{q^i}^0t}\mathbb{E}_x\cro{\exp\pth{\int_0^tR(X^i_{t-s})\de s}}<+\infty.
  \end{align*}
  Hence, to prove that $\widehat k_{q^1}^0\leq \widehat k_{q^2}^0$, it is sufficient to prove that
  \[\mathbb{E}_x\cro{\exp\pth{\int_0^tR(X^1_{t-s})\de s}}\leq \mathbb{E}_x\cro{\exp\pth{\int_0^tR(X^2_{t-s})\de s}}.\]
  The goal of the following step is to prove this inequality.
  We will construct a  process $(Y^2_t)_{t\geq 0}$ such that:
  $(Y_t^2)_{t\geq0}=(R(X^2_t))_{t\geq 0}$ in law, and such that almost surely, for all $t\geq0$, $R(X_t^1)\leq Y_t^2$.

  \begin{figure}

    \begin{center}
      \includegraphics[width=7cm]{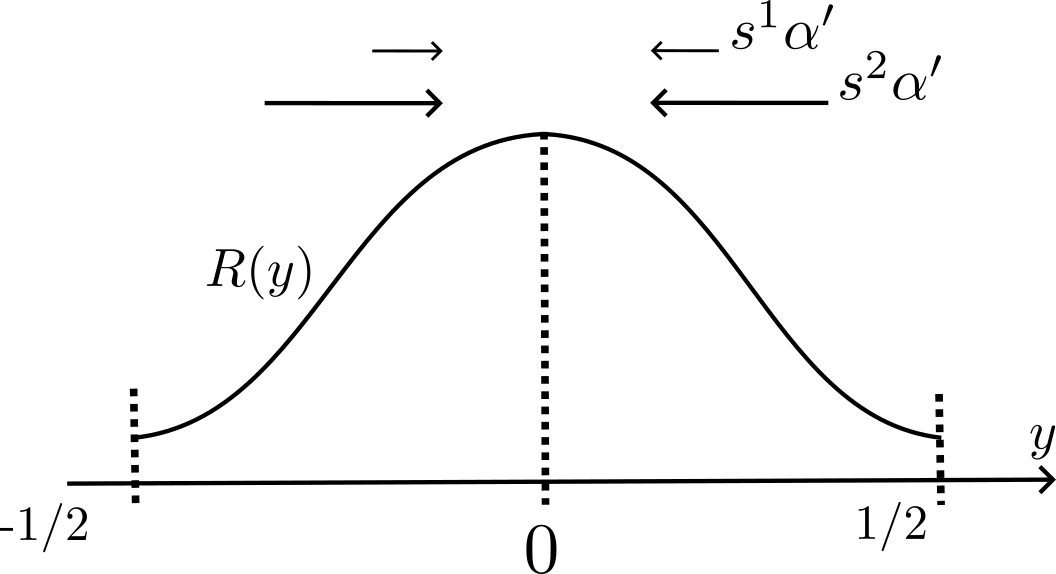}      
    \end{center}
    
    \caption{The processes $X^1$ and $X^2$ are submitted to a drift bringing them to the center, where $R$ is larger. The process $X^2$ is submitted to a stronger drift than $X^1$, which is what we want to exploit in the proof.}
    
  \end{figure}

  \paragraph{Step 2. Write an equation for the process $(R(X^1_t))_{t\geq 0}$.}
  For $x\in\R$, we let $\acc{x}$ be analogous to the fractional part of~$x$,
  but belonging to $(-1/2,1/2]$ instead of $[0,1)$, namely, $\acc{x}$ is the unique real number such that:
    \[\acc{x}\in(-1/2,1/2],\qquad x-\acc{x}\in\Z.\]
  We define the sign function $sgn$ by: $sgn(x)=1$ if $x\geq 0$ and $sgn(x)=-1$ if $x<0$.
  By our symmetry assumptions on $r$ and $D$, we have $r(x)=r(\abs{\acc{x}})$, so
  \[R(x)=R(\abs{\acc{x}}), \qquad R'(x)=sgn(\acc{x})R'(\abs{\acc{x}}),\]
  and
  $D(x)=D(\abs{\acc{x}})$, 
  so
  \[\drift'(x)=sgn(\acc{x})\drift'(\abs{\acc{x}}).\]
  Now, we set $Y^1_t:=R(X^1_t)$. Applying Itô's formula, we have:
  \begin{align*}
    \de Y^1_t=\cro{s^1R'(X^1_t)\drift'(X^1_t)+R''(X^1_t)}\de t+\sqrt{2}\,R'(X^1_t)\de W^1_t.
  \end{align*}
  By our above remarks, we get:
  \begin{align*}
    \de Y^1_t=\cro{s^1R'(\abs{\acc{X^1_t}})\drift'(\abs{\acc{X^1_t}})+R''(\abs{\acc{X^1_t}})}\de t+\sqrt{2}\,sgn(\acc{X^1_t})R'(\abs{\acc{X^1_t}})\de W^1_t.
  \end{align*}
  By our monotonicity assumptions on $R$, the function $x\mapsto R(x)$ is a bijection from $[0,1/2]$ to $[R(1/2),R(0)]$.
  Therefore, since $x\mapsto\abs{\acc{x}}$ takes its values in $[0,1/2]$, we can write:
  \[\abs{\acc{X^1_t}}=R^{-1}(Y^1_t).\]
  We obtain that $Y^1_t$ satisfies
  \begin{equation}\label{eq:sde_y1}
    \de Y^1_t=\cro{s^1b(Y^1_t)+B(Y^1_t)}\de t+\sigma(Y^1_t)\de \overline{W}_t,
  \end{equation}
  where
  \[\overline{W}_t:=\int_0^tsgn(\acc{X^1_t})\de W^1_t\]
  and, for $y\in[R(1/2),R(0)]$,
  \[b(y):=R'(R^{-1}(y))\drift'(R^{-1}(y)),\qquad B(y):=R''(R^{-1}(y)),\qquad \sigma(y):=\sqrt{2}\,R'(R^{-1}(y)).\]
  Last, $(\overline{W}_t)_{t\geq0}$ is a martingale with increasing process
  \[\scal{\overline{W}}_t=\int_0^tsgn(\acc{X^1_t})^2\de t=t,\]
  so $(\overline{W}_t)_{t\geq0}$ is a Brownian motion.

  \paragraph{Step 3. Regularity of the coefficients of~\eqref{eq:sde_y1}.}
  We prove that $b$, $B$ and $\sigma^2$ are Lipschitz.
  We will see in Step~4 that these conditions ensure the existence and uniqueness of a solution to~\eqref{eq:sde_y1}.

  First, $R(y)=r(h^{-1}(y))$, so $R\in\mcc^4(\R)$, and
  \[R'(y)=\frac{r'(h^{-1}(y))}{h'(h^{-1}(y))}=\sqrt{D(h^{-1}(y))}\,r'(h^{-1}(y))\]
  is nonzero on $(0,1/2)$.
  Moreover,
  \[R''(y)=\frac{1}{h'(h^{-1}(y))}\pth{\frac{D'(h^{-1}(y))\,r'(h^{-1}(y))}{2\sqrt{D(h^{-1}(y))}}+\sqrt{D(h^{-1}(y))}\,r''(h^{-1}(y))}.\]
  Since $D'(0)=D'(1/2)=0$, we have, at $y=0$ and at $y=1/2$:
  \[R''(y)=\frac{1}{h'(h^{-1}(y))}\,\sqrt{D(h^{-1}(y))}\,r''(h^{-1}(y)),\]
  which, by our assumptions on~$r$, is nonzero.
  Further, $R$ shares the same monotonicity properties as~$r$.
  Hence, properties $(a)$, $(b)$ and $(c)$ of the statement also hold with~$R$ instead of~$r$.
  
  Second, we have:
  \[b'(y)=\frac{R''(R^{-1}(y))}{R'(R^{-1}(y))}\drift'(R^{-1}(y))+\drift''(R^{-1}(y)).\]
  Since~$R$ and~$\drift$ are of class $\mcc^2$, $R''$ and~$\drift''$ are bounded.
  Further, $R'(0)=R'(1/2)=0$ and $R'$ is nonzero on $(0,1/2)$.
  Since $R''(0)\neq 0$ and $\drift'(0)=0$, we have, for $x\to 0$:
  \[\abs{\frac{R''(x)}{R'(x)}\drift'(x)}\leq\frac{R''(0)}{R''(0)x+O(x^2)}\ O(x),\]
  which is bounded.
  Hence $b'$ is bounded near $0$.
  Likewise, $b'$ is bounded near $1/2$. Therefore, $b'$ is bounded on $[0,1/2]$ so $b$ is Lipschitz.

  Third, we have
  \[B'(y)=\frac{R'''(R^{-1}(y))}{R'(R^{-1}(y))}.\]
  The function $R'''$ is bounded on $[0,1/2]$ and $R'$ is nonzero on $(0,1/2)$.
  Moreover, owing to $R\in\mcc^4(\R)$, we have, as $x\to0$, $R'''(x)=O(x)$ and $R'(x)=R''(0)x+O(x^2)$.
  Since $R''(0)\neq0$, $R'''/R'$ is bounded near~$0$.
  Likewise, $R'''/R'$ is bounded near~$1/2$.
  This implies that $B'$ is bounded on  $[0,1/2]$, so $B$ is Lipschitz.

  Last, we have:
  \begin{align*}
  (\sigma^2)'(y)&=2\sigma(y)\sigma'(y)=\frac{4R'(R^{-1}(y))R''(R^{-1}(y))}{R'(R^{-1}(y))}=4R''(R^{-1}(y)),  
  \end{align*}
  which is bounded. Therefore, $\sigma^2$ is Lipschitz.
  
  \paragraph{Step 4. Conclusion.}
  By Step 3, $s^1b+B$ and $\sigma^2$ are Lipschitz on $[R(1/2),R(0)]$.
  We extend the functions $b$, $B$ and $\sigma^2$ to Lipschitz functions on $\R$, with $b\geq 0$.
  Then, using~\cite[Theorem~IX.3.5.$(ii)$]{RevYor99} (with $\rho(x):=\abs{x}$), we conclude that pathwise uniqueness holds for Equation~\eqref{eq:sde_y1}.
  
  Now, let us consider the following equation:
  \begin{equation}\label{eq:sde_y2}
    \de Y^2_t=\cro{s^2b(Y^2_t)+B(Y^2_t)}\de t+\sigma(Y^2_t)\de W_t.
  \end{equation}
  As for~\eqref{eq:sde_y1}, pathwise uniqueness holds for~\eqref{eq:sde_y2}.
  Further, doing the same computations as in Step~2, there exists a Brownian motion $(\overline{W}')_{t\geq0}$ (constructed as $(\overline{W}_t)_{t\geq0}$ with $(W^2)_{t\geq0}$ instead of $(W^1)_{t\geq0}$) such that $(R(X^2_t))_{t\geq 0}$ solves~\eqref{eq:sde_y2} with $W=\overline{W}'$.
  
  Hence, Equation~\eqref{eq:sde_y2} has a solution; owing to pathwise uniqueness,
  it follows from~\cite{RevYor99} (see Theorem IX.1.7 and the associated Remark~2)
  that this solution is strong, and, therefore, that there exists a (unique) solution $(Y^2_t)_{t\geq0}$ to~\eqref{eq:sde_y2} carried by the Brownian motion $W=\overline{W}$ defined in Step~2.
  By uniqueness in law, we have:
  \[(Y^2_{t})_{t\geq0}=(R(X^2_{t}))_{t\geq 0}\qquad\text{in law.}\]
  We point out that the processes $(Y^1_t)_{t\geq0}$ and $(Y^2_t)_{t\geq0}$ are carried by the same Brownian motion, which allows us to compare them.
  We have indeed, on $\R$,
  \[s^1b+B\leq s^2b+B.\]
  Hence, the comparison result stated in~\cite[Theorem~IX.3.7]{RevYor99}, implies that almost surely,
  \[\forall t\geq 0,\qquad Y^1_t\leq Y^2_t.\]
  Therefore,
  \begin{equation*}
    \mathbb{E}\cro{\exp\pth{\int_0^tY^1_t\de y}}\leq\mathbb{E}\cro{\exp\pth{\int_0^tY^2_t\de y}}.
  \end{equation*}
Recalling Step 1, this concludes the proof.

       \end{proof}

\begin{proof}[Proof of Theorem \ref{thm:monotonicity}, item~$(iii)$]
  Let $D$ be nonconstant and, on $[0,1)$, have exactly one local minimum at $\underline{x}\in[0,1)$ and one local maximum at $\overline{x}\in[0,1)$.
        We let $\eps>0$ and define
        \[r_{\eps}(x):=
\left\{
\begin{aligned}
  &1&\esp&x\in(\underline{x}-\eps,\underline{x}+\eps)\cup(\overline{x}-\eps,\overline{x}+\eps),\\
  &0&\esp&\text{otherwise}.
\end{aligned}
\right.
\]
By Theorem \ref{thm:limits}, we have for all $\eps>0$,
\[\lim_{q\to-\infty}k_q^0[r_{\eps},D]=\lim_{q\to+\infty}k_q^0[r_{\eps},D]=1.\]
Thus, for all $\eps>0$, $q\mapsto k_q^0[r_{\eps},D]$ cannot be monotonic unless it is constant.
But $r_{\eps}\to 0$ in $L^2([0,1])$ as $\eps\to0$.
Therefore, for all $q\in\R$, $k_q^0[r_{\eps},D]\to 0$ as $\eps\to 0$.
Hence, for $\eps>0$ small enough, $q\mapsto k_q^0[r_{\eps},D]$ is not monotonic.

\end{proof}

\begin{proof}[Proof of Theorem \ref{thm:monotonicity}, item~$(iv)$]
  By Theorem \ref{thm:limits}, we have:
  \[\lim_{q\to-\infty}c_q^*[r;D]=\lim_{q\to+\infty}c_q^*[r;D]=0.\]
  On the other hand, for $q\in\R$ such that $k^0_q[r;D]>0$, we have $c_q^*[r;D]>0$. This proves that $q\mapsto c_q^*[r;D]$ cannot be monotonic.
\end{proof}

\section*{Acknowledgments} 
N.B. and L.R. were supported by the ANR project ReaCh, {ANR-23-CE40-0023-01}.
N.B. was supported by the Chaire Modélisation Mathématique et Biodiversité (École Polytechnique, Muséum national d’Histoire naturelle, Fondation de l’École Polytechnique, VEOLIA Environnement). Y.-J.K. was supported by the National Research Foundation of Korea (RS-2024-00347311). In-person collaborations between N.B., Y.-J.K., and L.R. were partially funded by the International Research Network ReaDiNet and the INRAE Network MEDIA.

\section*{Data Availability}  
The code used to perform the numerical simulations presented in this manuscript is publicly available at \href{https://doi.org/10.17605/OSF.IO/GDQVP}{DOI: 10.17605/OSF.IO/GDQVP}.

\bibliographystyle{abbrv}

\end{document}